\numberwithin{equation}{section}
\theoremstyle{plain}
\newtheorem{lem}{Lemma}
\newtheorem{cor}{Corollary}
\newtheorem{prp}{Proposition}
\newtheorem{thm}{Theorem}
\theoremstyle{definition}
\newtheorem{rmk}{Remark}
\newtheorem{exm}{Example}
\newtheorem{dfn}{Definition}
\newtheorem{examplex}{Example}
\newenvironment{examplecont}[2]
  {\begin{examplex}}
  {\end{examplex}}
\newcommand{\stitle}[1]{}
\newcommand{\sdescription}[1]{}
\title{Measuring Evidence against Exchangeability and Group Invariance with E-values}
\author{Nick W. Koning\\Econometric Institute, Erasmus University Rotterdam\\n.w.koning@ese.eur.nl}
\date{}
\begin{document}
\maketitle

\begin{abstract}
	We study e-values for quantifying evidence against exchangeability and general invariance of a random variable under a compact group.
	We start by characterizing such e-values, and explaining how they nest traditional group invariance tests as a special case.
	We show they can be easily designed for an arbitrary test statistic, and computed through Monte Carlo sampling.
	We prove a result that characterizes optimal e-values for group invariance against optimality targets that satisfy a mild orbit-wise decomposition property.
	We apply this to design expected-utility-optimal e-values for group invariance, which include both Neyman--Pearson-optimal tests and log-optimal e-values.
	Moreover, we generalize the notion of rank- and sign-based testing to compact groups, by using a representative inversion kernel.
	In addition, we characterize e-processes for group invariance for arbitrary filtrations, and provide tools to construct them.
	We also describe test martingales under a natural filtration, which are simpler to construct.
	Peeking beyond compact groups, we encounter e-values and e-processes based on ergodic theorems.
	These nest e-processes based on de Finetti's theorem for testing exchangeability.
\end{abstract}

\paragraph{Keywords} permutation test, e-values, sequential testing, group invariance test, anytime valid inference, post-hoc valid inference

	\section{Introduction}\label{sec:intro}
		Testing group invariance is an old and fundamental problem in hypothesis testing.
		It covers many non-parametric tests, including permutation tests, conformal inference, various popular multiple testing methods, many causal inference methods, and even the $t$-test.
		Tests for group invariance are attractive and widely used, as invariance properties under a null hypothesis are often easy to defend.
		Rejecting this invariance then also rejects the null hypothesis of interest.
		
		Up to the present, only a limited class of group invariance tests has been explored, of a form inspired by the traditional Neyman--Pearson framework of testing. 
		The primary contribution of this manuscript is to go beyond this traditional framework by measuring evidence against group invariance with e-values \citep{shafer2021testing,vovk2021values,howard2021time,ramdas2023gametheoretic,grunwald2023safe, koning2024continuous}.
		
		Before we detail our contributions, we first briefly discuss the traditional permutation test, which serves as a prototypical example of a traditional group invariance test.
		We follow this with a short primer on e-values.
		
		\subsection{Traditional permutation test}
			Consider the random variable $X^n = (X_1, \dots, X_n)$, $n \geq 1$, and suppose we are interested in testing whether it is exchangeable:
			\begin{align*}
				H_0 : X^n \text{ is exchangeable}.
			\end{align*}
			Here, exchangeability means that $X^n$ is equal in distribution to every permutation $PX^n$ of its elements.
			As an example, $X^n$ is exchangeable if its components $X_1, \dots, X_n$ are i.i.d., but there also exist non-i.i.d. exchangeable distributions.
			
			Given some test statistic $T$, a traditional `permutation p-value' is given by
			\begin{align*}
				p_n
					= \mathbb{P}_{\overline{P}_n}\left(T(\overline{P}_nX^n) \geq T(X^n)\right),
			\end{align*}
			where $\overline{P}_n \sim \textnormal{Unif}(\mathfrak{P}_n)$ is uniformly distributed on the permutations $\mathfrak{P}_n$ of $n$ elements.
			This p-value can be understood as the proportion of test statistics calculated from the rearranged (`permuted') data that exceed or match the original test statistic.
			
			Instead of a p-value, we can equivalently formulate a traditional permutation test
			\begin{align*}
				\varepsilon_n^{(\alpha)}
					= \mathbb{I}\{p_n \leq \alpha\} / \alpha,
			\end{align*}
			where we follow \citet{koning2024continuous} in defining a level $\alpha$ test $\varepsilon_n^{(\alpha)}$ as $[0, 1/\alpha]$-valued, so that $\varepsilon_n^{(\alpha)} = 1/\alpha$ indicates a rejection at level $\alpha$ and $\varepsilon_n^{(\alpha)} = 0$ a non-rejection.
			
			Permutation tests and p-values are well-known to be valid in finite samples:
			\begin{align}\label{eq:type-1}
				\sup_{n, \alpha} \mathbb{E}^{\mathbb{P}}[\varepsilon_n^{(\alpha)}] 
					\equiv \sup_{n, \alpha} \mathbb{E}^{\mathbb{P}}[\mathbb{I}\{p_n \leq \alpha\} / \alpha] \leq 1,
			\end{align}
			for every exchangeable distribution $\mathbb{P}$, or, equivalently, $\mathbb{P}(p_n \leq \alpha) \leq \alpha$, for all $n$ and $\alpha$.
			In fact, this can even be made to hold with equality by breaking ties through randomization.
			
			Permutation tests are a special case of more general group invariance tests, which are obtained by simply replacing the group of permutations $\mathfrak{P}_n$ by some other compact group $\mathcal{G}$ that acts on our sample space, interpreting $\textnormal{Unif}(\mathcal{G})$ as the Haar measure. 
			In Section \ref{sec:background}, we cover the necessary background on compact groups and their actions on sample spaces.
			
		\subsection{Primer on e-values}
			Traditional group invariance tests are \emph{binary} by construction: such a test either rejects the hypothesis at level $\alpha$ or not: $\varepsilon^{(\alpha)} = 1/\alpha$ or $\varepsilon^{(\alpha)} = 0$.
			Recently, there has been much interest in moving away from such binary tests towards `fuzzy' tests that exploit the entire interval $[0, 1/\alpha]$ or even $[0, \infty]$ \citep{koning2024continuous}.
			These tests have been popularized under the name \emph{e-values}.
			Such e-values are to be used as a continuous measure of evidence against the hypothesis, where their value in $[0, 1/\alpha]$ or $[0, \infty]$ is \emph{directly} interpreted as evidence.
			The introduction of the e-value has led to a series of breakthroughs in sequential testing, multiple testing and even in single-hypothesis testing.
			
			For e-values bounded to $[0, 1/\alpha]$, the Neyman--Pearson lemma tells us that binary e-values (tests) automatically arise when maximizing the power $\mathbb{E}^{\mathbb{Q}}[\varepsilon]$ for simple null hypotheses, and also when testing group invariance \citep{lehmann1949theory}.
			To obtain non-binary e-values, we must consider other power-targets, such as $\mathbb{E}^{\mathbb{Q}}[\log \varepsilon]$.
			E-values that maximize this target have also been dubbed `log-optimal', `GRO' or `numeraire' \citep{shafer2021testing, grunwald2023safe, larsson2024numeraire}.
			While this log-target is most popular, we may also consider more general utilities \citep{koning2024continuous}.
			
			A sequential generalization of an e-value is an e-process $(\varepsilon_n)_{n \geq 1}$.
			Such an e-process is said to be \emph{anytime valid} if \eqref{eq:type-1} holds not just for data-independent $n$, but for stopping times:
			\begin{align*}
				\mathbb{E}^{\mathbb{P}}[\varepsilon_\tau] \leq 1,
			\end{align*}
			for every stopping time $\tau$, and every group invariant distribution $\mathbb{P}$.
			Such e-processes relieve us from pre-specifying a number of observations, and permit us to continuously monitor the data and current evidence, and stop whenever we desire.
			
			Another interpretation of the e-value may be found in its relationship to the p-value.
			Specifically, the reciprocal $\mathfrak{p} = 1/\varepsilon$ of an e-value $\varepsilon$ is a special kind of p-value that is valid under a much stronger Type-I error guarantee \citep{koning2023markov, grunwald2023beyond}, also called the post-hoc level Type-I error:
			\begin{align*}
				 	\mathbb{E} \left[\sup_{\alpha} \mathbb{I}\{\mathfrak{p} \leq \alpha\} / \alpha\right]
					\equiv \mathbb{E}\left[1/\mathfrak{p}\right]
					\leq 1.
			\end{align*}
			This is stronger than the traditional Type-I error \eqref{eq:type-1}, as the supremum over $\alpha$ is now inside the expectation which means that it is also valid when using data-dependent significance levels $\alpha$.
			Indeed, $\alpha$ may be chosen \emph{post-hoc}.
			
			The smallest data-dependent level at which we reject is the p-value $\mathfrak{p}$ itself.
			As a consequence, we may truly `reject at level $\mathfrak{p}$' and still retain a generalized Type-I error control for data-dependent significance levels \citep{koning2023markov}.		
			This is certainly not permitted for traditionally valid p-values, which only offer a guarantee when compared to an independently specified level \eqref{eq:type-1}.
			To distinguish these p-values from traditional p-values, they are sometimes referred to as \emph{post-hoc p-values}.
			Such a post-hoc p-value, and by extension the e-value, may therefore be viewed as a p-value that offers a generalized Type-I error guarantee, even when interpreted continuously.
						
		\subsection{First contribution: characterizing and computing e-values for group invariance}
			In Section \ref{sec:ph_gi}, we study the characterization of e-values for group invariance for a compact group $\mathcal{G}$.
			There, we show that $\varepsilon$ is a valid e-value for $\mathcal{G}$ invariance if and only if
			\begin{align*}
				\mathbb{E}_{\overline{G}}[\varepsilon(\overline{G} x)] \leq 1,
			\end{align*}
			where $\overline{G} \sim \textnormal{Unif}(\mathcal{G})$, for every $x$ in the sample space.
			That is, it needs to be valid under a uniform distribution, $\overline{G}x \sim \textnormal{Unif}(O_{x})$, on each `orbit' $O_{x} = \{Gx : G \in \mathcal{G}\}$.
			In fact, as the data $X$ identifies the orbit in which it falls, we find that the e-value only needs to be valid for the uniform distribution $\textnormal{Unif}(O_{X})$ conditionally on the orbit $O_{X}$ in which the data $X$ lands.
			
			We use this to show that an e-value is exactly valid if and only if the e-value is of the form 
			\begin{align*}
				\varepsilon_T(x) = \frac{T(x)}{\mathbb{E}_{\overline{G}}[T(\overline{G} x)]},
			\end{align*}
			for some non-negative and appropriately integrable test statistic $T$.
			We find e-values of this form can be computed easily by replacing the denominator with a Monte Carlo average over i.i.d. samples from $\overline{G}$.
			Indeed, we show such a `Monte Carlo e-value' is valid in expectation over the Monte Carlo draws.
			As a side contribution, we also derive a weaker condition under which traditional group invariance tests are valid.
		
		\subsection{Second contribution: optimal e-values for group invariance}
			In Section \ref{sec:optimal}, we consider optimal e-values for group invariance against an abstract power-target.
			Under a mild assumption that the target monotonically aggregates local optimality targets on orbits, we find that an e-value is optimal for group invariance if it is locally optimal on each orbit.
			In fact, such e-values are optimal uniformly in any monotone aggregation of the local orbit-wise targets.
			We apply this idea to derive expected-utility-optimal e-values for group invariance, which optimize $\mathbb{E}^{\mathbb{Q}}[U(\varepsilon)]$ for some alternative $\mathbb{Q}$ and utility function $U$.
			
			To present such optimal e-values, it is helpful to introduce the $\mathcal{G}$-average $\overline{\mathbb{Q}}$ of the alternative $\mathbb{Q}$, which may be constructed by averaging the $\mathbb{Q}$-probability mass of each event over group-transformations of the event.
			Under some regularity conditions on $U$, an expected-utility-optimal e-value is given by
			\begin{align*}
				\varepsilon^U = (U')^{-1}\left(\lambda^* \Big/ \frac{d\mathbb{Q}}{d\overline{\mathbb{Q}}}\right),
			\end{align*}
			for some orbit-dependent constant $\lambda^*$, where we assume $\mathbb{Q} \ll \overline{\mathbb{Q}}$ here for simplicity.
			
			Specializing this to log-utility $U : x \mapsto \log(x)$ yields $\varepsilon^{\textnormal{log}} = d\mathbb{Q} / d\overline{\mathbb{Q}}$.
			This simultaneously reveals that $\overline{\mathbb{Q}}$ may be viewed as the Reverse Information Projection (`RIPr') of $\mathbb{Q}$ onto the collection of $\mathcal{G}$ invariant probabilities under the KL-divergence \citep{grunwald2023safe, lardy2024reverse, larsson2024numeraire}.
			
			The `Neyman--Pearson utility function' $U_\alpha : x \mapsto x \wedge 1/\alpha$ \citep{koning2024continuous} coincides with the classical notion of `power', for which an optimal e-value is $\varepsilon_\alpha^{\textnormal{NP}} = \frac{1}{\alpha}\mathbb{I}\left\{\frac{d\mathbb{Q}}{d\overline{\mathbb{Q}}} > c_\alpha\right\} 
					+ \frac{k}{\alpha}\mathbb{I}\left\{\frac{d\mathbb{Q}}{d\overline{\mathbb{Q}}} = c_\alpha\right\}$,
			where $c_\alpha$ and $k$ are certain orbit-dependent constants.
			This recovers the Neyman--Pearson optimal test for group invariance derived by \citet{lehmann1949theory}.
			
			Instead of specifying an alternative on the original sample space, we also consider specifying an alternative on each orbit, as well as conditional on the data-orbit.
			Finally, we also consider specifying an alternative on the group, by passing the data through a unique representative inversion function, which maps it to the group.
			We show that this nests traditional rank and sign-based tests, by using the fact that the ranks and signs are in bijection with the group of permutations and sign-flips.
			
		\subsection{Third contribution: e-processes and test martingales for group invariance}
			In Section \ref{sec:e-processes}, we study e-processes for group invariance.
			We generalize the characterization of e-values to e-processes for arbitrary filtrations, and show how they may be constructed by tracking an infimum over martingales for uniform distributions $\textnormal{Unif}(O)$ on orbits $O$.
			We link this to our derivation of optimal e-values, by showing how optimal e-values induce such orbit-wise martingales.
			Moreover, we identify a key challenge when constructing e-processes: the orbit in which the data lands is not necessarily measurable at the start of the filtration, and is possibly not even measurable for any sigma-algebra in the filtration.
			This explains why we cannot reduce the sequential problem to testing uniformity on the orbit in which the data lands: we must instead keep track of all orbits in which the data may feasibly land.
			
			In addition, we peek beyond compact groups by relying on an ergodic theorem for possibly non-compact groups.
			Here, we find e-values and e-processes may be characterized by an infimum over e-values for ergodic measures, which replace the role of uniform distributions on orbits in the compact setting.
			An example of an ergodic theorem is de Finetti's theorem, where these ergodic measures are the i.i.d. probabilities, which was explored in the context of e-processes for binary and $d$-ary data by \citep{ramdas2022testing}.
			
			In Section \ref{sec:seq_invariance}, we zoom-in towards a particular setup in which we consider testing whether the sequence $(X^n)_{n \geq 1}$ is invariant under a sequence of groups $(\mathcal{G}_n)_{n \geq 1}$.
			We study test martingales for this setup, which are e-processes that are not just anytime valid but satisfy a stronger condition.
			We show how they may easily be constructed by tailoring conditional e-values for their increments based on subgroups that stabilize the previous data.
			
			In Appendix \ref{sec:impoverishing}, we study the impoverishment of filtrations, which is the process of deliberately moving to a less-informative filtration, usually in order to design a more powerful e-process by shrinking the set of stopping times under which it must be valid.
			In practice, this means we restrict ourselves to looking at some statistic of the data, instead of at the underlying data itself.
			We show how we may generally impoverish filtrations in the context of group invariance, by using a statistic that is equivariant under a subgroup of the group.
			The problem then reduces to testing invariance of the statistic under the subgroup. 
			
			We then focus on equivariant statistics for which the subgroup only has a single orbit on its codomain.
			We find that an example of such an equivariant statistic is the unique representative inversion that maps the data to the group.
			This nests the idea studied by \citet{vovk2021testing} to pass to the ranks of the data in the context of testing exchangeability, which we show may be viewed as the representative inversion for exchangeability. 
			 
		\subsection{Simulations, application and illustration}
					
			We empirically illustrate our methods in two simulation experiments and an application.
			The first simulation mimics a standard case-control experiment under random treatment allocation.
			In the second experiment, we compare a sign-flipping e-process to one based on \citet{de1999general}, and find that ours is dramatically more powerful.
			The application is to the `hot hand' phenomenon in basketball, which is the belief that hitting a basketball shot increases the chances of hitting subsequent shots \citep{gilovich1985hot}.
			This is frequently studied by assuming that the shot outcomes are exchangeable in absence of the hot hand, so that rejecting exchangeability also rejects the hot hand \citep{miller2018surprised}.
			We leverage the powerful merging properties of e-values by multiplying e-values across players to obtain a more powerful e-value.
			This merging of evidence is highly relevant for the hot hand, as a single shot sequence of a player is known to contain little evidence regarding the hot hand \citep{ritzwoller2022uncertainty}.			
			
			In Appendix \ref{sec:gaussian}, we illustrate these methods on the problem of testing invariance on $\mathbb{R}^d$, $d \geq 1$, under an arbitrary compact group of orthonormal matrices, against a simple alternative that is a location shift under normality.
			For the special case of spherical invariance, this is connected to an example from \cite{lehmann1949theory} regarding the optimality of the $t$-test, which we slightly generalize.
			We also consider sign-symmetry, which produces an e-value that can be viewed as an admissible version of an e-value based on \cite{de1999general}.
			Furthermore, we consider exchangeability where we discover an interesting link to the softmax function.

		\subsection{Related literature}\label{sec:lit}
			At first glance, our work may seem intimately related to the work of \cite{perez2022statistics}.
			However, they consider invariance of \emph{collections of distributions} (both the null and the alternative), whereas we consider invariance of \emph{distributions themselves}.
			Specifically, a collection of distributions $\mathcal{P}$ is said to be invariant under a transformation $G$ if for any $\mathbb{P} \in \mathcal{P}$, its transformation $G\mathbb{P}$ by $G$ is also in $\mathcal{P}$.
			In contrast, invariance of a distribution $\mathbb{P}$ means that its transformation $G\mathbb{P}$ is equal to $\mathbb{P}$ itself.
			Intuitively, their work can be interpreted as \emph{testing in the presence of an invariant model}, whereas we consider \emph{testing whether the data generating process is invariant}.
						
			As our null hypothesis consists exclusively of invariant distributions, it is technically also invariant, so that one may believe their results may still apply under appropriate assumptions on the alternative.
			However, \cite{perez2022statistics} require that the group action is free, which means that if $G \mathbb{P} = \mathbb{P}$ for some $\mathbb{P} \in \mathcal{P}$ then $G$ must be the identity.
			In other words, applying a non-identity transformation to $\mathbb{P}$ \emph{must} change it.
			Our form of invariance instead requires that $G\mathbb{P} = \mathbb{P}$ for every $G$ and $\mathbb{P} \in \mathcal{P}$.
			This means that the settings do not overlap, except for the uninteresting setting in which the group only contains the identity element.
						
			\cite{vovk2023testing} independently derives a permutation e-value for testing exchangeability of binary random variables against a single specific alternative hypothesis.			
			Moreover, \citet{vovk2021testing} explores testing exchangeability in a sequential setup, by passing from the original data to its ranks.
			He exploits the fact that the sequential ranks are independent from the past ranks under exchangeability.
			He then converts these ranks into independent e-values, which are multiplied together to construct a test martingale under the rank-filtration.
			\citet{lardy2024anytime} apply this to testing group invariance in a setting similar to Section \ref{sec:seq_invariance} and Appendix \ref{sec:impoverishing}.
			We show how rank-based approaches may be viewed as a special case of using a unique representative inversion, where ranks appear as the representative inversion for exchangeability.
			
			A link between the softmax function and e-values for exchangeability was also made in unpublished early manuscripts of \cite{wang2022false} and \cite{ignatiadis2023values}, which they call a `soft-rank' e-value.
			In Remark \ref{rmk:softrank} in Appendix \ref{sec:softmax}, we explore the connection to our softmax likelihood ratio statistic, and find that their soft-rank e-value can be interpreted as a more volatile version.
			
			Testing the symmetry of a distribution, which we touch in Appendix \ref{sec:symmetry}, was also studied by \citet{ramdas2022admissible}, \citet{vovk2024nonparametric} and \citet{larsson2024numeraire}.
			
		\subsection{Notation and underlying assumptions}
			Throughout the paper, every `space' we consider is assumed to be second-countable locally compact Hausdorff, equipped with a Borel $\sigma$-algebra.
			We intentionally suppress this topology and the $\sigma$-algebra whenever possible, for notational conciseness.
			To avoid ambiguity, we sometimes write expectations $\mathbb{E}$ with a superscript and/or subscript $\mathbb{E}_X^{\mathbb{P}}$ to make explicit the measure over which is being integrated ($\mathbb{P}$), and the random variables over which the integration takes place ($X$).
			We use similar subscripts for probabilities.
			We frequently use $\overline{G}$ to denote a Haar-distributed random element in $\mathcal{G}$, assumed to be independent from everything else unless stated otherwise.

	\section{Background: group invariance}\label{sec:background}
		In this section, we discuss all the necessary background on group invariance.
		We recommend \citet{eaton1989group} for deeper treatment of invariance in statistics.
		
		\subsection{Compact groups}\label{sec:gi}
			A group $\mathcal{G}$ is a set equipped with some associative binary operator `$\times$' that is closed under composition and inversion, and contains an identity element $I$.
			For brevity, we use juxtaposition $G_1G_2 = G_1 \times G_2$ to denote the binary operation, $G_1, G_2 \in \mathcal{G}$.
			A subset of a group that is also a group is called a \emph{subgroup}.
		
			Throughout, unless stated otherwise, all groups we consider are \emph{compact groups}.
			Compact groups are groups that are also compact topological spaces.
			Compact groups are special in that they admit a unique \emph{invariant probability measure} $\mathbb{P}_{\text{Haar}}$ on $\mathcal{G}$ called the \emph{Haar probability measure}, which satisfies
			\begin{align*}
				\mathbb{P}_{\text{Haar}}(GA) = \mathbb{P}_{\text{Haar}}(A), \text{ for all } G \in \mathcal{G},
			\end{align*}
			for every event $A$ on the group, where $GA := \{Ga : a \in A\}$ is the event $A$ translated by $G$.
			
			The Haar probability measure can be interpreted as the uniform probability measure on the group: whenever we shift an event $A$ by some element $G$ of the group, its probability remains unchanged.
			We use $\overline{G}$ to denote a Haar-distributed random variable on $\mathcal{G}$, and we also write $\textnormal{Unif}(\mathcal{G}) := \mathbb{P}_{\text{Haar}}$.
			
			\begin{exm}[Orthonormal matrices]\label{exm:rotation}
				A typical example of a compact group acting on a sample space is the collection of all $n \times n$ orthonormal matrices, $n \geq 1$, which acts on $\mathbb{R}^n$ through matrix multiplication.
				This group action rotates or flips $n$-vectors about the origin.
				Here, the identity element $I$ is the identity matrix.
				Moreover, the inverse of an orthonormal matrix is simply its transpose: $G^{-1} = G'$, which is also orthonormal.
				The Haar measure is the uniform distribution over orthonormal matrices, and $\overline{G}$ is an orthonormal matrix drawn uniformly at random.
			\end{exm}
			
		\subsection{Group actions and orbits}
			In statistics, we are often interested in the action of a group $\mathcal{G}$ on a sample space $\mathcal{Y}$.
			We also denote such a \emph{group action} through juxtaposition: $(G, y) \mapsto Gy$, and assume that it is continuous.
			
			A group action partitions the sample space into \emph{orbits}.
			The orbit of a sample point $y \in \mathcal{Y}$, denoted by $O_y = \{z \in \mathcal{Y}\ |\ z = Gy,\ G \in \mathcal{G}\}$, can be interpreted as the set of all sample points that can be reached when starting from $y$ and applying an element of the group to it.
			We use $\mathcal{Y} / \mathcal{G}$ to denote the collection of orbits.
			We assign a single point $[y]$ in each orbit as the \emph{orbit representative} of the orbit $O_y$.
			That is, $[y] = Gy$ for some $G \in \mathcal{G}$.
			This means $O_{y} = O_{[y]}$ for any $y \in \mathcal{Y}$.
			We use $[\mathcal{Y}]$ to denote the collection of orbit representatives, and we call the map $[\cdot] : \mathcal{Y} \to [\mathcal{Y}]$ that maps $y$ to its orbit representative an \emph{orbit selector}.
			We assume the orbit selector is chosen to be measurable, which is possible if $\mathcal{G}$ is compact.
			
			\begin{examplecont}{exm:rotation}{B}\label{exm:rotation:B}
				The group of orthonormal matrices acts on the sample space $\mathcal{Y} = \mathbb{R}^n$ through matrix multiplication.
				Here, the collection of orbits $\mathcal{Y} / \mathcal{G}$ is the collection of hyperspheres in dimension $n$, each with a different radius.
				Given a unit vector $\iota$, we can assign the vector $r\iota$ as the orbit representative of the orbit with radius $r \geq 0$.
				The corresponding orbit selector is the map $y \mapsto \|y\|_2\iota$, since $y$ is on the hypersphere with radius $\|y\|_2$.
			\end{examplecont}
					
			\begin{exm}[One orbit]\label{exm:1orbit}
				If there is just a single orbit, we say that the group acts \emph{transitively} on the sample space.
				As the orbits partition the sample space, this means the sample space is the orbit.
				
				This happens, for example, if our sample space \emph{is} our group: $\mathcal{Y} = \mathcal{G}$.
				Another example is if we take Example \ref{exm:rotation:B}, but replace the sample space $\mathcal{Y} = \mathbb{R}^n$ with some hypersphere in dimension $n$.
 				In Section \ref{sec:inversion_kernels}, we discuss how we may generally reduce to a single orbit, which turns out to be a useful tool in statistical contexts.
 			\end{exm}
 					
		\subsection{Group invariant probability measures}
			In statistics, a sample space comes equipped with a collection of probability measures $\mathcal{P}$.
			The group action of $\mathcal{G}$ on $\mathcal{Y}$ \emph{induces a group action on the set of probabilities} $\mathcal{P}$.
			We can define this group action $(G, \mathbb{P}) \mapsto G\mathbb{P}$ as mapping the probability measure $\mathbb{P}$ to a probability measure $G\mathbb{P}$ that returns the $\mathbb{P}$-probability of the translation $G^{-1}A$ of an event $A$:
			\begin{align*}
				G\mathbb{P}(A) := \mathbb{P}(G^{-1}A).
			\end{align*}
			
			We may then extend the idea of a Haar probability measure, which is an invariant probability on the group $\mathcal{G}$, to invariant probabilities on a sample space $\mathcal{Y}$: we say that a probability $\mathbb{P}$ is invariant if the group action does not affect the probability.
			
			\begin{dfn}[Invariant probability]\label{dfn:1}
				$\mathbb{P}$ is an \emph{invariant probability measure} if $G\mathbb{P} = \mathbb{P}$, for every $G \in \mathcal{G}$.
			\end{dfn}
						
			On each orbit $O$, there exists a unique invariant probability measure, which may therefore be safely called `the' uniform probability $\textnormal{Unif}(O)$ on the orbit $O$.
			If there is just a single orbit, this means there is a single invariant probability.
			This happens, for example, if $\mathcal{Y} = \mathcal{G}$: the uniform probability is then the Haar measure $\textnormal{Unif}(\mathcal{G})$.
			If there are multiple orbits, there are generally multiple invariant probabilities: any probability mixture over uniform distributions on orbits is an invariant probability.
			In fact, in Lemma \ref{lem:invariance_random_variable}, we see that the converse also holds: any invariant probability may be viewed as a mixture over uniform probabilities on orbits.

 			\begin{examplecont}{exm:rotation}{C}
 				A typical example of an invariant distribution on $\mathbb{R}^n$ under the group of orthonormal matrices is the $n$-dimensional i.i.d. Gaussian distribution with mean zero and some variance $\sigma^2 \geq 0$.
 				In fact, this almost characterizes the Gaussian: the multivariate standard Gaussians are the \emph{only} rotationally-invariant distributions that have independent marginals, by the Herschel-Maxwell theorem.
 			\end{examplecont}

		\subsection{Equivalent characterizations of invariance}
			Beyond Definition \ref{dfn:1}, there exist several equivalent ways to characterize an invariant probability measure.
			In Lemma \ref{lem:invariance_random_variable} we list a number of such equivalent definitions, expressed in terms of a random variable $Y$.
			These can also be expressed in terms of probability measures, if desired.
			But we find that discussing our results in the context of random variables generally yields more easily interpretable statements.
			A proof of these statements may be found in Chapter 4 of \citet{eaton1989group}.
			
			\begin{lem}[Equivalent definitions of invariance]\label{lem:invariance_random_variable}
				$Y$ is an \emph{invariant random variable} under a compact group $\mathcal{G}$ if one of the following equivalent conditions holds:
				\begin{enumerate}
					\item[1.] The law $\mathbb{P}_Y$ of $Y$ is invariant,
					\item[2.] $Y \overset{d}{=} GY$, for every $G \in \mathcal{G}$,
					\item[3.] $Y \overset{d}{=} \overline{G}Y$, where $\overline{G} \sim \textnormal{Unif}(\mathcal{G})$ independently,
					\item[4.] $Y \overset{d}{=} \overline{G}[Y]$, where $\overline{G} \sim \textnormal{Unif}(\mathcal{G})$ independently,
					\item[5.] $\textnormal{Unif}(O_{Y})$ is a version of the conditional law $\mathbb{P}_Y(\cdot \mid O_{Y})$ of $Y$ given $O_{Y}$.
				\end{enumerate}
			\end{lem}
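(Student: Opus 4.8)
The plan is to prove the five conditions equivalent by establishing the cycle $1 \Rightarrow 2 \Rightarrow 3 \Rightarrow 4 \Rightarrow 5 \Rightarrow 1$. The first implication is essentially definitional: since the induced action satisfies $G\mathbb{P}_Y(A) = \mathbb{P}_Y(G^{-1}A) = \mathbb{P}(GY \in A)$, the measure $G\mathbb{P}_Y$ is exactly the law of $GY$, so invariance of $\mathbb{P}_Y$ (condition 1) is literally the statement $GY \overset{d}{=} Y$ for every $G$ (condition 2). For $2 \Rightarrow 3$ I would integrate over the independent Haar draw: for bounded measurable $f$, joint measurability of $(g,y) \mapsto f(gy)$ (from continuity of the action) licenses Fubini, giving $\mathbb{E}[f(\overline{G}Y)] = \int \mathbb{E}[f(gY)]\, d\mathbb{P}_{\text{Haar}}(g)$, and condition 2 collapses the inner expectation to $\mathbb{E}[f(Y)]$, so that $\overline{G}Y \overset{d}{=} Y$.

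The workhorse for the remaining steps is the observation that, for any orbit $O$ and any base point $z \in O$, the pushforward of the Haar measure under $g \mapsto gz$ is an invariant probability supported on $O$ (invariance uses left-invariance of Haar), hence by the uniqueness of the invariant probability on each orbit---stated just above the lemma---it equals $\textnormal{Unif}(O)$, independently of $z$. Given this, $3 \Rightarrow 4$ follows by conditioning on $Y = y$: the conditional law of $\overline{G}Y$ is the pushforward of Haar under $g \mapsto gy$ and that of $\overline{G}[Y] = \overline{G}[y]$ is the pushforward under $g \mapsto g[y]$, and both equal $\textnormal{Unif}(O_y)$ because $y$ and $[y]$ share an orbit; thus $\overline{G}Y \overset{d}{=} \overline{G}[Y]$, which chains with condition 3 to give $Y \overset{d}{=} \overline{G}[Y]$. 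For $4 \Rightarrow 5$ I would apply the measurable map $y \mapsto ([y], y)$ to both sides of $Y \overset{d}{=} \overline{G}[Y]$; since $\overline{G}[Y]$ never leaves the orbit $O_Y$ we have $[\overline{G}[Y]] = [Y]$, so this lifts to $([Y], Y) \overset{d}{=} ([Y], \overline{G}[Y])$, and reading off the regular conditional distribution given the orbit label $[Y]$ identifies the conditional law of $Y$ with that of $\overline{G}[Y]$, namely $\textnormal{Unif}(O_Y)$. Finally $5 \Rightarrow 1$ is a disintegration argument: condition 5 writes $\mathbb{P}_Y$ as a mixture $\int \textnormal{Unif}(O)\, d\mu(O)$ of invariant probabilities over the orbit space (with $\mu$ the law of $O_Y$), and since each $G(\cdot)$ fixes every $\textnormal{Unif}(O)$ and commutes with the mixture, $\mathbb{P}_Y$ is invariant.

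I expect the main obstacle to be measure-theoretic rather than algebraic. The clean formulation of conditions 4 and 5 relies on regular conditional distributions and a disintegration of $\mathbb{P}_Y$ over the orbit space, which exist thanks to the standing assumption that the spaces are second-countable locally compact Hausdorff (hence standard Borel) and that the orbit selector $[\cdot]$ is measurable; the care lies in handling the orbit $O_Y$ through the measurable label $[Y]$ and in checking the joint measurability needed for Fubini. Once the uniqueness of the invariant probability on each orbit and its representation as a Haar pushforward from an arbitrary base point are granted---both available from the text preceding the lemma---every implication in the cycle reduces to bookkeeping of this kind.
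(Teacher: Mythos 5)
Your proof is correct, but note that the paper does not prove this lemma at all: it defers to Chapter~4 of Eaton (1989), so there is no in-paper argument to match. Your cyclic argument $1 \Rightarrow 2 \Rightarrow 3 \Rightarrow 4 \Rightarrow 5 \Rightarrow 1$ is sound and uses the standard ingredients: the identification of $G\mathbb{P}_Y$ with the law of $GY$ makes $1 \Leftrightarrow 2$ definitional; Fubini over the independent Haar draw gives $2 \Rightarrow 3$; and the workhorse fact that the pushforward of Haar under $g \mapsto gz$ is an invariant probability on $O_z$, hence equals $\textnormal{Unif}(O_z)$ for every base point $z$, correctly drives $3 \Rightarrow 4$ and $4 \Rightarrow 5$, with $5 \Rightarrow 1$ closing the loop by disintegrating $\mathbb{P}_Y$ over the orbit space and using invariance of each fibre measure. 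The one place where your argument leans on an unproved input is the uniqueness of the invariant probability on each orbit; you are entitled to it because the paper asserts it just before the lemma, but be aware that this uniqueness (essentially uniqueness of the invariant measure on a homogeneous space of a compact group) is the genuinely nontrivial content that Eaton's Chapter~4 supplies, so your proof is a correct reduction to that fact rather than a proof from scratch. The measure-theoretic caveats you flag (measurable orbit selector, existence of regular conditional distributions on standard Borel spaces, joint measurability of the action for Fubini) are exactly the right ones and are covered by the paper's standing topological assumptions.
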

			
			Condition 4 states that a draw from an invariant random variable $Y$ can be decomposed (deconvolved) into first sampling an orbit representative $[Y]$ and subsequently multiplying it by $\overline{G}$, independently sampled uniformly from $\mathcal{G}$.
			
			Condition 5 restates this in terms of orbits: a draw from $Y$ can be viewed as first sampling an orbit $O_{Y}$ using some unspecified process, and subsequently sampling uniformly from this orbit.
			This decomposition is the key to testing invariance, where the idea is to effectively discard the first part of this sampling process, and only test whether $Y$ is uniform conditional on the orbit in which it is observed.
			
			A useful property is $G\overline{G} \overset{d}{=} \overline{G}G \overset{d}{=} \overline{G}$, which follows from Lemma \ref{lem:invariance_random_variable} by considering the invariant random variable $Y = \overline{G}$, and the fact that the Haar measure on a compact group is both left- and right-invariant.
			
			\begin{examplecont}{exm:rotation}{C}
 				A draw from an $n$-dimensional standard Gaussian $Y$ can be decomposed into first sampling a radius (and so orbit) from a $\chi_n$-distribution, and subsequently independently drawing an $n$-vector uniformly from the hypersphere with this radius (from the sampled orbit).
 				Lemma \ref{lem:invariance_random_variable} states that any rotationally invariant random variable can be characterized as such: first sampling a radius using some distribution, and subsequently independently drawing an $n$-vector uniformly from the sampled orbit.
 			\end{examplecont}
 			
 		\subsection{Constructing invariant probability measures and random variables}\label{sec:group_averaged_measure}
 			We can construct an invariant probability from a probability measure $\mathbb{P}$ by averaging it over the group:
			\begin{align*}
				\overline{\mathbb{P}} := \mathbb{E}_{\overline{G}}[\overline{G}\mathbb{P}],
			\end{align*}
			where $\overline{G} \sim \textnormal{Unif}(\mathcal{G})$.
			These \emph{group-averaged invariant measures} play a central role in optimal tests and e-values.
			For an invariant probability $\mathbb{P}$, we have $\mathbb{P} = \overline{\mathbb{P}}$, so that this averaging has no effect.
			We can also express this in terms of random variables: if $Y \sim \mathbb{P}$, then $\overline{G}Y \sim \overline{\mathbb{P}}$.

		\subsection{Invariance through a statistic}		
			Sometimes, we only look at our random variable $Y$ through a statistic $S$, such as a test statistic.
			In such situations, it does not matter whether $Y$ is actually invariant; it only matters whether it \emph{looks} invariant when viewed through this statistic.
			This leads us to the following weaker notion of invariance, which recovers the standard notion if $S$ is invertible.
			This will yield a more general condition under which tests for group invariance are valid, given the choice of test statistic.
			
			\begin{dfn}[Invariance through a statistic]\label{dfn:invariance_through_statistic}
				We say that $Y$ \emph{looks} $\mathcal{G}$ invariant through $S$ if $S(\overline{G}Y) \mid O_Y \overset{d}{=} S(Y) \mid O_Y$ almost surely.
			\end{dfn}
			
			In Appendix \ref{appn:invariance_through}, we cover a simple example that illustrates the difference between invariance and invariance through a statistic.

			\begin{rmk}\label{rmk:counterexample}
				It may be tempting to remove the `conditional on the orbit'-component from Definition \ref{dfn:invariance_through_statistic}, and simply demand that $S(\overline{G}Y) \overset{d}{=} S(Y)$.
				A related condition is considered by \citet{kashlak2022asymptotic}.
				While potentially interesting in its own right, this condition is insufficient for testing invariance: we include a counterexample in Appendix \ref{appn:counterexample}.
				There, this condition is satisfied but the random variable is not invariant through $S$, and we find that the resulting classical group invariance test is not valid.
				This counterexample arose in personal communication with Adam Kashlak.
			\end{rmk}

		\subsection{Reducing to a single orbit and representative inversion kernels}\label{sec:inversion_kernels}
			When testing invariance under a group of permutations (exchangeability), it is common to convert data to its ranks (relative to a canonical ordering).
			Similarly, it is common to only look at the signs of random variables that are invariant under sign-flipping (symmetric about zero) and a normalized statistic $y \mapsto y / \|y\|_2$ for rotation-invariant random variables.
			Functions of these statistics give rise to rank tests, sign tests and the $t$-test (see Example \ref{exm:t-test}).
			A desirable property of these statistics is that their null distribution does not depend on the orbit.
			
			Ranks, signs and normalization are special cases of a \emph{representative inversion} $\gamma$ (see \cite{kallenberg2011invariant}, Chapter 7 of \cite{kallenberg2017random} and \cite{chiu2023non}).
			A representative inversion $\gamma$ can be viewed as mapping the data $Y$ back to the group $\mathcal{G}$, such that if $Y$ is $\mathcal{G}$-invariant then $\gamma(Y) \sim \textnormal{Unif}(\mathcal{G})$.
			
 			To define an inversion kernel, it is convenient to first assume that the group $\mathcal{G}$ acts \emph{freely} on $\mathcal{Y}$.
			This means that $Gy = y$ for some $y \in \mathcal{Y}$ implies $G = I$.
			In the context of permutations, this assumption means that there are no ties: indeed, barring ties, a non-identity permutation of data always modifies the original data.
			Under this assumption, we can uniquely define the representative inversion as a map $\gamma : \mathcal{Y} \to \mathcal{G}$ that takes $y$ and returns the element $G$ that carries the representative element $[y]$ on the orbit of $y$ to $y$.
			That is, $\gamma(y)[y] = y$.
							
			If the group action is not free, then there may exist multiple elements in $\mathcal{G}$ that carry $[y]$ to $y$, so that $\gamma(y)$ is not uniquely defined.
			For the non-free setting, we overload the notation of $\gamma$ so that $\gamma(y)$ is uniformly drawn from the elements in $\mathcal{G}$ that carry $[y]$ to $y$, which is well-defined as shown in Theorem 7.14 of \cite{kallenberg2017random}.
			This gives us $\gamma(y) [y] = y$ almost surely.
			Section \ref{sec:exm_permutation} in the Supplementary Material contains a concrete illustration.
			
			\begin{rmk}[Inversion kernels, ranks, signs and normalization]\label{rmk:relationship_rank_inversion_kernel}
				Ignoring ties, the relationship between the inversion kernel and ranks is that ranks are in bijective correspondence to the group of permutations.
				In case of ties, the inversion kernel can be viewed as a slight generalization, that breaks ties through randomization by smearing out the probability mass over different permutations that yield the same data due to ties.
					
				Similarly, barring zeros, the signs of a tuple of data $(X_1, ..., X_n)$ are in bijective correspondence to a group of sign-flips $\{-1, 1\}^n$.
				In $\mathbb{R}^2 \setminus \{0\}$, the normalized vector $y \mapsto y / \|y\|_2$ is in bijective correspondence to the special orthogonal group of `rotations'.
				In higher dimensions, there may be multiple rotations that carry the representative element $[y/ \|y\|_2]$ to $y/ \|y\|_2$, and the resulting inversion kernel can be interpreted as uniformly sampling one of these rotations.
			\end{rmk}
						
	\section{Tests and e-values for group invariance}\label{sec:ph_gi}
		\subsection{Hypothesis and e-value}
			Our goal is to measure evidence against the hypothesis that a random variable $Y$ is drawn from some $\mathcal{G}$ invariant distribution:
			\begin{align*}
				Y \text{ is } \mathcal{G} \text{ invariant}.
			\end{align*}
			Equivalently, we test whether the latent distribution from which $Y$ is sampled is in the collection $H := \{\mathbb{P} : \mathbb{P} \textnormal{ is } \mathcal{G} \textnormal{ invariant}\}$.
			For this purpose, we use an e-value $\varepsilon : \mathcal{Y} \to [0, \infty]$, which is said to be valid for the hypothesis $H$ if 
			\begin{align*}
				\sup_{\mathbb{P} \in H} \mathbb{E}^{\mathbb{P}}\,\varepsilon \leq 1.
			\end{align*}
			We say an e-value is exact for $H$ if $\mathbb{E}^{\mathbb{P}}\,\varepsilon = 1$ for every $\mathbb{P} \in H$.
						
			\begin{rmk}[Exact e-value]
				The term `exact e-value' with respect to a hypothesis $H$ is typically reserved for the weaker property $\sup_{\mathbb{P} \in H}\mathbb{E}^{\mathbb{P}}\,\varepsilon = 1$.
				Our property may be viewed as `uniformly exact', but for brevity we simply refer to it as exact.
			\end{rmk}
			
		\subsection{Characterizing e-values for group invariance}\label{sec:characterization_e-values}	
			We immediately present our first result, which characterizes valid and exact e-values for group invariance.
			It states that an e-value is valid for group invariance if and only if it is valid for a uniform distribution on each orbit.
			A formal proof is provided in Appendix \ref{proof:thm:e-values}.
			
			\begin{thm}[Characterizing e-values]\label{thm:e-values}
				Let $\varepsilon : \mathcal{Y} \to [0, \infty]$. Then,
				\begin{itemize}
					\item[(i)] $\varepsilon$ is a valid e-value for $\mathcal{G}$ invariance if and only if  $\mathbb{E}^{\textnormal{Unif}(O)}\, \varepsilon \leq 1$, for every $O \in \mathcal{Y} / \mathcal{G}$,
					\item[(ii)] $\varepsilon$ is an exact e-value for $\mathcal{G}$ invariance  if and only if $\mathbb{E}^{\textnormal{Unif}(O)}\,\varepsilon = 1$, for every $O \in \mathcal{Y} / \mathcal{G}$.
				\end{itemize}
			\end{thm}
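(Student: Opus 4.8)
The plan is to reduce the supremum over all invariant probabilities in $H$ to a condition that can be checked one orbit at a time, using the disintegration of an invariant measure supplied by Lemma \ref{lem:invariance_random_variable}. The backbone is condition 5 of that lemma: under any invariant $\mathbb{P}$, a version of the conditional law of $Y$ given its orbit $O_Y$ equals $\textnormal{Unif}(O_Y)$. Equivalently, every invariant $\mathbb{P}$ is a mixture over the orbit-wise uniform laws, so that integrating $\varepsilon$ against $\mathbb{P}$ amounts to averaging its orbit-wise expectations.

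First, the ``only if'' directions of both (i) and (ii) are essentially immediate. For each fixed orbit $O$, the uniform law $\textnormal{Unif}(O)$ is itself an invariant probability measure --- it is the unique invariant probability supported on $O$, as noted following Lemma \ref{lem:invariance_random_variable} --- and hence lies in $H$. Consequently, validity $\sup_{\mathbb{P} \in H} \mathbb{E}^{\mathbb{P}} \varepsilon \le 1$ immediately forces $\mathbb{E}^{\textnormal{Unif}(O)} \varepsilon \le 1$, and exactness $\mathbb{E}^{\mathbb{P}} \varepsilon = 1$ for all $\mathbb{P} \in H$ forces $\mathbb{E}^{\textnormal{Unif}(O)} \varepsilon = 1$, by simply instantiating $\mathbb{P} = \textnormal{Unif}(O)$.

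For the ``if'' directions, I would take an arbitrary invariant $\mathbb{P}$ with $Y \sim \mathbb{P}$ and apply the tower property after conditioning on the orbit $O_Y$, which is a measurable function of $Y$ through the assumed-measurable orbit selector $[\cdot]$. Since $\varepsilon \ge 0$, all expectations are well-defined in $[0, \infty]$, so no integrability caveat is needed to write
\[
	\mathbb{E}^{\mathbb{P}}[\varepsilon(Y)] = \mathbb{E}^{\mathbb{P}}\big[\, \mathbb{E}^{\mathbb{P}}[\varepsilon(Y) \mid O_Y] \,\big].
\]
By condition 5 of Lemma \ref{lem:invariance_random_variable}, the inner conditional expectation equals $\mathbb{E}^{\textnormal{Unif}(O_Y)} \varepsilon$ almost surely, so the right-hand side becomes $\int \mathbb{E}^{\textnormal{Unif}(O)} \varepsilon \, d\mathbb{P}_{[Y]}(O)$, the average over orbits of the orbit-wise expectations. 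Under the hypothesis of (i) each integrand is $\le 1$, giving $\mathbb{E}^{\mathbb{P}} \varepsilon \le 1$; under the hypothesis of (ii) each integrand equals $1$, giving $\mathbb{E}^{\mathbb{P}} \varepsilon = 1$. Taking the supremum over $\mathbb{P} \in H$ then completes both directions.

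The main obstacle is purely measure-theoretic: making rigorous the disintegration step, namely that $O \mapsto \textnormal{Unif}(O)$ is a genuine measurable probability kernel and that $O \mapsto \mathbb{E}^{\textnormal{Unif}(O)} \varepsilon$ is measurable, so that the final integral over orbits is well-posed. Compactness of $\mathcal{G}$ is exactly what guarantees this: it supplies the Haar measure, the measurable orbit selector, and hence a measurable family of orbit-wise uniform laws, matching the disintegration underlying condition 5 of Lemma \ref{lem:invariance_random_variable} (see Chapter 4 of \citet{eaton1989group}). Once this measurable disintegration is in hand, the remainder is just Tonelli's theorem together with the tower property, and the subtlety that the conditional law is only defined up to $\mathbb{P}_{[Y]}$-almost sure equivalence is harmless, since we only ever integrate the orbit-wise expectations against $\mathbb{P}_{[Y]}$ itself.
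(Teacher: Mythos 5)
Your proposal is correct and follows essentially the same route as the paper: the ``only if'' direction is obtained by instantiating $\mathbb{P} = \textnormal{Unif}(O)$, and the ``if'' direction by decomposing an arbitrary invariant $\mathbb{P}$ into a mixture over orbit-wise uniform laws via Lemma \ref{lem:invariance_random_variable}. The only cosmetic difference is that you invoke condition 5 (the conditional law given the orbit) together with the tower property, whereas the paper uses condition 4 ($Y \overset{d}{=} \overline{G}[Y]$) together with Tonelli's theorem; these are two phrasings of the same disintegration.
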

			Throughout, we use that for $y \in O$, $\mathbb{E}^{\textnormal{Unif}(O)}[\varepsilon] =\mathbb{E}_{\overline{G}}[\varepsilon(\overline{G}y)]$, since $\overline{G}y \sim \textnormal{Unif}(O)$.
			
			Because the orbits partition the sample space, our data $Y$ lands in exactly one orbit: $O_Y$.
			As a consequence, we actually only need our e-value to be valid under $\textnormal{Unif}(O_Y)$, as captured in Corollary \ref{cor:conditional_validity}.
			This is the key that facilitates testing group invariance.
			In particular, we may view the problem of testing group invariance as first observing the orbit $O_Y$ and then testing the simple hypothesis that $Y$ is uniform on $\textnormal{Unif}(O_Y)$.

			\begin{cor}[Characterizing through conditioning]\label{cor:conditional_validity}
				Let $\varepsilon : \mathcal{Y} \to [0, \infty]$. Then,
				\begin{itemize}
					\item[(i)] $\varepsilon$ is a valid e-value for $\mathcal{G}$ invariance if and only if  $\mathbb{E}^{\textnormal{Unif}(O_Y)}[\varepsilon]\leq 1$, $O_Y$-a.s.,
					\item[(ii)] $\varepsilon$ is an exact e-value for $\mathcal{G}$ invariance if and only if  $\mathbb{E}^{\textnormal{Unif}(O_Y)}[\varepsilon] = 1$, $O_Y$-a.s.
				\end{itemize}
			\end{cor}
			
			\begin{rmk}[Non-compact groups]\label{rmk:ergodic_non_sequential}
				For a non-compact group $\mathcal{G}$ acting on a Borel space, invariant probabilities still admit a decomposition: any $\mathcal{G}$ invariant probability may be viewed as a mixture over ergodic probabilities (see \citet{kallenberg2021foundations} Theorem 25.24).
				In an analogue of Theorem \ref{thm:e-values}, ergodic probabilities then take the place of the uniform $\textnormal{Unif}(O)$.
				
				Corollary \ref{cor:conditional_validity} falls apart: the conditional probability given $O_Y$ is not the same for each invariant probability.
				Hence, the relevant ergodic probability cannot be identified from the orbit of a single observation.
				This property is key for many of our results, so we assume compactness throughout, only briefly returning to non-compact groups  in Remark \ref{rmk:ergodic}.
			\end{rmk}

		\subsection{Generic traditional tests for group invariance}\label{sec:trad_gi_test}
			Given any test statistic $T : \mathcal{Y} \to \mathbb{R}$, ideally designed to be large under the alternative, the classical group invariance test $\varepsilon_\alpha : \mathcal{Y} \to [0, 1/\alpha]$ is given by
			\begin{align}\label{ineq:group_invariance_test}
				\varepsilon_\alpha(y)
					= \tfrac{1}{\alpha}\mathbb{I}\left\{T(y) > q_\alpha^{\overline{G}}[T(\overline{G}y)]\right\} + \tfrac{c([y])}{\alpha} \mathbb{I}\left\{T(y) = q_{\alpha}^{\overline{G}}\left[T(\overline{G}y)\right]\right\},
			\end{align}
			where $q_\alpha^{\overline{G}}[T(\overline{G}y)]$ denotes the $\alpha$ upper-quantile of the distribution of $T(\overline{G}y)$ for $\overline{G} \sim \textnormal{Unif}(\mathcal{G})$ and $y$ fixed, and $c([y])$ is some orbit-dependent constant.\footnote{$c([y])$ solves $c([y]) \times \mathbb{P}_{\overline{G}_2}\left(T(\overline{G}_2[y]) = q_\alpha^{\overline{G}}[T(\overline{G}[y])]\right) = \alpha - \mathbb{P}_{\overline{G}_2}\left(T(\overline{G}_2[y]) > q_{\alpha}^{\overline{G}}\left[T(\overline{G}[y])\right]\right)$, where $\overline{G}, \overline{G}_2 \sim \textnormal{Unif}(\mathcal{G})$, independently.}
			
			If we ignore the final term in \eqref{ineq:group_invariance_test}, which vanishes in continuous-data settings, this test rejects at level $\alpha$ if the test statistic exceeds an orbit-dependent critical value $q_\alpha^{\overline{G}}[T(\overline{G}y)]$.
			If we do include the final term, then the value $\alpha\varepsilon_\alpha(y)$ is classically interpreted as a probability with which we should subsequently reject the hypothesis using external randomization.
	
			It is well-known that, for any $T$, $\varepsilon_\alpha$ is exact for $\mathcal{G}$-invariance.
			We extend this result, by showing $\varepsilon_\alpha$ is exact for $\mathcal{G}$ invariance through $T$, as in Definition \ref{dfn:invariance_through_statistic}.
			To the best of our knowledge, this aspect is novel.
			The conditional validity implies that the test statistic $T$ may depend on the orbit.
			Its proof can be found in Section \ref{proof:theorem_traditional} in the Supplementary Material.
			
			\begin{thm}\label{thm:gi_p_value}
				If $Y$ looks $\mathcal{G}$ invariant through $T$, then $\mathbb{E}_{Y}[\varepsilon_\alpha \mid O_Y] = 1$, $O_Y$-a.s., which implies $\mathbb{E}_Y[\varepsilon_\alpha] = 1$.
			\end{thm}
			
			\begin{exm}[$t$-test]\label{exm:t-test}
				Suppose $\mathcal{Y} = \mathbb{R}^n$ and $T(y) = \iota'y/\|y\|_2$, where $\iota$ is some unit vector, typically $\iota = n^{-1/2}(1, 1, \dots, 1)$.
				If $Y$ is spherically invariant through $T$, then $T(Y)$ is \textnormal{Beta}$(\tfrac{n-1}{2}, \tfrac{n-1}{2})$-distributed on $[-1, 1]$ (see e.g. \cite{koning2023more} for a proof).
				Equivalently, $\sqrt{n-1}T(Y) / \sqrt{1 - T(Y)^2}$ is $t$-distributed.
				The resulting test for spherical invariance is also known as the $t$-test.
			\end{exm}
			
			\begin{exm}[Conformal prediction]\label{exm:conformal}
				Here, we consider the most basic form of conformal prediction \citep{shafer2008tutorial, angelopoulos2024theoretical}.
				Suppose $\mathcal{Y} = \mathbb{R}^{n+1}$ and $\mathcal{G}$ is the group of permutations acting on the canonical basis of $\mathbb{R}^{n+1}$.
				Let $Y^{n+1}$ be a $\mathcal{G}$ invariant (exchangeable) random variable on $\mathcal{Y}$, and let $T : \mathcal{Y} \to \mathbb{R}$ be a test statistic that only depends on the final element $Y_{n+1}$.
				Suppose we only observe $Y^n = (Y_1, \dots, Y_n)$ and want to test whether the unobserved $Y_{n+1}$ equals $y^*$.
				We can then use the permutation test based on $T((Y^n, y^*))$, which is also known as conformal inference.
				Repeating this test for all $y^* \in \mathcal{Y}$ and collecting the values of $y^*$ for which we do not reject yields the conformal prediction set, which is a confidence set for $Y_{n+1}$ in $\mathbb{R}$.
			\end{exm}
			
		\subsection{Generic e-values for group invariance}\label{sec:e-value_generic}
			We now move beyond traditional group invariance tests, by deriving `generic' exact e-values for group invariance based on some test statistic $T$.
			As with the traditional group invariance tests in Section \ref{sec:trad_gi_test}, we retain great freedom in our selection of the test statistic.
			In particular, let $T$ be some arbitrary non-negative test statistic that satisfies $0 < \mathbb{E}_{\overline{G}}T(\overline{G}y) < \infty$ for every $y \in \mathcal{Y}$.
			
			Based on this test statistic $T$, we consider the e-value
			\begin{align}\label{eq:exact_e_form}
				\varepsilon_T(y) 
					= \frac{T(y)}{\mathbb{E}_{\overline{G}}T(\overline{G}y)},  \quad \textnormal{with }\overline{G} \sim \textnormal{Unif}(\mathcal{G}).
			\end{align}
			
			Theorem \ref{thm:essentially_complete} shows that this e-value is exact, and that any exact e-value for $\mathcal{G}$ invariance may be construed in this manner.
			The proof uses Theorem \ref{thm:e-values}, and is found in Appendix \ref{proof:essentially_complete}.
			
			\begin{thm}\label{thm:essentially_complete}
				An e-value $\varepsilon$ is exact for $\mathcal{G}$ invariance if and only if it is of the form $\varepsilon_T$ for some statistic $T$.
			\end{thm}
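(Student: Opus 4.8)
The plan is to route both implications through Theorem~\ref{thm:e-values}(ii), which states that an e-value $\varepsilon$ is exact for $\mathcal{G}$ invariance precisely when $\mathbb{E}^{\textnormal{Unif}(O)}\,\varepsilon = 1$ for every orbit $O$, or equivalently $\mathbb{E}_{\overline{G}}[\varepsilon(\overline{G}y)] = 1$ for every $y \in \mathcal{Y}$ (using $\overline{G}y \sim \textnormal{Unif}(O_y)$). The whole argument then hinges on one structural observation about $\varepsilon_T$: its denominator $d(y) := \mathbb{E}_{\overline{G}}T(\overline{G}y)$ is constant along orbits. To establish this I would fix $y$ and any $y' = Hy$ in the same orbit and invoke right-invariance of the Haar measure, $\overline{G}H \overset{d}{=} \overline{G}$, as recorded below Lemma~\ref{lem:invariance_random_variable}, to obtain $d(y') = \mathbb{E}_{\overline{G}}T(\overline{G}Hy) = \mathbb{E}_{\overline{G}}T(\overline{G}y) = d(y)$. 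Since every point of $O_y$ is of the form $Hy$, the map $d$ is constant on $O_y$; write its common value on the orbit $O$ as $d_O$, with $0 < d_O < \infty$ by the standing integrability assumption on $T$.

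For the ``if'' direction, suppose $\varepsilon = \varepsilon_T$, fix $y$ and set $O = O_y$. Because $\overline{G}y$ again lands in $O$ for every realization of $\overline{G}$, the inner denominator equals $d_O$, so
\[
\mathbb{E}_{\overline{G}}[\varepsilon_T(\overline{G}y)] = \mathbb{E}_{\overline{G}}\left[\frac{T(\overline{G}y)}{\mathbb{E}_{\overline{G}'}T(\overline{G}'\overline{G}y)}\right] = \frac{1}{d_O}\,\mathbb{E}_{\overline{G}}T(\overline{G}y) = \frac{d_O}{d_O} = 1,
\]
and Theorem~\ref{thm:e-values}(ii) then yields that $\varepsilon_T$ is exact.

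For the ``only if'' direction, suppose $\varepsilon$ is exact and simply take $T := \varepsilon$. This $T$ is non-negative since $\varepsilon$ maps into $[0,\infty]$, and its denominator satisfies $\mathbb{E}_{\overline{G}}\varepsilon(\overline{G}y) = 1$ for every $y$ by Theorem~\ref{thm:e-values}(ii); in particular $0 < d(y) = 1 < \infty$, so $T$ meets the integrability requirement. Dividing $\varepsilon$ by this denominator leaves it unchanged, $\varepsilon_T(y) = \varepsilon(y)/1 = \varepsilon(y)$, so $\varepsilon = \varepsilon_T$ is of the required form.

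The mathematical content here is light, so I expect no genuine obstacle, only measure-theoretic bookkeeping: ensuring $d(\cdot)$ is measurable and orbit-constant as an honest function (via the measurable orbit selector assumed in Section~\ref{sec:background}), and handling the caveat that an exact $\varepsilon$ is finite only $\textnormal{Unif}(O)$-almost surely on each orbit, so the witness $T = \varepsilon$ may equal $+\infty$ on a null set -- harmless, since the denominator is still exactly $1$. It is also worth stating explicitly that the representation is non-unique: any orbit-wise rescaling of $T$ gives the same $\varepsilon_T$, and $T = \varepsilon$ is merely one convenient choice.
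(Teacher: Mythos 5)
Your proposal is correct and follows essentially the same route as the paper: the forward direction uses $T=\varepsilon$ with denominator identically $1$ by Theorem~\ref{thm:e-values}(ii), and the converse rests on the orbit-constancy of $y \mapsto \mathbb{E}_{\overline{G}}T(\overline{G}y)$ via right-invariance of the Haar measure, which the paper writes inline as $\mathbb{E}_{\overline{G}_2}T(\overline{G}_2\overline{G}_1y)=\mathbb{E}_{\overline{G}_2}T(\overline{G}_2y)$. Isolating that observation as a preliminary step, as you do, is a cosmetic rather than substantive difference.
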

			
			By Theorem \ref{thm:essentially_complete}, we can use any appropriately integrable test statistic $T$ to construct an exact e-value for $\mathcal{G}$ invariance.
			In fact, as a non-exact e-value is such a statistic, we can plug it in for $T$ to transform it into an exact e-value.
			We exploit this trick in Appendix \ref{sec:symmetry}.

			Proposition \ref{prp:e-value_invariance_through} shows that we may also relax the assumption of $\mathcal{G}$ invariance by incorporating the choice of the test statistic.
			This assumption is weaker than $\mathcal{G}$ invariance through $T$, which we assume for Proposition \ref{thm:gi_p_value}: we only require the expectation of $T(Y)$ on each orbit to equal the uniform orbit-average $\mathbb{E}_{\overline{G}}T(\overline{G}y)$.
			Its proof is given in Appendix \ref{proof:e-value_invariance_through}.
			
			\begin{prp}\label{prp:e-value_invariance_through}
				Assume that $\mathbb{E}_Y[T(Y) \mid O_Y] = \mathbb{E}_{\overline{G}}[T(\overline{G}Y) \mid O_Y]$ almost surely. 
				Then $\mathbb{E}_Y[\varepsilon_T(Y) \mid O_Y] = 1$, and so $\mathbb{E}_Y[\varepsilon_T(Y)] = 1$.
			\end{prp}

			\begin{exm}[An e-value version of the $t$-test]\label{exm:e-value_t-test}
				Continuing from Example \ref{exm:t-test}, one may desire to derive ``the e-value version'' of the $t$-test.
				But because e-values are a (rich) generalization of binary tests, there is no unique e-value version of the $t$-test: any e-value $\varepsilon_T$ based on a statistic $T$ that is non-decreasing in $\iota'y / \|y\|_2$ could reasonably qualify.
			\end{exm}
			
			\begin{exm}[Conformal inference with e-values]
				Continuing the setup from Example \ref{exm:conformal}, if $T$ is a non-negative test statistic that only depends on the final element, then $T((Y^n, y^*))/\mathbb{E}_{\overline{G}}  T(\overline{G}(Y^n, y^*))$ is an exact e-value for conformal inference.
			\end{exm}

		\subsection{Obtaining the normalization constant and Monte Carlo group invariance e-values}
			The main computational challenge with generic e-values for group invariance is the normalization constant $\mathbb{E}_{\overline{G}} T(\overline{G}Y)$: as the group $\mathcal{G}$ is often large, averaging $T(GY)$ over all $G$ may not be computationally feasible.
			However, the normalization constant can be estimated.
			
			One simple idea is to use a Monte Carlo approach by replacing $\overline{G}$ with a random variable $\overline{G}^M$ that is uniformly distributed on a collection $(\overline{G}^{(1)}, \overline{G}^{(2)}, \dots, \overline{G}^{(M)})$ of $M \geq 1$ mutually independent and identically distributed copies of $\overline{G}$, independent from $Y$.
			Writing $\overline{G}^{(0)} = I$, this yields the `Monte Carlo group invariance e-value'
			\begin{align*}
				\varepsilon_T^M(y) = \frac{T(y)}{\frac{1}{M + 1} \sum_{i = 0}^M T(\overline{G}^{(i)} y)},
			\end{align*}
			and $\varepsilon_T^M(y) = 1$ if the denominator is zero, which can only happen if $T(y) = 0$.
			
			This  e-value is exact in expectation over the Monte Carlo sample, as captured in Theorem \ref{thm:monte_carlo}.
			The proof can be found in Appendix \ref{appn:proof_monte_carlo}, and relies on establishing the exchangeability of $T(\overline{G}^{(0)} Y), \dots, T(\overline{G}^{(M)} Y)$ under the null hypothesis, and then applying Theorem \ref{thm:essentially_complete}.\footnote{We thank an anonymous referee for suggesting this proof strategy.}
			
			\begin{thm}\label{thm:monte_carlo}
				The Monte Carlo e-value $\varepsilon_T^M$ is exact in expectation over the Monte Carlo samples: $\mathbb{E}_{\overline{G}^{(1)}, \dots, \overline{G}^{(M)}} \mathbb{E}^{\textnormal{Unif}(O)}[\varepsilon_T^M] = 1$, for every orbit $O \in \mathcal{Y} / \mathcal{G}$.
			\end{thm}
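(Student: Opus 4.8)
The plan is to recognize the Monte Carlo e-value, evaluated at $\overline{G}y$ and averaged over all the Haar draws, as an ordinary exact e-value for a \emph{permutation} group acting on the vector of test-statistic values, and then to invoke Theorem \ref{thm:essentially_complete}. Fix $y \in \mathcal{Y}$ and write $Y := \overline{G}y$, so that $Y \sim \textnormal{Unif}(O_y)$, and set $Y_i := \overline{G}^{(i)}Y$ for $i = 0,1,\dots,M$ with $\overline{G}^{(0)} = I$, so $Y_0 = Y$. Collect $S := (T(Y_0),\dots,T(Y_M)) \in \mathbb{R}_{\ge 0}^{M+1}$, let $\mathfrak{P}_{M+1}$ act on $\mathbb{R}^{M+1}$ by permuting coordinates, and let $\tilde{T}(s) := s_0$ be the projection onto the first coordinate. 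Since $\mathbb{E}_{\overline{P}}[\tilde{T}(\overline{P}s)] = \tfrac{1}{M+1}\sum_{i=0}^M s_i$ for $\overline{P} \sim \textnormal{Unif}(\mathfrak{P}_{M+1})$, the induced e-value of the form \eqref{eq:exact_e_form} is exactly $\varepsilon_{\tilde{T}}(S) = \varepsilon_T^M(\overline{G}y)$. The target quantity is then $\mathbb{E}[\varepsilon_{\tilde{T}}(S)]$ over the joint law of $S$, and it suffices to establish (i) that $S$ is exchangeable and (ii) that $\varepsilon_{\tilde{T}}$ is exact for $\mathfrak{P}_{M+1}$ invariance.

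The crux is step (i), which I would establish by comparing two constructions. In the first (the one above) $Y_0 = \overline{G}y$ and $Y_i = \overline{G}^{(i)}\overline{G}y$, where $\overline{G},\overline{G}^{(1)},\dots,\overline{G}^{(M)}$ are i.i.d.\ Haar. In the second, I introduce a \emph{phantom} i.i.d.\ Haar element $\overline{G}^{(0)}$ and set $Z_i := \overline{G}^{(i)}\overline{G}y$ for all $i = 0,\dots,M$. Conditionally on $\overline{G}$, the variables $Z_0,\dots,Z_M$ are i.i.d.\ (each an independent Haar element applied to the fixed point $\overline{G}y$), hence exchangeable, and averaging over $\overline{G}$ preserves this, so $(Z_0,\dots,Z_M)$ is exchangeable. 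It then remains to show the two constructions share a law. Conditioning on $\overline{G}^{(0)} = a$ and setting, coordinatewise, $\tilde{G} := a\overline{G}$ and $\tilde{G}^{(i)} := \overline{G}^{(i)}a^{-1}$ for $i \ge 1$, left- and right-invariance of the Haar measure give that $\tilde{G},\tilde{G}^{(1)},\dots,\tilde{G}^{(M)}$ are again i.i.d.\ Haar, while $Z_0 = a\overline{G}y = \tilde{G}y$ and $Z_i = \overline{G}^{(i)}a^{-1}a\overline{G}y = \tilde{G}^{(i)}\tilde{G}y$. Thus the conditional law of $(Z_0,\dots,Z_M)$ given $\overline{G}^{(0)} = a$ coincides with the law of $(Y_0,\dots,Y_M)$ for every $a$, so the two constructions are equal in distribution and $(Y_0,\dots,Y_M)$, hence $S$, is exchangeable. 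This reparametrization, showing the phantom element can be absorbed without changing the law, is the step I expect to require the most care.

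For step (ii), exchangeability of $S$ means its law is $\mathfrak{P}_{M+1}$ invariant, and $\tilde{T} \ge 0$ is integrable on each orbit, so Theorem \ref{thm:essentially_complete} applies with $\mathcal{G} = \mathfrak{P}_{M+1}$: the e-value $\varepsilon_{\tilde{T}}$ is exact for exchangeability, whence $\mathbb{E}[\varepsilon_{\tilde{T}}(S)] = 1$. Unwinding the definitions via Fubini, this is precisely $\mathbb{E}_{\overline{G}^{(1)},\dots,\overline{G}^{(M)}}\mathbb{E}_{\overline{G}}[\varepsilon_T^M(\overline{G}y)] = 1$, as claimed. The only delicate point is the degenerate orbit $\{s : \sum_i s_i = 0\}$, i.e.\ the event $\{T(Y_i) = 0 \text{ for all } i\}$, on which the ratio is of the form $0/0$; this is handled by the convention $0/0 = 0$ together with the standing positivity assumption $0 < \mathbb{E}_{\overline{G}}T(\overline{G}y)$ on $T$, and does not affect the symmetrization underlying steps (i) and (ii).
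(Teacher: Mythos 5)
Your proposal is correct and follows essentially the same route as the paper: show that the tuple $(T(\overline{G}y), T(\overline{G}^{(1)}\overline{G}y), \dots, T(\overline{G}^{(M)}\overline{G}y))$ is exchangeable via Haar invariance, then recognize $\varepsilon_T^M(\overline{G}y)$ as an exact permutation e-value of the form \eqref{eq:exact_e_form} applied to that tuple. The paper establishes exchangeability slightly more directly---by showing $(\overline{G}, \overline{G}^{(1)}\overline{G}, \dots, \overline{G}^{(M)}\overline{G})$ is i.i.d.\ Haar, using that $(\overline{G}^{(i)}\overline{G})_{i \ge 1}$ is independent of $\overline{G}$---rather than via your phantom-element reparametrization, but both arguments rest on the same left- and right-invariance facts, and your explicit treatment of the degenerate all-zero orbit is, if anything, more careful than the paper's.
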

			
			While the resulting e-value is exact in expectation for any number of Monte Carlo draws $M$, a larger number of draws should generally improve the estimation of the normalization constant $\mathbb{E}_{\overline{G}} T(\overline{G}Y)$, and thereby reduce the sensitivity of $\varepsilon_T^M$ to the drawn sample.
			
			\begin{rmk}[Sequential Monte Carlo e-values]
				\citet{fischer2024sequential} study sequential sampling in the context of traditional Monte Carlo group invariance tests, where the number of draws $M$ is  a stopping time.
				While the underlying idea should generalize, the approach of \citet{fischer2024sequential} is tailored towards making a binary decision, and we believe it would require substantial modification.
				Indeed, \citet{stoepker2024inference} show, in the context of p-values, that a much larger number of Monte Carlo draws is desirable for a continuous measure of evidence when compared to a binary decision.
			\end{rmk}

			\begin{rmk}[Subgroups]
				One may also replace $\overline{G}$ in \eqref{eq:exact_e_form} with a random variable that is uniform on a compact subgroup \citep{chung1958randomization}.
				As invariance under $\mathcal{G}$ implies invariance under all its subgroups, this guarantees the resulting e-value is valid.
				\cite{koning2023more} note that we can strategically select the subgroup based on the test statistic and alternative to increase power.
				\cite{koning2023power} observes that this can even yield methods that are more powerful than using the entire group $\mathcal{G}$.
				Ideas to go beyond uniform distributions on subgroups appear in \citet{hemerik2018exact} and \citet{ramdas2023permutation}.
			\end{rmk}

	\section{Optimal e-values for group invariance}\label{sec:optimal}
		While the approach in Section \ref{sec:ph_gi} is flexible, it does not instruct us how to construct a \emph{good} e-value, which is the topic of this section.

		\subsection{Background: the Neyman-Pearson lemma for e-values}\label{sec:objectives}
			We follow the perspective presented in \citet{koning2024continuous}, which unifies optimal classical testing and optimal e-values.
			The idea is to maximize the expected utility $U$ of the e-value, under an alternative $\mathbb{Q}$.
			
			Maximizing expected utility nests classical power at level $\alpha > 0$ for the `Neyman--Pearson' utility function $U_\alpha(x) = x \wedge 1/\alpha$.
			Indeed, let $\mathbb{P}$ be a simple null hypothesis, and let $\mathbb{Q}_a$ and $\mathbb{Q}_s$ denote the absolutely continuous and singular parts of $\mathbb{Q}$ with respect to $\mathbb{P}$.
			Then, we recover the Neyman-Pearson lemma, since an optimal e-value under $U$ is
			\begin{align}\label{eq:NP}
				\varepsilon^*
					= 
					\begin{cases}
						1/\alpha, &\text{ if } d\mathbb{Q}_a/d\mathbb{P} > c_\alpha, \\
						k, &\text{ if } d\mathbb{Q}_a/d\mathbb{P} = c_\alpha, \\
						0 &\text{ if } d\mathbb{Q}_a/d\mathbb{P}< c_\alpha,
					\end{cases}
					\quad \mathbb{P}\textnormal{-a.s.},
			\end{align}
			for suitable constants $c_\alpha \geq 0$ and $k \in [0, 1/\alpha]$, choosing $\varepsilon^* = 1/\alpha$ (or $\varepsilon^* = \infty$), $\mathbb{Q}_s$-almost surely.
			Here, $\varepsilon^* = 1/\alpha$ corresponds to a rejection at level $\alpha$ and $\varepsilon^* = 0$ to a non-rejection.
			
			The sole innovation of the e-value perspective here is to directly interpret $\varepsilon^* = k$ as evidence against the null, instead of the classical interpretation as an instruction to reject with probability $\alpha\varepsilon^* \in [0,1]$.
			Since $\varepsilon^* = k$ typically happens with low or zero probability, the e-value has little to offer if we stick to the Neyman-Pearson utility.
			For this reason, we \emph{must} let go of the traditional notion of power if we wish to truly use the potential of e-values.

			Lemma \ref{lem:U-optimal}, paraphrased from \citet{koning2024continuous}, may be viewed as a `Neyman--Pearson lemma' for e-values.
			There, we consider a more general non-decreasing utility function $U : [0, \infty] \to [-\infty, \infty]$ with $U(\infty) = \limsup_{x \to \infty} U(x)$.
			We say that $\varepsilon^*$ is $U$-optimal if 
			\begin{align*}
				\mathbb{E}^{\mathbb{Q}_a}[U(\varepsilon^*)] \geq \mathbb{E}^{\mathbb{Q}_a}[U(\varepsilon)],
			\end{align*}
			for all $\varepsilon \in \mathcal{E}(\mathbb{P}) := \{\varepsilon : \textnormal{measurable and }\mathbb{E}^{\mathbb{P}}[\varepsilon] \leq 1\}$, and imposing $\varepsilon = \infty$, $\mathbb{Q}_s$-almost surely, assuming that both sides are well-defined.
			Note that we may restrict to $\mathcal{E}_1(\mathbb{P}) := \{\varepsilon \in \mathcal{E}(\mathbb{P}): \mathbb{E}^{\mathbb{P}}[\varepsilon] = 1\}$, since valid e-values outside this class can be weakly improved.
			For completeness, we include a simple proof in Appendix \ref{proof:U-optimal}.
			
			\begin{lem}[\citet{koning2024continuous}]\label{lem:U-optimal}
				Let $U : [0, \infty] \to [-\infty, \infty]$ be non-decreasing.
				Fix $\lambda \geq 0$ and let
				\begin{align*}
					\varepsilon_\lambda(y) \in \textnormal{argmax}_{x \in [0, \infty]} \frac{d\mathbb{Q}_a}{d\mathbb{P}}(y) U(x) - \lambda x, \quad \mathbb{P}\textnormal{-a.s.},
				\end{align*}
				and $\varepsilon_\lambda = \infty$, $\mathbb{Q}_s$-a.s.
				Suppose that $\varepsilon_{\lambda}\in \mathcal{E}_1(\mathbb{P})$.
				Then $\varepsilon_{\lambda}$ is $U$-optimal.
			\end{lem}
			
			Here, we take $\frac{d\mathbb{Q}_a}{d\mathbb{P}}(y) U(x) - \lambda  x$ at $x = \infty$ as the limsup.
			If $U$ is concave, we can write
			\begin{align*}
				\lambda \Bigg/\frac{d\mathbb{Q}_a}{d\mathbb{P}}(y) \in \partial U(\varepsilon_\lambda(y)), \quad \mathbb{Q}_a\textnormal{-a.s.},
			\end{align*}
			on $\{\frac{d\mathbb{Q}_a}{d\mathbb{P}} > 0\}$. 
			If $U$ is differentiable with invertible derivative $U'$, then $\varepsilon_\lambda = (U')^{-1}(\lambda /\frac{d\mathbb{Q}_a}{d\mathbb{P}})$.
			
			The lemma reduces the problem of finding an optimal e-value to finding the constant $\lambda$.
			To show the existence of such $\lambda$, we can assume $U$ is upper semicontinuous to allow for a measurable selection of $\varepsilon_\lambda$, and assume sub-log growth of the utility: $U(y) - U(x) \leq K \log(y/x)$ for $0 < x < y < \infty$.
			A weak condition for the objective to be well-defined at $\varepsilon_\lambda$ is $U(x) > -\infty$ for some $x \in [0, \infty)$.
			
		\subsection{Characterizing e-values and optimal e-values for group invariance}
			While optimal e-values for simple null hypotheses are well-understood, optimal e-values for composite hypotheses are significantly more challenging to characterize.
			In Theorem~\ref{thm:characterize_optimal}, we present the key result for deriving optimal e-values for $\mathcal{G}$ invariance.
			It shows that if an e-value is `locally' valid and optimal on every orbit, then it is valid for $\mathcal{G}$ invariance and `globally' optimal.
			Its proof is found in Appendix \ref{proof:characterize_optimal}.
			
			The result assumes that we are maximizing an objective $K$ that can be decomposed into local objectives $K_O$ on each orbit.
			This condition seems quite mild, and holds for the expected utility-type objectives discussed in Section \ref{sec:objectives}.
			
			In order to present the result, let $F_+^\mathcal{A}$ denote the set of $[0, \infty]$-valued measurable functions on the space $\mathcal{A}$.
			For a measurable subspace $\mathcal{B} \subseteq \mathcal{A}$, we use $f_{|\mathcal{B}}$ to denote the restriction of $f \in F_+^{\mathcal{A}}$ to $\mathcal{B}$.
			Note that $f_{|\mathcal{B}} \in F_+^{\mathcal{B}}$.
			We use  $K : F_+^{\mathcal{Y}} \to [0, \infty]$ to denote the aggregate objective, which we define by
			\begin{align}\label{eq:Psi_aggr}
				K(f) 
					= \Psi\left(\bigl(K_O\bigl(f_{|O}\bigr)\bigr)_{O \in \mathcal{Y} / \mathcal{G}}\right),
			\end{align}
			where $K_O : F_+^O \to [0, \infty]$ is an orbit-level objective, and the aggregating function $\Psi : [0, \infty]^{\mathcal{Y} / \mathcal{G}} \to [0, \infty]$ is non-decreasing in each of its inputs.
				
			\begin{thm}[Local optimality $\implies$ global optimality]\label{thm:characterize_optimal}
				Let $\varepsilon^* \in F_+^\mathcal{Y}$. Suppose that for each $O \in \mathcal{Y} / \mathcal{G}$:
				\begin{itemize}
					\item[(i)] $\varepsilon_{|O}^*$ is a valid e-value for $\mathrm{Unif}(O)$,
					\item[(ii)] $K_O\left(\varepsilon_{|O}^*\right) \geq K_O(\varepsilon)$ for \emph{every} e-value $\varepsilon \in F_+^O$ that is valid for $\mathrm{Unif}(O)$.
				\end{itemize}
				Then $\varepsilon^*$ is valid for $\mathcal{G}$ invariance and $K$-optimal: $K(\varepsilon^*) \geq K(\varepsilon)$ for every e-value $\varepsilon$ that is valid for $\mathcal{G}$ invariance.
			\end{thm}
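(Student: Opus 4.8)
The plan is to collapse the whole statement onto the orbit-wise validity characterization of Theorem~\ref{thm:e-values}, so that both the validity and the optimality of $\varepsilon^*$ become consequences of facts that live separately on each orbit. I would begin with validity. Fix an orbit $O \in \mathcal{Y}/\mathcal{G}$. Because $\textnormal{Unif}(O)$ is supported on $O$, integrating $\varepsilon^*$ against it only sees the restriction, so $\mathbb{E}^{\textnormal{Unif}(O)}\varepsilon^* = \mathbb{E}^{\textnormal{Unif}(O)}\varepsilon_{|O}^* \leq 1$ by hypothesis (i). As this holds for every orbit, Theorem~\ref{thm:e-values}(i) immediately delivers that $\varepsilon^*$ is a valid e-value for $\mathcal{G}$ invariance.

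Next I would establish optimality. Let $\varepsilon \in F_+^{\mathcal{Y}}$ be an arbitrary e-value valid for $\mathcal{G}$ invariance. By Theorem~\ref{thm:e-values}(i) applied in the other direction, $\mathbb{E}^{\textnormal{Unif}(O)}\varepsilon \leq 1$ for every orbit $O$, and since this integral equals $\mathbb{E}^{\textnormal{Unif}(O)}\varepsilon_{|O}$, the restriction $\varepsilon_{|O}$ is itself a valid e-value for $\textnormal{Unif}(O)$. This is precisely the class of competitors over which hypothesis (ii) quantifies, so applying (ii) with $\varepsilon_{|O}$ gives $K_O(\varepsilon_{|O}^*) \geq K_O(\varepsilon_{|O})$ for every $O$. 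Feeding these orbit-wise inequalities into the aggregate objective~\eqref{eq:Psi_aggr} and invoking the monotonicity of $\Psi$ yields
\begin{align*}
K(\varepsilon^*) = \Psi\bigl((K_O(\varepsilon_{|O}^*))_{O}\bigr) \geq \Psi\bigl((K_O(\varepsilon_{|O}))_{O}\bigr) = K(\varepsilon),
\end{align*}
which is the claimed $K$-optimality.

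The conceptual engine behind both halves is that Theorem~\ref{thm:e-values} makes ``valid for $\mathcal{G}$ invariance'' and ``valid for $\textnormal{Unif}(O)$ on every orbit'' the \emph{same} condition: this simultaneously lets me assemble the locally valid $\varepsilon^*$ into a globally valid e-value and lets me decompose an arbitrary global competitor into local competitors to which (ii) applies. The step I expect to demand the most care is the final monotonicity inequality for $\Psi$. Since the orbit space $\mathcal{Y}/\mathcal{G}$ may be uncountable, the phrase ``non-decreasing in each of its inputs'' must be read as monotonicity under the pointwise (product) partial order, namely $(a_O)_O \geq (b_O)_O$ coordinatewise implies $\Psi((a_O)_O) \geq \Psi((b_O)_O)$; this is the property actually used, and while it is strictly stronger than literal coordinatewise monotonicity on an uncountable index, it holds for all the concrete aggregators of interest, such as integrals or suprema of the local objectives over orbits. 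A secondary routine point is measurability: that the restrictions $\varepsilon_{|O}^*$ and $\varepsilon_{|O}$ genuinely lie in $F_+^{O}$, which is immediate because restricting a measurable $[0,\infty]$-valued function to a measurable orbit is measurable.
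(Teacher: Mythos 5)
Your proof is correct and follows essentially the same route as the paper's: validity via the orbit-wise characterization of Theorem~\ref{thm:e-values}, then restricting an arbitrary globally valid competitor to each orbit, applying hypothesis (ii) orbit by orbit, and concluding through the monotonicity of $\Psi$. Your added remarks on reading monotonicity as coordinatewise dominance over a possibly uncountable orbit index and on measurability of restrictions are sensible refinements, but the argument is the same as the paper's.
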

			
			\begin{rmk}[Optimal uniformly in aggregation functions]\label{rmk:uniform_in_aggregators}
				An implication of Theorem \ref{thm:characterize_optimal} is that an e-value that is optimal for every $K_O$, $O \in \mathcal{Y} / \mathcal{G}$, is also optimal for \emph{any} choice of aggregation function $\Psi$.
				We exploit this idea in Section \ref{sec:optimal_orbit}.
			\end{rmk}
			
			\begin{rmk}[Beyond group invariance]
				The result goes through if the partitioning of the sample space is not generated by a group, and the null hypothesis is simply some probability mixture over known distributions on each of the subsets in the partition.
				Our group structure naturally generates such a partition, where the distribution on each subset in the partition is uniform.
				This approach may be interesting for deriving optimal e-values in other contexts.
			\end{rmk}

		\subsection{Expected utility-optimal e-values for group invariance: alternative on sample space}\label{sec:optimal_sample_space}
			We now apply Theorem~\ref{thm:characterize_optimal} to derive expected-utility optimal e-values for group invariance.
			A remarkable feature of these optimal e-values is that they may be expressed in terms of $\mathbb{Q}$ and its symmetrization $\overline{\mathbb{Q}}$.
			This is a consequence of Lemma \ref{lem:conditional_unconditional_RN}, which shows that the likelihood ratio $d\mathbb{Q} / d\overline{\mathbb{Q}}$ coincides with the conditional likelihood ratio.

			In this section, let $\mathbb{Q} = \mathbb{Q}_a + \mathbb{Q}_s$ be the Lebesgue decomposition of $\mathbb{Q}$ with respect to $\overline{\mathbb{Q}}$.
			Moreover, let $\pi(y) = O_y$ and define the shared probability $\mu = \mathbb{Q} \circ \pi^{-1} = \overline{\mathbb{Q}} \circ \pi^{-1}$ on $\mathcal{Y} / \mathcal{G}$.
	
			\begin{lem}\label{lem:conditional_unconditional_RN}
				The function $y \mapsto \frac{d\mathbb{Q}_a(\cdot \mid O_y)}{d\textnormal{Unif}(O_y)}(y)$ is a version of $\frac{d\mathbb{Q}_a}{d\overline{\mathbb{Q}}}$, for some subprobability kernel $\mathbb{Q}_a(\cdot \mid O)$ satisfying $\mathbb{Q}_a(\cdot) = \int \mathbb{Q}_a(\cdot \mid O) d\mu(O)$ and $\mathbb{Q}_a(\cdot \mid O) \ll \textnormal{Unif}(O)$, $\mu$-a.s.
			\end{lem}			
			
			Combining this lemma with Theorem \ref{thm:characterize_optimal} and Lemma \ref{lem:U-optimal} yields Theorem \ref{thm:utility-optimal-invariant}, which characterizes expected-utility-optimal e-values for group invariance.
			Its proof is in Appendix \ref{proof:utility-optimal-invariant}.
			
			\begin{thm}[$U$-optimal $\mathcal G$-invariant e-value]\label{thm:utility-optimal-invariant}
				Assume $U : [0, \infty] \to [-\infty, \infty]$ is non-decreasing.
				Let $\varepsilon^U$ be such that for every orbit $O$:
				\begin{itemize}
					\item[(i)]  $\varepsilon^U$ is an exact e-value for $\textnormal{Unif}(O)$,
					\item[(ii)] 	$\varepsilon^U(y) \in \textnormal{argmax}_{x \in [0, \infty]} \frac{d\mathbb{Q}_a}{d\overline{\mathbb{Q}}}(y) U(x) - \lambda_O x$, $\textnormal{Unif}(O)$-a.s., for some $\lambda_O \geq 0$,
				\end{itemize}
				and $\varepsilon^U = \infty$, $\mathbb{Q}_s$-a.s.
				Then $\varepsilon^U$ is a exact e-value for $\mathcal{G}$ invariance and $U$-optimal against $\mathbb{Q}$.
			\end{thm}
			
			We now apply this result to derive several corollaries for different kinds of utility functions, starting with the Neyman--Pearson utility function $U_\alpha(x) = x \wedge 1/\alpha$, for some fixed $\alpha \in (0, 1]$.
			This recovers the main result of \citet{lehmann1949theory}.
			One difference is that they seem to implicitly assume the existence of a $\mathcal{G}$ invariant reference measure.
			Moreover, we permit $\varepsilon = \infty$ on $\mathbb{Q}_s$, but this may be replaced by $\varepsilon = 1/\alpha$.
			This e-value corresponds to the classical group invariance test with the test statistic $T(y) = \frac{d\mathbb{Q}_a}{d\overline{\mathbb{Q}}}(y)$.

			\begin{cor}[Neyman--Pearson]\label{cor:lehman-stein}
				Consider the utility $U(x) = x \wedge 1/\alpha$, $\alpha \in (0, 1]$.
				Suppose that for every orbit $O$, there exist constants $\lambda_O \geq 0$ and $k_O \in [0, 1/\alpha]$ such that 
				\begin{align*}
					\varepsilon^{\textnormal{NP}}
						=
						\begin{cases}
							0, &\textnormal{ if }\frac{d\mathbb{Q}_a}{d\overline{\mathbb{Q}}} < \lambda_O, \\
							k_O, &\textnormal{ if }\frac{d\mathbb{Q}_a}{d\overline{\mathbb{Q}}} = \lambda_O, \\
							1/\alpha, &\textnormal{ if }\frac{d\mathbb{Q}_a}{d\overline{\mathbb{Q}}} > \lambda_O, \\
						\end{cases}
				\end{align*}
				$\textnormal{Unif}(O)$-a.s. and $\varepsilon^{\textnormal{NP}}$ is an exact e-value for $\textnormal{Unif}(O)$, and $\varepsilon^{\textnormal{NP}} = \infty$, $\mathbb{Q}_s$-a.s.
				Then $\varepsilon^{\textnormal{NP}}$ is Neyman--Pearson optimal.
			\end{cor}
			
			Next, we consider the popular log-utility, for which we can easily explicitly characterize the normalization constant.
			If  $\mathbb{Q} \ll \overline{\mathbb{Q}}$ then the normalization constant equals 1 so that $\varepsilon^\textnormal{log} = d\mathbb{Q} / d\overline{\mathbb{Q}}$.
			This happens, for example, if the group is finite as this means $\overline{\mathbb{Q}}$ is a finite average of measures, one of which is $\mathbb{Q}$.
			\begin{cor}[Log-optimal]\label{cor:log-optimal}
				Consider the utility $U = \log$.
				A $U$-optimal e-value is
				\begin{align*}
					\varepsilon^{\textnormal{log}}(y)
						= \left. \frac{d\mathbb{Q}_a}{d\overline{\mathbb{Q}}}(y) \middle/ \mathbb{E}_{\overline{G}}\left[\frac{d\mathbb{Q}_a}{d\overline{\mathbb{Q}}}(\overline{G}y)\right]\right.,
				\end{align*}	
				on orbits with $\mathbb{E}_{\overline{G}}\left[\frac{d\mathbb{Q}_a}{d\overline{\mathbb{Q}}}(\overline{G}y)\right] > 0$, $\varepsilon^{\textnormal{log}} = 1$ on other orbits, and $\varepsilon^{\textnormal{log}}(y) = \infty$, $\mathbb{Q}_s$-a.s.
			\end{cor}

			Log-utility inherits the simple characterization of the normalization constant from the generalized-means, or `power utility' $U_h(x) = (x^h - 1) / h$ for $h \neq 0$ and $U_0(x) = \log(x)$, $h < 1$.
			This provides a parameter $h$ which tunes the `riskiness' of the e-value, where $h \to 1$ yields an all-or-nothing-style e-value, whereas $h \to -\infty$ yields the constant e-value.
			
			\begin{cor}[Generalized-mean-optimal]\label{cor:h-optimal}
				Assume $\mathbb{E}_{\overline{G}}\left[\left\{d\mathbb{Q}_a/d\overline{\mathbb{Q}}(\overline{G}y)\right\}^{\frac{1}{1 - h}}\right] < \infty$.
				Then, a $U_h$-optimal e-value is given by
				\begin{align*}
					\varepsilon^{(h)}(y)
						= \left. \left(\frac{d\mathbb{Q}_a}{d\overline{\mathbb{Q}}}(y)\right)^{\frac{1}{1 - h}} \middle/ \mathbb{E}_{\overline{G}}\left[\left(\frac{d\mathbb{Q}_a}{d\overline{\mathbb{Q}}}(\overline{G}y)\right)^{\frac{1}{1 - h}}\right]\right.,
				\end{align*}
				on orbits with $\mathbb{E}_{\overline{G}}\left[\left\{d\mathbb{Q}_a/d\overline{\mathbb{Q}}(\overline{G}y)\right\}^{\frac{1}{1 - h}}\right] > 0$, $\varepsilon^{(h)} = 1$ on other orbits and $\varepsilon^{(h)} = \infty$, $\mathbb{Q}_s$-a.s.
			\end{cor}

			\begin{rmk}[Link to `generic' e-values]
				These corollaries give guidance to the choice of test statistic $T$ for the `generic' e-value presented in Section \ref{sec:e-value_generic}.
				Indeed, $T \propto d\mathbb{Q}_a/d\overline{\mathbb{Q}}$ and $T \propto (d\mathbb{Q}_a/d\overline{\mathbb{Q}})^{\frac{1}{1-h}}$ yield log-optimal and generalized-mean optimal e-values.
			\end{rmk}
			
			In case a $\sigma$-finite $\mathcal{G}$ invariant reference measure $\mathbb{H} \gg \overline{\mathbb{Q}}, \mathbb{Q}$ is available, we can replace $\overline{\mathbb{Q}}$ by $\mathbb{H}$, even if it is not a probability measure.
			This is convenient, because densities are often presented with respect to some dominating invariant reference measure, such as the Lebesgue measure.
			We leverage this in Appendix \ref{sec:gaussian}.
			
			\begin{prp}\label{prp:invariant_reference}
				In Theorem~\ref{thm:utility-optimal-invariant} and its corollaries we may replace $\overline{\mathbb{Q}}$ by $\mathbb{H} \gg \overline{\mathbb{Q}}, \mathbb{Q}$.
			\end{prp}
			\begin{proof}
				As both $\overline{\mathbb{Q}}$ and $\mathbb{H}$ are $\mathcal{G}$-invariant, $\frac{d\overline{\mathbb{Q}}}{d\mathbb{H}}$ is $\mathcal{G}$-invariant, thus constant on each orbit and so absorbed into the multiplier $\lambda_O$.
			\end{proof}

		\subsection{Expected utility-optimal e-values for group invariance: alternatives on orbits}\label{sec:optimal_orbit}
			In Section~\ref{sec:optimal_sample_space}, we specified a single alternative $\mathbb{Q}$ on the entire sample space $\mathcal{Y}$.
			We may also turn Theorem~\ref{thm:characterize_optimal} around by specifying an alternative $\mathbb{Q}^O$ on each orbit,  resulting in an e-value that is optimal \emph{uniformly in every marginal distribution over orbits}, following Remark~\ref{rmk:uniform_in_aggregators}.
			
			We present this in Corollary \ref{cor:uniform_over_marginals}, which follows from observing in Lemma \ref{lem:conditional_unconditional_RN} that $d\mathbb{Q}_a /d\overline{\mathbb{Q}}$ does not depend on the marginal distributions over orbits.
			Here, we define $\mathbb{Q}^\mu := \int \mathbb{Q}^O d\mu(O)$, for a probability measure $\mu$ on $\mathcal{Y}/\mathcal{G}$.

			\begin{cor}[$U$-optimality uniformly over marginals]\label{cor:uniform_over_marginals}
				If we replace $d\mathbb{Q}_a / d\overline{\mathbb{Q}}$ in Theorem \ref{thm:utility-optimal-invariant} by $d\mathbb{Q}_a^O / d\textnormal{Unif}(O)$, then $\varepsilon^U$ is exact and $U$-optimal against $\mathbb{Q}^\mu$, for every $\mu$.
			\end{cor}

			While specifying an alternative $\mathbb{Q}^O$ on each orbit may sound like an arduous exercise, the following example shows its practical relevance.
			We put this example to practice in Section \ref{sec:hot_hand}, where we apply it to the real-world experiment of \citet{gilovich1985hot}.
			
			In practice, we need only formulate the conditional alternative on the observed orbit.
			More precisely, when we receive our data $Y$, we may first classify its orbit $O_Y$, then choose an alternative $\mathbb{Q}^{O_Y}$ based on $O_Y$ (not based on $Y$ itself), and then compute the e-value using $Y$.
			
			\begin{exm}[Hot hand]\label{exm:hot_hand}
				The hot hand comes from basketball.
				It describes a momentum effect, in which a player hitting a shot increases their probability of hitting subsequent shots.
				It was first statistically popularized by \citet{gilovich1985hot}.
				It is often examined by testing whether a sequence of shot outcomes $Y$ (hit/miss) is exchangeable against some sequential-dependence alternative that describes the hot hand effect \citep{miller2018surprised, ritzwoller2022uncertainty}.
				
				Permuting such a shot sequence exactly fixes the number of hits (\#hit) and misses (\#miss), so that the permutation orbits may be labeled by (\#hit, \#miss).
				If we were to follow Section \ref{sec:optimal_sample_space}, we would need to specify a marginal alternative over the orbits and so over (\#hit, \#miss).
				This is hard, as it requires knowledge of the skill of the player, opponents and teammates.
				
				The strategy in Corollary \ref{cor:uniform_over_marginals} relieves us from specifying an alternative over (\#hit, \#miss): we merely need to specify the conditional distribution of the order of the hits and misses under the hot hand, given the statistic (\#hit, \#miss).
				Here, we may even decide to use the number of hits and misses to influence the strength of the hot hand, as a proxy for skill.
			\end{exm}

		\subsection{Optimal e-values for group invariance: objective on group}\label{sec:optimal_on_group}
			A final strategy is to specify an alternative $\mathbb{Q}^{\mathcal{G}}$ on the group $\mathcal{G}$ itself and test this against the Haar measure $\textnormal{Unif}(\mathcal{G})$ on the group.
			Such an e-value is valid for $\mathcal{G}$ invariance if and only if $\mathbb{E}_{\overline{G}}[\varepsilon(\overline{G})] \leq 1$.
			This may sound overly exotic, but this actually underlies popular rank tests and sign tests.
			
			To apply this idea, we may use an inversion kernel, as discussed in Section \ref{sec:inversion_kernels}.
			In particular, we may derive an optimal e-value $\varepsilon_{\mathcal{G}}^* : \mathcal{G} \to [0, \infty]$ for measuring evidence against the Haar measure, and evaluate it on our data by passing it through an inversion kernel $\varepsilon_{\mathcal{G}}^*(\gamma(Y))$.
			 
			A concrete example is given by the rank statistic, which is in bijection with the group of permutations if we ignore ties.
			This means we may reimagine the function $\text{Rank} : \mathbb{R}^M \to \{1, \dots, M\}^M$ as a function that maps an observation on $\mathbb{R}^M$ to a group of permutations $\text{Rank} : \mathbb{R}^M \to \mathcal{G}$.
			Under exchangeability (permutation invariance), the distribution of the resulting rank is uniform (Haar) on the set of possible ranks.
			An alternative on the group corresponds to any other distribution on the ranks.
			If desired, such an alternative on the group may be obtained by pushing forward an alternative on $\mathcal{Y}$ to $\mathcal{G}$ through the inversion kernel.
				
	\section{E-processes for group invariance}\label{sec:e-processes}		
		\subsection{Characterizing e-processes for group invariance}\label{sec:characterizing_e-processes}
			In this section, we describe sequential gathering of evidence against group invariance through e-processes.
			
			In the sequential setting, we equip our sample space $(\mathcal{Y}, \mathcal{I})$ with a filtration $(\mathcal{I}_n)_{n \geq 0}$, $\mathcal{I}_n \subseteq \mathcal{I}$. 
			An e-process for hypothesis $H$ is a stochastic process $(\varepsilon_n)_{n \geq 0}$, with $\varepsilon_n : \mathcal{Y} \to [0, \infty]$, that is adapted to a filtration $(\mathcal{I}_n)_{n \geq 0}$.
			We say an e-process is anytime valid if $\varepsilon_\tau$ is a valid e-value for every stopping time $\tau$ adapted to $(\mathcal{I}_n)_{n \geq 0}$.
			Without loss of generality, we consider $\mathcal{I}_0 = \{\emptyset, \mathcal{Y}\}$, so that we may impose $\varepsilon_0 = 1$.
			
			In Theorem \ref{thm:e-process}, we characterize e-processes for $\mathcal{G}$ invariance.
			This result may be viewed as a sequential analogue of Theorem \ref{thm:e-values}, and follows directly from it.
			
			\begin{thm}[Characterizing e-processes]\label{thm:e-process}
				$(\varepsilon_n)_{n \geq 0}$ is anytime valid for $\mathcal{G}$ invariance if and only if $\varepsilon_\tau$ is valid for $\textnormal{Unif}(O)$, for every orbit $O$ and stopping time $\tau$.
			\end{thm}
			
			The key challenge when constructing e-processes for group invariance is that the sequential analogue of Corollary \ref{cor:conditional_validity} is typically not useful.
			The problem is that the orbit $O_Y$ is typically not measurable along the filtration $(\mathcal{I}_n)_{n \geq 0}$, and only determined by the $\mathcal{I}$-measurable `terminal data' $Y$.
			As a consequence, while Corollary \ref{cor:conditional_validity_e-process} provides a valid characterization, it conditions on information that is unavailable when we need it.
			
			\begin{cor}[Characterizing through conditioning]\label{cor:conditional_validity_e-process}
				$(\varepsilon_n)_{n \geq 0}$ is a valid e-process for $\mathcal{G}$ invariance if and only if $\mathbb{E}^{\textnormal{Unif}(O_Y)}[\varepsilon_\tau] \leq 1$, $O_Y$-a.s.
			\end{cor}

		\subsection{Tracking orbit-wise martingales}	
			As we cannot generally use Corollary \ref{cor:conditional_validity_e-process} to construct an anytime valid e-process, we pass to an equivalent definition: an e-process is anytime valid for a hypothesis $H$ if and only if for every $\mathbb{P} \in H$ it is $\mathbb{P}$-almost surely bounded from above by a $\mathbb{P}$-non-negative martingale starting at 1 \citep{ramdas2022admissible}.
			
			To put this alternative definition into practice, we could track an entire family of such martingales $(\varepsilon_n^{\mathbb{P}})_{\mathbb{P} \in H}$, and take the e-process as some lower bound.
			A lower bound is the greatest measurable lower bound of the family of martingales, assuming it exists (which is not generally guaranteed if $H$ is uncountable and no dominating measure exists).
			
			In case of a large non-parametric hypothesis such as group invariance, this is a large family to track.
			Luckily, Theorem \ref{thm:infimum_of_martingales} shows that we may reduce this to tracking a $\textnormal{Unif}(O)$-martingale $(\varepsilon_n^{O})_{n \geq 0}$ for every orbit $O \in \mathcal{Y} / \mathcal{G}$.
			Its proof is presented in Appendix \ref{proof:infimum_of_martingales}.
			
			\begin{thm}\label{thm:infimum_of_martingales}
				An e-process $(\varepsilon_n)_{n \geq 0}$ is anytime valid if and only if it is bounded by a non-negative $\textnormal{Unif}(O)$-martingale that starts at 1, $\textnormal{Unif}(O)$-a.s., for every orbit $O \in \mathcal{Y} / \mathcal{G}$.
			\end{thm}
			
			\begin{rmk}[Comments on Theorem \ref{thm:infimum_of_martingales}]\label{rmk:remark_on_theorem_infimum_of_martingales}
				If the orbit is known from the start (i.e. there is only one orbit) then we can take our e-process to be a $\textnormal{Unif}(O)$-martingale.
				As being a $\textnormal{Unif}(O)$-martingale does not restrict the behavior of the e-process outside of the orbit $O$, we suggest to impose $\varepsilon_n^O(y) = \infty$ for $y \not\in O$.
				This may be interpreted as `dropping' a martingale from the family as soon as its associated orbit is no longer a candidate to be the eventual orbit of the data, since this means they cannot affect a lower bound.
			\end{rmk}

		\subsection{Constructing orbit-wise martingales out of an e-value for $\mathcal{G}$ invariance}\label{sec:e-process from e-value}
			The decomposition into orbit-wise martingales is in harmony with the orbit-wise optimality theory from Section \ref{sec:optimal} through the Doob martingale strategy of \citet{koning2025sequentializing}.
			
			In particular, we may start by constructing an optimal e-value $\varepsilon$ for group invariance by leveraging Corollary \ref{cor:uniform_over_marginals}: for each orbit $O$, we choose its restriction $\varepsilon_{|O}$ to be valid for $\textnormal{Unif}(O)$ and optimal against $\mathbb{Q}^O$.
			Next, we induce a Doob martingale $(\varepsilon_n^O)_{n \geq 0}$ with $\varepsilon_n^O = \mathbb{E}^{\textnormal{Unif}(O)}[\varepsilon_{|O} \mid \mathcal{I}_n]$, for every orbit $O$.
			An e-process is then valid if it is $\textnormal{Unif}(O)$-a.s. upper bounded by such $\textnormal{Unif}(O)$-martingales.

			Remarkably, this construction builds towards the global e-value $\varepsilon$ in the following sense.
			For every orbit $O$, if $\varepsilon$ is measurable at stopping time $\tau$, then $\varepsilon_{|O}$ is measurable at $\tau$ and so
			\begin{align*}
				\varepsilon_\tau^O
					= \mathbb{E}^{\textnormal{Unif}(O)}[\varepsilon_{|O} \mid \mathcal{I}_\tau]
					= \varepsilon_{|O}, \quad \textnormal{Unif}(O)\textnormal{-a.s}.
			\end{align*}
			As a consequence, if the realized orbit $O_Y$ is $\mathcal{I}_\tau$-measurable then
			\begin{align*}
				\varepsilon_\tau^{O_Y}(Y) = \varepsilon(Y), \quad \textnormal{Unif}(O_Y)\textnormal{-a.s.}.
			\end{align*}
			Example \ref{exm:e-process_toy} illustrates such an e-process.
			
			\begin{exm}[Illustration of e-process]\label{exm:e-process_toy}
				We consider a simple example of a non-trivial e-process, to illustrate the results in this section.
				
				We will sequentially observe a pair of letters, $\mathcal{Y} = \{AB, BA, AC, CA\}$.
				Let $\mathcal{G}$ be the group that permutes the two letters.
				This means we have two orbits: $O_1 = \{AB, BA\}$ and $O_2 = \{AC, CA\}$.
				Let $Y$ denote our $\mathcal{Y}$-valued random variable, $Y^1$ the first letter of $Y$ and $Y^2 = Y$.
				Let $\mathcal{I}_1 = \sigma(Y^1)$ and $\mathcal{I}_2 = \sigma(Y^2) = \sigma(Y)$, so we sequentially observe two letters.
				
				As an example, we consider the log-optimal e-value against any aggregation of the orbit-wise alternatives $\mathbb{Q}_1(AB) = 2/3$, $\mathbb{Q}_1(BA) = 1/3$ and $\mathbb{Q}_2(AC) = 1/3$, $\mathbb{Q}_2(CA) = 2/3$.
				By Corollary \ref{cor:uniform_over_marginals}, this is given by 
				\begin{align*}
					\varepsilon_2(AB) = 4/3, \quad \varepsilon_2(BA) = 2/3, \quad \varepsilon_2(AC) = 2/3, \quad \varepsilon_2(CA) = 4/3,
				\end{align*}
				which automatically gives the value of the e-process at time 2.
				
				To obtain the e-process at time 1, we now restrict these e-values to the orbits and set the restrictions to $\infty$ elsewhere.
				Then, we apply the Doob martingale strategy to obtain		
				\begin{align*}
					\varepsilon_1^{O_1}(A) &= 4/3, & \varepsilon_1^{O_1}(B) &= 2/3, & \varepsilon_1^{O_1}(C) &= \infty, \\
					\varepsilon_1^{O_2}(A) &= 2/3, & \varepsilon_1^{O_2}(B) &= \infty,     & \varepsilon_1^{O_2}(C) &= 4/3.
				\end{align*}
				Minimizing over the two orbits yields the e-process at time 1:
				\begin{align*}
					\varepsilon_1(A) = 2/3, \quad \varepsilon_1(B) = 2/3, \quad \varepsilon_1(C) = 4/3.
				\end{align*}
				We stress that this e-process is \emph{not} a supermartingale for $\mathcal{G}$ invariance, as
				\begin{align*}
					\sup_{\mathbb{P} : \mathcal{G}\textnormal{-invariant}} \mathbb{E}^{\mathbb{P}}[\varepsilon_2 \mid Y^1 = A] 
						&\geq \max_i\mathbb{E}^{\textnormal{Unif}(O_i)}[\varepsilon_2 \mid Y^1 = A] \\
						&= \max_{i \in \{1, 2\}} \varepsilon_1^{O_i}(A)
						= 4/3 > 2/3 = \varepsilon_1(A).
				\end{align*}
			\end{exm}
		
			\begin{rmk}[Non-compact groups and de Finetti]\label{rmk:ergodic}
				As in the non-sequential setting discussed in Remark  \ref{rmk:ergodic_non_sequential}, we may generalize the characterization of e-processes to non-compact groups through an ergodic theorem.
				Here, the uniform probabilities on orbits are replaced by ergodic probabilities.
				To construct an e-process in such a setting, we may then track an infimum of martingales for the ergodic probabilities, instead of tracking an infimum over martingales for each $\mathcal{G}$ invariant probability.
				
				Of course, without imposing additional structure, tracking a martingale for each ergodic probability may still be a daunting task.
				To highlight this, we may consider perhaps the most famous example of an ergodic theorem: de Finetti's theorem.
				Under suitable regularity conditions, it states that if an infinite sequence is exchangeable (invariant under permutations that move finitely many elements), then its law may be written as a mixture over i.i.d. probabilities (the ergodic measures).
				This has been explored by \citet{ramdas2022testing} in a binary and $d$-ary setting, to show the existence of e-processes in settings where no powerful martingales exist.
				Unfortunately, tracking a martingale for each i.i.d. probability is practically difficult beyond simple examples such as binary data.
			\end{rmk}
			
		\section{Test martingales for group invariance} \label{sec:seq_invariance}
			In this section, we consider test martingales for group invariance.			
			A test martingale for a hypothesis $H$ is a non-negative supermartingale $(\varepsilon_n)_{n \geq 0}$ that starts at 1, $\varepsilon_0 = 1$, and satisfies
			\begin{align}\label{ineq:test_martingale}
				\mathbb{E}^{\mathbb{P}}[\varepsilon_{n+1} \mid \mathcal{I}_n] \leq \varepsilon_{n},
			\end{align}
			for every $n \geq 0$ and every $\mathbb{P} \in H$.
			Such a test martingale is also an anytime valid e-process.
			
			There are three ways to look at test martingales.
			The first, and perhaps more common, is as a practical alternative to an e-process, since test martingales are often easier to construct.
			The second is as an e-process that is anytime valid for the fork-convex hull of a hypothesis $H$ \citep{ramdas2022testing}.
			The third way is as an e-process that is not just anytime valid, but \emph{adaptively anytime valid}: for every pair of stopping times $\sigma < \tau$,
			\begin{align*}
				\mathbb{E}^{\mathbb{P}}[\varepsilon_{\tau} \mid \mathcal{I}_\sigma] \leq \varepsilon_{\sigma}, \textnormal{ for every } \mathbb{P} \in H.
			\end{align*}
			Standard anytime validity may be viewed as only requiring this for $\sigma = 0$.
			As test martingales satisfy a stronger validity condition, this necessarily makes them less powerful than e-processes in settings where this additional guarantee is not important.
			
			Unlike in the e-process setting, we do not allow arbitrary filtrations here, but we consider a natural filtration that matches the group setting, sequentially revealing a growing sequence of subgroups $(\mathcal{G}_n)_{n \geq 0}$ and data $(X^n)_{n \geq 0}$.
			At each point in time $n$, we show how to construct an e-value for $\mathcal{G}_n$ invariance of $X^n$ conditional on the past data $X^{n-1}$, and then construct a test martingale as their sequential product.
			
			In Appendix \ref{sec:impoverishing}, we reduce to a single orbit.
			This yields a simple null hypothesis: a uniform distribution on this orbit, so that admissible e-processes and martingales coincide.
			
			\subsection{Invariance under a sequence of groups}\label{sec:sequential_data}
				We embed the sequential setting in a latent sample space $\mathcal{X}$.
				In particular, we assume we have a nested sequence of subspaces $(\mathcal{X}^n)_{n\geq 0}$ of $\mathcal{X}$:  $\mathcal{X}^{n} \subseteq \mathcal{X}^{n+1}$, which are tied together through a sequence of continuous maps $(\textnormal{proj}_{\mathcal{X}^n})_{n \geq 0}$ which project onto the subsets,  $\textnormal{proj}_{\mathcal{X}^n} : \mathcal{X} \to \mathcal{X}^n$.
				With a projection map, we mean that such a map satisfies $\textnormal{proj}_{\mathcal{X}^n}(x) = x$ if $x \in \mathcal{X}^n$, and we assume they are compatible: $\textnormal{proj}_{\mathcal{X}^n} \circ \textnormal{proj}_{\mathcal{X}^{n+1}} = \textnormal{proj}_{\mathcal{X}^n}$, for all $n$.
				
				To describe the sequence of data we are to observe, suppose there is some latent random variable $X$ on $\mathcal{X}$, of which we sequentially observe an increasingly rich sequence $(X^n)_{n \geq 0}$ of projections $X^n = \textnormal{proj}_{\mathcal{X}^n}(X)$, $n \geq 0$.\footnote{This latent random variable is introduced for ease of exposition and it need not be modelled or `exist'.}
				This construction ensures that this sequence of random variables induces a filtration $(\sigma(X^n))_{n \geq 0}$.
				
				Next, we consider the group structure.
				Our sequential group structure is embedded into a (possibly non-compact) group $\mathcal{G}$ that acts continuously on $\mathcal{X}$.
				In particular, we consider a nested sequence of compact subgroups $(\mathcal{G}_n)_{n \geq 0}$ of $\mathcal{G}$.
				We assume the projection map induces a group action of $\mathcal{G}_n$ on $\mathcal{X}^n$ through the group action on $\mathcal{X}$: $Gx^n = \textnormal{proj}_{\mathcal{X}^n}(Gx)$, for all $G \in \mathcal{G}_n$, $x^n = \textnormal{proj}_{\mathcal{X}^n}(x)$, $x \in \mathcal{X}$.\footnote{This is well-defined if and only if $\text{proj}_{\mathcal{X}^n}(x^1) = \text{proj}_{\mathcal{X}^n}(x^2) \implies \textnormal{proj}_{\mathcal{X}^n}(Gx^1) = \textnormal{proj}_{\mathcal{X}^n}(Gx^2)$ for all $G \in \mathcal{G}_n$ and $x^1, x^2 \in \mathcal{X}$ (see, for example, Theorem 2.4 in \cite{eaton1989group}).}
				This assumption ensures we can use the groups $(\mathcal{G}_n)_{n \geq 0}$ and observations $(X^n)_{n \geq 0}$ without reference to the latent $\mathcal{G}$, $\mathcal{X}$ and $X$.
				
				Our goal now is to test the hypothesis that $(X^n)_{n\geq 0}$ is invariant under $(\mathcal{G}_n)_{n \geq 0}$.
				If there is a terminal observation $X$ and group $\mathcal{G}$ then this is equivalent to testing $\mathcal{G}$ invariance of $X$.

				\begin{exm}[Exchangeability and i.i.d.]\label{exm:sequential_exchangeability}
					Suppose that $X^n = (Y_0, \dots, Y_n)$ for each $n$.
					Let us choose $\mathcal{G}_n = \mathfrak{P}_n$ as the group of permutations on $n+1$ elements.
					We say $(X^n)_{n \geq 0}$ is exchangeable if it is invariant under $(\mathfrak{P}_n)_{n \geq 0}$.
					Referring back to Remark \ref{rmk:ergodic}, testing exchangeability is equivalent to testing whether the sequence is i.i.d. by de Finetti's theorem.
				\end{exm}			
				\begin{exm}[Within-batch exchangeability]\label{exm:within_batch_exchangeability}
					Suppose we sequentially observe potentially unequally sized batches of data $Y_0, Y_1, \dots$, where each $Y_i$ is exchangeable, $i = 0, 1, \dots$.
					We can choose $\mathcal{G}_n = \mathfrak{P}^0 \times \mathfrak{P}^1 \times \cdots \times \mathfrak{P}^n$, where $\mathfrak{P}^i$ is the group of permutations acting on the batch $Y_i$.
					Defining $X^n = (Y_0, \dots Y_n)$, within-batch exchangeability can be viewed as invariance of $(X^n)_{n \geq 0}$ under this group $(\mathcal{G}_n)_{n \geq 0}$.
					
					If we view the elements of a batch as individual observations, then within-batch exchangeability is weaker than exchangeability of individual observations: we exclude permutations that swap observations across batches.
					Specifically, the groups we consider here are subgroups of the permutations on the set of the individual observations.
					The idea to test sequential invariance of all observations by batching units into pairs has been independently explored by \citet{saha2024testing}.
				\end{exm}
				
			\subsection{Filtration}
				To sequentially test $(\mathcal{G}_n)_{n \geq 0}$ invariance of $(X^n)_{n \geq 0}$, we require the sequence of groups to be predictable.
				Recall from Corollary \ref{cor:conditional_validity} that validity under $\mathcal G_n$ is equivalent to validity under $\mathrm{Unif}(O_{X^n})$ conditional on the orbit.
				For this reason, we may work with the coarsened filtration
				\begin{align*}
					 \sigma(O_{X^0}) \subseteq \sigma(X^0, O_{X^1}) \subseteq \sigma(X^1, O_{X^2}) \subseteq \sigma(X^2, O_{X^3}) \subseteq \cdots,
				\end{align*}
				which keeps exactly the information needed to formulate a conditional e-value at each step.
				We use the shorthand $\mathcal{I}_n = \sigma(X^n, O_{X^{n+1}})$, $n \geq 0$ and $\mathcal{I}_0 = \sigma(O_{X^0})$.

			\subsection{Test martingale}
				To construct a test martingale, we construct conditional e-values $\varepsilon_n$, that are valid for $\mathcal{G}_n$ invariance conditional on the past data:
				\begin{align}\label{eq:conditional_e-value}
					\mathbb{E}^{\textnormal{Unif}(O_{X^n})}[\varepsilon_n \mid \mathcal{I}_{n-1}] \leq 1.
				\end{align}
				The test martingale itself is then given by its running product: $\varepsilon^n
						= \prod_{i = 0}^n \varepsilon_i$.
				
				In Proposition \ref{prp:conditional_e-value}, we present a characterization of the conditional e-value \eqref{eq:conditional_e-value}.
				The trick underlying this result is captured in Lemma \ref{lem:adapted_distribution}, which characterizes the conditional distribution $\textnormal{Unif}(O_{X^n}) \mid (X^{n-1}, O_{X^n})$ by means of a subgroup that stabilizes the past data.
				In particular, given $X^{n-1} = x^{n-1}$, we define $\mathcal{K}_n(x^{n-1})
						= \{G \in \mathcal{G}_n : Gx^{n-1} = x^{n-1}\}$, 	for $n \geq 1$ and $\mathcal{K}_0 = \mathcal{G}_0$.
				In Section \ref{proof:adapted_distribution} in the Supplementary Material, we show that this is indeed a compact subgroup of $\mathcal{G}_n$, and include a proof of a more general result.

				\begin{prp}\label{prp:conditional_e-value}
					An e-value $\varepsilon_n$ is conditionally valid given $O_{X^n}$ and $X^{n-1}$ if $\mathbb{E}_{\overline{K}_n}[\varepsilon_n(\overline{K}_nx^n)] \leq 1, \textnormal{ for every } x^n \in \mathcal{X}^n$.
				\end{prp}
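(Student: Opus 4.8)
The plan is to derive Proposition~\ref{prp:conditional_e-value} directly from Lemma~\ref{lem:adapted_distribution}, which supplies the conditional law we must integrate against; the only extra ingredient is the observation that, under the measure $\textnormal{Unif}(O_{X^n})$, conditioning on $\mathcal{I}_{n-1} = \sigma(X^{n-1}, O_{X^n})$ collapses to conditioning on $X^{n-1}$ alone. Indeed, once we fix the orbit by working under $\textnormal{Unif}(O_{X^n})$, the component $O_{X^n}$ of $\mathcal{I}_{n-1}$ is already pinned down and carries no further information.

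First I would make this reduction precise. Fix the orbit $O = O_{x^n}$ and work under $X^n \sim \textnormal{Unif}(O)$. Because $X^{n-1} = \textnormal{proj}_{\mathcal{X}^{n-1}}(X^n)$ is a $\mathcal{G}_n$-equivariant function of $X^n$ (by the assumed compatibility of the projection with the group action), its value is constant along the stabilizer: for $G \in \mathcal{K}_n(x^{n-1})$ we have $\textnormal{proj}_{\mathcal{X}^{n-1}}(Gx^n) = G\,\textnormal{proj}_{\mathcal{X}^{n-1}}(x^n) = Gx^{n-1} = x^{n-1}$. Hence the fiber $\{z \in O : \textnormal{proj}_{\mathcal{X}^{n-1}}(z) = x^{n-1}\}$ is exactly the $\mathcal{K}_n(x^{n-1})$-orbit of $x^n$, and moreover $\{G \in \mathcal{G}_n : Gx^n = x^n\} \subseteq \mathcal{K}_n(x^{n-1})$, since stabilizing $x^n$ forces stabilizing its projection $x^{n-1}$. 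Lemma~\ref{lem:adapted_distribution} then identifies the conditional law of $X^n$ given $(X^{n-1}, O_{X^n}) = (x^{n-1}, O)$ under $\textnormal{Unif}(O)$ as the law of $\overline{K}_n x^n$ with $\overline{K}_n \sim \textnormal{Unif}(\mathcal{K}_n(x^{n-1}))$.

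Substituting this conditional law into the defining inequality \eqref{eq:conditional_e-value} then gives $\mathbb{E}^{\textnormal{Unif}(O_{X^n})}[\varepsilon_n \mid \mathcal{I}_{n-1}] = \mathbb{E}_{\overline{K}_n}[\varepsilon_n(\overline{K}_n x^n)]$. The right-hand side is unchanged when $x^n$ is replaced by $Kx^n$ for $K \in \mathcal{K}_n(x^{n-1})$ (by left-invariance of the Haar measure on $\mathcal{K}_n$), so it is a genuine function of the conditioning information and serves as a version of the conditional expectation. Conditional validity, i.e. that this conditional expectation is at most $1$ almost surely, therefore holds if and only if $\mathbb{E}_{\overline{K}_n}[\varepsilon_n(\overline{K}_n x^n)] \leq 1$ for every $x^n \in \mathcal{X}^n$. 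This is precisely the orbit-wise characterization of Theorem~\ref{thm:e-values}, now applied to the compact group $\mathcal{K}_n(x^{n-1})$ on the single orbit of $x^n$, which yields the claimed equivalence.

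The substantive work sits entirely in Lemma~\ref{lem:adapted_distribution}, and that is where I expect the main obstacle. The naive route --- writing $X^n = \overline{G}_n x^n$ with $\overline{G}_n \sim \textnormal{Unif}(\mathcal{G}_n)$ and conditioning on $\{\overline{G}_n \in \mathcal{K}_n(x^{n-1})\}$ --- fails, because when $\mathcal{K}_n(x^{n-1})$ is a proper closed subgroup this event is $\textnormal{Unif}(\mathcal{G}_n)$-null, so one cannot condition on it elementarily. The correct approach is a disintegration of the Haar measure along the equivariant map $\textnormal{proj}_{\mathcal{X}^{n-1}}$ (equivalently, along the quotient $\mathcal{G}_n / \mathcal{K}_n$): compactness of $\mathcal{K}_n$ guarantees its Haar probability exists, and uniqueness of the invariant probability on each fiber forces the conditional measures to coincide with the push-forward of $\textnormal{Unif}(\mathcal{K}_n(x^{n-1}))$ through $K \mapsto Kx^n$. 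This is exactly the more general fact established in Section~\ref{proof:adapted_distribution}, and it is what legitimizes the clean pointwise condition asserted in Proposition~\ref{prp:conditional_e-value}.
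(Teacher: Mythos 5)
Your proposal is correct and follows essentially the same route as the paper: the paper treats Proposition \ref{prp:conditional_e-value} as an immediate consequence of Lemma \ref{lem:adapted_distribution} (proved in the appendix via the more general Proposition \ref{prp:conditional_invariance}, which identifies the fiber $\{y \in O : h(y) = z\}$ with the stabilizer-subgroup orbit and invokes invariance under the compact subgroup $\mathcal{K}_n(x^{n-1})$ to get uniformity there), and your substitution of that conditional law into \eqref{eq:conditional_e-value} is exactly the implicit step the paper relies on. Your added remarks on well-definedness of the version of the conditional expectation and on why naive conditioning on a Haar-null event fails are consistent with, and slightly more explicit than, the paper's treatment.
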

				
				\begin{lem}\label{lem:adapted_distribution}
					Let $X^n$ be $\mathcal{G}_n$ invariant.
					Pick $O \in \mathcal{X}^n / \mathcal{G}_n$, $x^n \in O$,	 with $\textnormal{proj}_{\mathcal{X}^{n-1}}(x^n) = x^{n-1}$.
					Let $\overline{K}_n \sim \textnormal{Unif}(\mathcal{K}_n(x^{n-1}))$.
					Then, $X^n \mid (O_{X^n} = O, X^{n-1} = x^{n-1}) \overset{d}{=} \overline{K}_n x^{n}$.
				\end{lem}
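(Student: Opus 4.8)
The plan is to reduce the statement to a computation with Haar measure on $\mathcal{G}_n$ and then to pin down a regular conditional law by a convolution identity, avoiding any direct conditioning on a null event. First I would invoke condition 5 of Lemma~\ref{lem:invariance_random_variable}: since $X^n$ is $\mathcal{G}_n$ invariant, a version of its conditional law given $O_{X^n} = O$ is $\textnormal{Unif}(O)$. Using the remark after Theorem~\ref{thm:e-values} that $\overline{G}_n y \sim \textnormal{Unif}(O)$ for every $y \in O$, I would represent this law through the chosen representative: with $\overline{G}_n \sim \textnormal{Unif}(\mathcal{G}_n)$ we have $X^n \overset{d}{=} \overline{G}_n x^n$ conditional on $O_{X^n} = O$. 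Under this representation the remaining event $X^{n-1} = x^{n-1}$ becomes $\psi(\overline{G}_n) = x^{n-1}$, where $\psi(G) := \textnormal{proj}_{\mathcal{X}^{n-1}}(G x^n)$, because $X^{n-1} = \textnormal{proj}_{\mathcal{X}^{n-1}}(X^n)$.

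The structural core is to identify the fiber $\psi^{-1}(x^{n-1})$ with the stabilizer $\mathcal{K}_n(x^{n-1})$ and to show right-multiplication by this stabilizer leaves $\psi$ invariant. Here I would use that the action of $\mathcal{G}_n$ on $\mathcal{X}^{n-1}$ is the one induced through the projection (as assumed in Section~\ref{sec:sequential_data}), so that $G x^{n-1} = \textnormal{proj}_{\mathcal{X}^{n-1}}(G x^n) = \psi(G)$ for every $G \in \mathcal{G}_n$; this invokes the compatibility that $\textnormal{proj}_{\mathcal{X}^{n-1}}(Gx)$ depends only on $\textnormal{proj}_{\mathcal{X}^{n-1}}(x)$, together with $\textnormal{proj}_{\mathcal{X}^{n-1}}(x^n) = x^{n-1}$. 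Consequently $\psi^{-1}(x^{n-1}) = \{G \in \mathcal{G}_n : G x^{n-1} = x^{n-1}\} = \mathcal{K}_n(x^{n-1})$ directly from the definition of the stabilizer. The same compatibility gives $\psi(GK) = \psi(G)$ for every $K \in \mathcal{K}_n(x^{n-1})$, since then $Kx^n$ and $x^n$ share their projection onto $\mathcal{X}^{n-1}$.

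With these two facts I would finish by identifying the conditional law through a symmetry argument rather than disintegrating by hand. Fixing $K \in \mathcal{K}_n(x^{n-1})$, right-invariance of Haar measure gives $\overline{G}_n K \overset{d}{=} \overline{G}_n$, and since $\psi(\overline{G}_n K) = \psi(\overline{G}_n)$ the joint laws satisfy $(\psi(\overline{G}_n), \overline{G}_n) \overset{d}{=} (\psi(\overline{G}_n), \overline{G}_n K)$; averaging $K$ against $\textnormal{Unif}(\mathcal{K}_n(x^{n-1}))$ (independently of $\overline{G}_n$) extends this to $(\psi(\overline{G}_n), \overline{G}_n) \overset{d}{=} (\psi(\overline{G}_n), \overline{G}_n \overline{K}_n)$. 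Hence a regular conditional law $\mu$ of $\overline{G}_n$ given $\psi(\overline{G}_n) = x^{n-1}$ satisfies $\mu = \mu * \textnormal{Unif}(\mathcal{K}_n(x^{n-1}))$; as $\mu$ is supported on the fiber $\mathcal{K}_n(x^{n-1})$ and convolving any probability measure on a compact group with its Haar measure returns that Haar measure, we obtain $\mu = \textnormal{Unif}(\mathcal{K}_n(x^{n-1}))$. Pushing forward through $G \mapsto G x^n$ then yields $X^n \mid (O_{X^n} = O, X^{n-1} = x^{n-1}) \overset{d}{=} \overline{K}_n x^n$.

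I expect the main obstacle to be the measure-zero nature of the event $\{X^{n-1} = x^{n-1}\}$: the fiber $\mathcal{K}_n(x^{n-1})$ is typically Haar-null in $\mathcal{G}_n$, so one cannot condition naively. The device that sidesteps this is the joint-distribution identity above, which collapses the claim to the elementary fact $\mu * \textnormal{Haar} = \textnormal{Haar}$; the only remaining care is selecting versions of the conditional laws consistently, matching the ``up to $\mathbb{P}_{[Y]}$-almost sure'' caveat already flagged for condition 5 of Lemma~\ref{lem:invariance_random_variable}. By comparison the fiber identification is routine once the projection--action compatibility is invoked at the level of $\mathcal{X}^{n-1}$.
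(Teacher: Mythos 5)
Your proof is correct, but it takes a genuinely different route from the paper's. The paper proves a more general statement (Proposition~\ref{prp:conditional_invariance}, for an arbitrary $\mathcal{G}$-equivariant statistic $h$) by a purely set-level orbit argument: it observes that the conditioning event $\{y \in O : h(y) = z\}$ is \emph{exactly} the orbit $\mathcal{K}^h(x)x$ of $x$ under the stabilizer subgroup, notes that a $\mathcal{G}$-invariant $Y$ is in particular $\mathcal{K}^h(x)$-invariant, and then applies condition~5 of Lemma~\ref{lem:invariance_random_variable} to that subgroup to conclude $Y$ is uniform on that smaller orbit. You instead work upstairs on the group: you represent the conditional-on-orbit law as $\overline{G}_n x^n$, identify the fiber of $\psi(G)=\textnormal{proj}_{\mathcal{X}^{n-1}}(Gx^n)$ with the stabilizer via equivariance, and pin down the disintegration of Haar measure over $\psi$ through the convolution identity $\mu * \textnormal{Unif}(\mathcal{K}_n) = \textnormal{Unif}(\mathcal{K}_n)$, using right-invariance of Haar. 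Your fiber identification and the convolution step are both sound, and your approach is more explicit about why conditioning on the Haar-null event $\{X^{n-1}=x^{n-1}\}$ is legitimate --- the paper glosses over this with the same ``version of the conditional law'' caveat it attaches to condition~5. What the paper's route buys is brevity and generality (it covers any equivariant statistic, which is reused in Appendix~\ref{sec:impoverishing}); what yours buys is a self-contained measure-level mechanism. Two small points: you silently rely on $\mathcal{K}_n(x^{n-1})$ being a compact (closed) subgroup so that its Haar probability measure exists --- the paper proves this separately in Lemma~\ref{lem:stabilizer}, and it is worth at least citing; and the support claim for the regular conditional law at the specific point $x^{n-1}$ (rather than almost every point) deserves the one-line justification that $x^{n-1}$ lies in the image of $\psi$ and a consistent version can be chosen there.
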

				
				Proposition \ref{prp:conditional_e-value} shows that we may reduce the problem of constructing a conditional e-value to constructing an unconditional e-value that is valid for invariance under a data-dependent group $\mathcal{K}_n(X^{n-1})$.
				This means we may immediately apply the machinery derived in Section \ref{sec:ph_gi} and \ref{sec:optimal}, where we study the construction of such unconditional e-values.
				For example, following Section \ref{sec:ph_gi}, we may choose
				\begin{align*}
					\varepsilon_n(X^n)
						= \frac{T_n(X^n)}{\mathbb{E}_{\overline{K}_n}T_n(\overline{K}_nX^n)},
				\end{align*}
				where $T_n$ is a predictable non-negative test statistic.
				
				Alternatively, given a predictable alternative $\mathbb{Q}_n$ on $\mathcal{X}^n$, we may define $\overline{\mathbb{Q}}_n = \mathbb{E}_{\overline{K}_n}[\overline{K}_n\mathbb{Q}_n]$ with densities $q_n$ and $\overline{q}_n$ with respect to some reference measure and construct an expected utility-optimal e-value as in Section \ref{sec:optimal} based on $d\mathbb{Q}_n/d\overline{\mathbb{Q}}_n$.
				
				\begin{rmk}\label{rmk:induced_invariance}
					With the test martingale, we are effectively testing whether $(X^n)_{n \geq 1}$ is $(\mathcal{K}_n)_{n \geq 0}$ invariant.
					As the subgroups $(\mathcal{K}_n)_{n \geq 0}$ may be less rich than the original groups, we are testing a larger hypothesis than $(\mathcal{G}_n)_{n \geq 0}$ invariance.
					This shows where the test martingale loses power compared to an e-process.
					We illustrate this in Example \ref{exm:sequential_sphericity}, \ref{exm:seq_exch_degenerate} and \ref{exm:within_batch_exch}.
				\end{rmk}			
				
				\begin{exm}[Sequential sphericity]\label{exm:sequential_sphericity}
					Suppose that $\mathcal{X}^n = \mathbb{R}^n$ so that $X^n$ is a random $n$-vector for all $n$.
					Let $\mathcal{O}_n$ be the collection of $n \times n$ orthonormal matrices.
					Then, $X^n$ is said to be spherically distributed if it is invariant under $\mathcal{O}_n$.
					We consider testing invariance of the sequence $(X^n)_{n \geq 1}$ under matrix multiplication by the orthonormal matrices in $(\mathcal{O}_n)_{n \geq 1}$.
					
					In this example, the orbit $O_{X^n}$ is the hypersphere in $n$ dimensions that contains $X^n$.
					As a consequence, the effective filtration reveals the previous observations $X^{n-1}$ and the length of $X^n$.
					Together, these determine $X^n$ up to the sign of its final element.
					As a result, $\mathcal{K}_n$ contains two elements: $\textnormal{diag}(1, \dots, 1, 1)$ and $\textnormal{diag}(1, \dots, 1, -1)$, which flips the sign of the final element.
					This is equivalent to testing whether $X^n$ is invariant under sign-flips.
				\end{exm}
				
				\begin{exm}[Test martingale for exchangeability]\label{exm:seq_exch_degenerate}
					Continuing from Example \ref{exm:sequential_exchangeability}, suppose we sequentially observe $X^n = (Y_0, Y_1, \dots, Y_n)$ that are exchangeable.
					
					Here, it turns out that $X^n$ is degenerate conditional on $\sigma(X^{n-1}, O_{X^n})$.
					In particular, $X^{n-1} = (Y_0, Y_1, \dots, Y_{n-1})$ and $O_{X^n}$ equals the multiset $\{Y_0, \dots, Y_{n}\}$.
					Hence, $Y_n$ is simply the value in $O_{X^n}$ that is not accounted for in $X^{n-1}$.
					As a consequence, the conditional distribution $X^{n}$ given $X^{n-1}$ and $O_{X^n}$ is degenerate.
					Assuming the realizations are distinct, this means $\mathcal{K}_n$ only contains the identity element for each $n$.
					
					A consequence is that it is impossible to sequentially test exchangeability with a test martingale under the filtration $(\sigma(X^{n}))_{n \geq 0}$, as previously observed by \cite{vovk2021testing} and \cite{ramdas2022testing}.
					Our discussion gives some context around their impossibility result, by showing it may be interpreted as the group $\mathcal{K}_n$ becoming degenerate.
				\end{exm}
	
				\begin{exm}[Test martingale for within-batch exchangeability]\label{exm:within_batch_exch}
					Continuing from Example \ref{exm:within_batch_exchangeability}, let us again consider $X^n = (Y_0, \dots, Y_n)$, where each $Y_i$ is an exchangeable batch of data.
					Let us assume the realizations are distinct in each batch.
					Then, $\mathcal{K}_n(X^{n-1}) = \{I^0\} \times \{I^1\} \times \cdots \times \{I^{n-1}\} \times \mathfrak{P}^n$, where $I^i$ denotes the identity permutation acting on the $i$th batch, for $n \geq 1$ and $\mathcal{K}_0 = \mathfrak{P}^0$.
					That is, the conditional distribution of $X^n$ is uniform on the final batch.
					Interestingly, the stabilizer $\mathcal{K}_n(X^{n-1})$ does not depend on $X^{n-1}$.
					
					As discussed in Example \ref{exm:within_batch_exchangeability}, exchangeability implies within-batch exchangeability.
					This means rejecting within-batch exchangeability also rejects exchangeability.
					As a result, we can construct a sequential test for exchangeability by merging observations into batches.
					This of course impoverishes the filtration, since we only look at the data after a batch has arrived.
					The size of a batch is allowed to be adaptive.
					This reasoning is generalized in Appendix \ref{sec:impoverishing}.
				\end{exm}

		\section{Simulations and application}
						
			\subsection{Case-control experiment and learning the alternative}
				In this simulation study, we consider a hypothetical case-control experiment in which units are assigned to either the treated or control set uniformly at random.
				In each interval of time, we receive the outcomes of a number of treated and control units, where the number of treated and control units is Poisson distributed with parameter $\theta > 0$ and a minimum of 1.
				The outcomes of the treated units are $\mathcal{N}(a, 1)$-distributed and the outcomes of the controls are $\mathcal{N}(b, 1)$-distributed.
				The true mean and variance are considered unknown, and are adaptively learned based on the previously arrived data.
				As a batch of data, we consider the combined observations of both the treated and control units that arrived in the previous interval of time.
				
				As a result, a batch $X_t$ of $n^t$ outcomes, consisting of $n_a^t$ treated and $n_b^t$ control units, can be represented as
				\begin{align*}
					X_t
						\sim 	
						\begin{bmatrix}
							1_{n_{a}^t}a\\
							1_{n_{b}^t}b
						\end{bmatrix}
						+ \mathcal{N}\left(0, I\right),
				\end{align*}
				where $1_{n_a^t}$ and $1_{n_b^t}$ denote vectors of $n_a^t$ and $n_b^t$ ones, respectively, and the first $n_a^t$ elements correspond to the treated units, without loss of generality.
				We would like to base our test statistic on the difference of sample means: $\overline{1}_{n^t}'X_t \sim \mathcal{N}\left(a - b,  1/n_a^t + 1/n_b^t\right)$,
				where $\overline{1}_{n^t} = (1_{n_{a}^t} (n_a^t)^{-1}, -1_{n_{b}^t}(n_b^t)^{-1})$.
				In particular, we will test the null hypothesis that the elements of a batch $X_t$ are exchangeable and so $a = b$, against the alternative hypothesis that $a > b$.
				
				We use a test martingale based on the log-optimal e-value for testing exchangeability against our current estimate of the Gaussian alternative, as derived in Appendix \ref{sec:gaussian},
				\begin{align*}
					\varepsilon_t = \frac{\exp\{(\widehat{a}_{t-1} - \widehat{b}_{t-1})/ \widehat{\sigma}_{t-1}^2 \times \overline{1}_{n^t}'X_t\}}{\mathbb{E}_{\overline{G}} \exp\{(\widehat{a}_{t-1} - \widehat{b}_{t-1})/ \widehat{\sigma}_{t-1}^2 \times \overline{1}_{n^t}'\overline{G}X_t\}},
				\end{align*}
				where $\widehat{a}_{t-1} - \widehat{b}_{t-1} = \overline{1}_{n^{t-1}}'X_{t-1}$ is our treatment estimator at time $t-1$ and $\widehat{\sigma}_{t-1}^2$ is its pooled sample variance estimator, and $\overline{G}$ is uniform on the permutations of $n^t$ elements.
				For the first batch, we can either rely on an educated guess, or skip it for inference and only use it for estimating these parameters.
				We estimate the normalization constant by using 100 permutations drawn uniformly at random with replacement.
				
				For our simulations, we consider the arrival of 40 batches with $\theta = 25$.
				Without loss of generality, we choose $a = b = 0$ under the null, and $a = .2$ and $b = 0$ under the alternative.
				To use in the first batch, we choose $\widehat{a}_0 = .2$, $\widehat{b}_0 = 0$ and $\widehat{\sigma}_0^2 = 1$.
				
				In Figure \ref{fig:power_cc}, we plot the test-martingale-based e-processes for 1\,000 simulations.
				The dotted line indicates the value $20 = 1/0.05$, so that exceeding this line corresponds to a rejection at level $\alpha = 0.05$.
				The plot on the left features the setting under the null, and the plot on the right the setting under the alternative.
				To make the figure easier to interpret, we plot at each time the line below which 5\%, 50\% and 95\% of the test martingales have remained up until that point.
				For example, in the right plot, roughly 95\% of the e-processes have exceeded 20 at batch 23, so that the power at level $\alpha$ is roughly 95\% after 23 batches.
				As expected, the left plot shows that 95\% of the e-processes remain below 20 under the null.
				
				\begin{figure}
					\center
					\includegraphics[width=7cm]{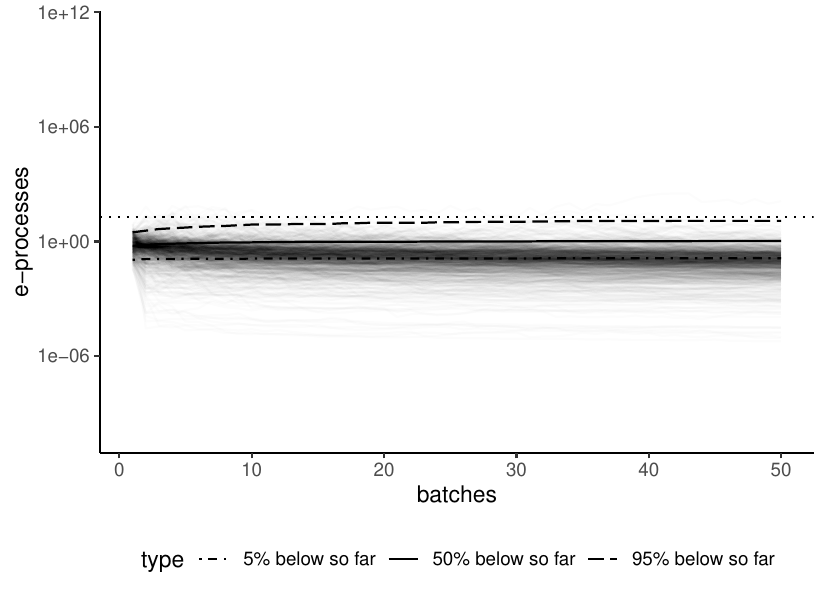}
					\includegraphics[width=7cm]{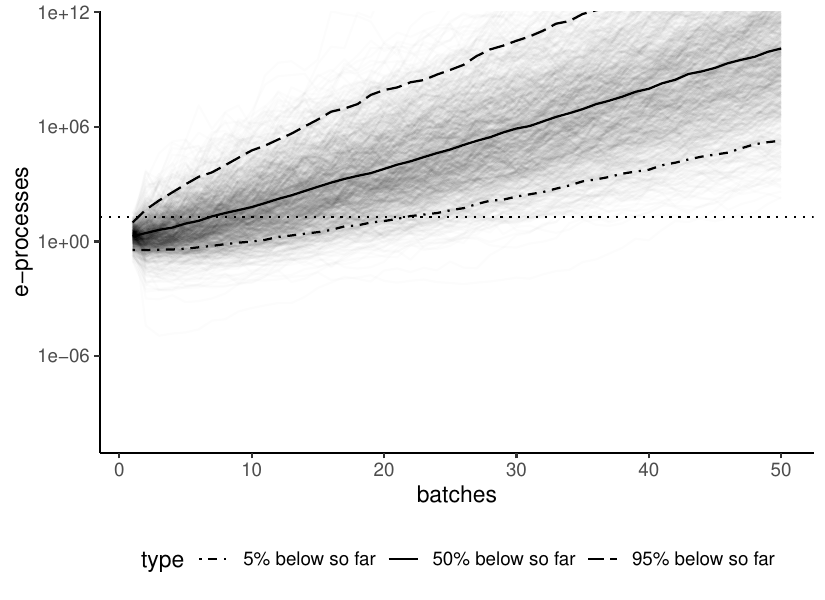}
					\caption{Plots of 1\,000 e-processes over the number of arrived batches. The highlighted lines are running quantiles: x\% of the e-processes have not crossed above the line at the indicated time. The plot on the left is under the null hypothesis, and the plot on the right is under the alternative. The horizontal dotted line is at 20.}
					\label{fig:power_cc}
				\end{figure}
				
		\subsection{Testing symmetry and comparison to \cite{de1999general}}
			In this simulation study, we consider testing sign-symmetry of data as in Appendix \ref{sec:symmetry}.
			We compare our e-process to the one based on \cite{de1999general} against the alternative $X_i \sim \mathcal{N}(m, 1)$ with $m = 1$.
			
			We plot 1\,000 e-processes of each type in Figure \ref{fig:symmetry}.
			The plot on the left is our log-optimal e-value-based e-process, whereas the plot on the right is based on \cite{de1999general}.
			The figure shows that our e-processes grow much more quickly.
			This coincides with the observation made by \cite{ramdas2022admissible} that the e-process based on \cite{de1999general} is inadmissible.			
			
			\begin{figure}
				\includegraphics[width=7cm]{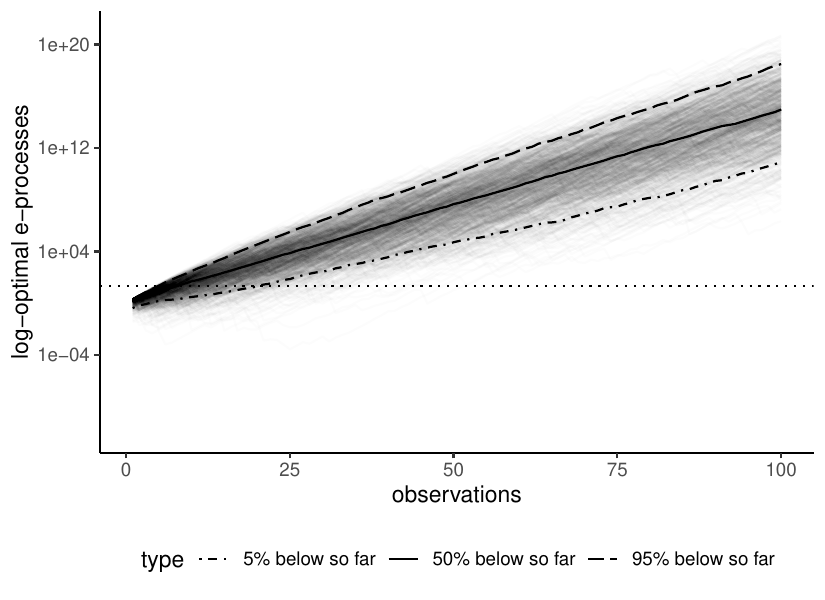}
				\includegraphics[width=7cm]{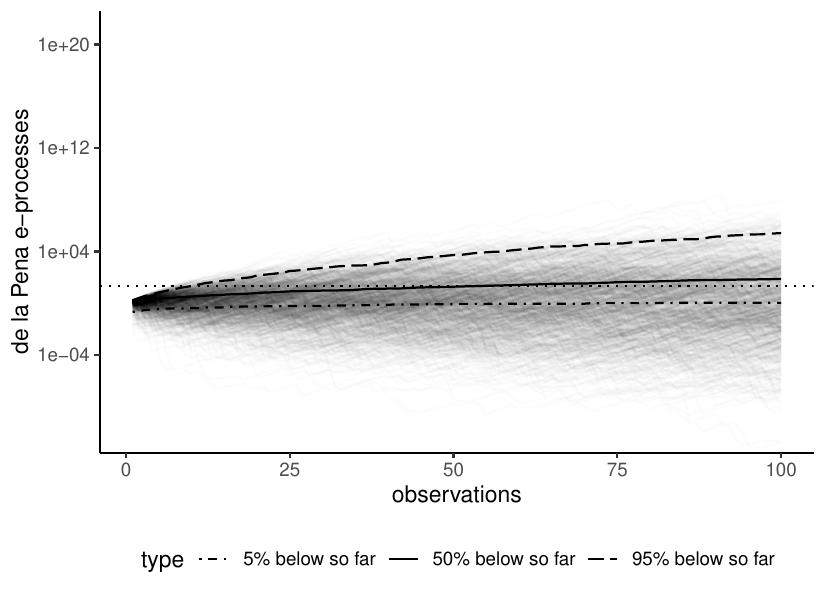}
				\caption{Plots of 1\,000 e-processes over the number of arrived observations under a normal alternative with mean $m = 1$. The highlighted lines are running quantiles: x\% of the e-processes have not crossed above the line at the indicated time. The plot on the left is for our log-optimal e-value-based e-process, and the plot on the right is for the one based on \cite{de1999general}. The horizontal dotted line is at 20.}
				\label{fig:symmetry}
			\end{figure}
		
		\subsection{Illustrative application: optimal e-values for the hot hand}\label{sec:hot_hand}
			Following Example \ref{exm:hot_hand}, we apply our methodology to test the hot hand in basketball.
			Here, we observe the outcomes (hits/misses) of $n$ shots of a basketball player and want to test whether they are exchangeable.
			
			To specify an e-value, we may condition on (\#hit, \#miss), which is equivalent to conditioning on the orbit.
			Following Corollary \ref{cor:uniform_over_marginals}, we specify the alternative on the orbit, to describe how we believe the hot hand works.
			This absolves us from having to specify an alternative across orbits, which would require prior knowledge of the skills of the shooters.
			
			For simplicity, we say a player is `hot' if they hit $k$ shots in a row.
			If a player is hot, suppose this boosts their probability of hitting the next shot conditional on the orbit through $p_{\textnormal{hot}} = (p_{\textnormal{neutral}})^\beta $, 	where $p_{\textnormal{neutral}}$ represents the conditional probability to hit in the absence of a hot hand: the number of remaining hits divided by the number of remaining shots in the sequence.
			This means that if $\beta = 1$, $p_{\textnormal{hot}} = p_{\textnormal{neutral}}$, and $p_{\textnormal{hot}} > p_{\textnormal{neutral}}$ when $\beta < 1$.
			For example, if $p_{\textnormal{neutral}} = 0.5$ and $\beta = 0.9$, then $p_{\textnormal{hot}} \approx 0.536$ --- a modest boost.
			
			For example, suppose the shot sequence is 111010, where 1 represents a hit and 0 a miss, and let $\beta = 0.9$ and $k = 2$.
			As there are $\binom{6}{4} = 15$ permutations of this sequence, the conditional probability of this sequence given the orbit equals 1/15 under the null. 
			Under the alternative, we decompose the conditional probability of the shot sequence given (\#hit, \#miss), into a sequence of further conditional probabilities given the previous shot outcomes:
			\begin{align*}
				&\textnormal{Pr}_{\beta = 0.9}( 111010 \mid (4, 2)) \\
					&= \textnormal{Pr}_{\beta = 0.9}(\textnormal{shot }1 = 1 \mid (4, 2)) \times \textnormal{Pr}_{\beta = 0.9}(\textnormal{shot }2 = 1 \mid (4, 2), \textnormal{shot }1 = 1) \times \cdots \\
					&= 4/6 \times 3/5 \times (2/4)^{0.9} \times (1 - (1/3)^{0.9}) \times 1/2 \times 1 \approx 0.0673.
			\end{align*}
			 where the powers of $0.9$ are because the preceding two shots were a hit, increasing the probability of a subsequent hit.
			By Corollary \ref{cor:log-optimal}, the resulting log-optimal e-value equals $\approx 0.0673 / (1/15) = 1.0095$ --- tiny evidence against no hot hand.
			This is unsurprising, as \citet{ritzwoller2022uncertainty} show long sequences are required to detect a hot hand.
			
			We offer an alternative solution by leveraging the merging properties of e-values: the product of independent e-values is also an e-value.
			We apply this idea to the controlled shooting experiment data collected by \citet{gilovich1985hot}, with 26 shooters taking up to 100 shots each.\footnote{We retrieved this data from the Supplementary Material of \citet{miller2018surprised}.}
			We consider variations of the hot hand that trigger after 1, 2 or 3 consecutive hits, with $\beta \in \{0.85, 0.9\}$.
			
			Table \ref{tab:hot_hand} reports the product of the e-values for the individual shooters.
			To interpret these e-values, recall that their reciprocals $\mathfrak{p} = 1/e$ are post-hoc p-values, which we may interpret as a rejection at level $\mathfrak{p}$ under a generalized Type-I error \citep{koning2023markov, grunwald2023beyond}.
			Looking at the product, we find strong evidence in support of the null (no hot hand) when compared to a 1-hit hot hand, but we find substantial evidence against the null for 2-hit and 3-hit triggers.
			The full table with e-values for each shooter is reported in Appendix \ref{appn:hot_hand}.
			
			\begin{table}
				\begin{tabular}{lrrrrrr}
				\toprule
				\multicolumn{1}{c}{Trigger} & \multicolumn{2}{c}{1 hit} & \multicolumn{2}{c}{2 hits} & \multicolumn{2}{c}{3 hits} \\
				\cmidrule(l{3pt}r{3pt}){2-3} \cmidrule(l{3pt}r{3pt}){4-5} \cmidrule(l{3pt}r{3pt}){6-7}
				\multicolumn{1}{c}{$\beta$} & 0.85 & 0.90 & 0.85 & 0.90 & 0.85 & 0.90\\
				\midrule
				\addlinespace
				Product e-value & 0.007 & 0.180 & 3.108 & 4.460 & 7.489 & 5.525\\
				Post-hoc p-value & 142.9 & 5.556 & 0.322 & 0.224 & 0.134 & 0.181 \\
				\bottomrule
				\end{tabular}
				\caption{Product of log-optimal e-values and post-hoc p-values ($\mathfrak{p} = 1/e$) for the controlled shooting experiment of \citet{gilovich1985hot} for exchangeability against several hot hand alternatives, triggering after 1-3 hits for a modest effect $(\beta = 0.85)$ and weak effect $(\beta = 0.9)$. }
				\label{tab:hot_hand}
			\end{table}

\section*{Acknowledgments}
	We thank Sam van Meer, Muriel P\'erez-Ortiz, Tyron Lardy, Will Hartog, Yaniv Romano, Jake Soloff, Stan Koobs, Peter Gr\"unwald, Wouter Koolen, Aaditya Ramdas and Dante de Roos for useful discussions and comments.
	Part of this research was performed while the author was visiting the Institute for Mathematical and Statistical Innovation, which is supported by the National Science Foundation (Grant No. DMS-1929348).
\section*{Funding}
	The author is supported by a Starter Grant of the Dutch government.
\bibliographystyle{plainnat}
\bibliography{bibliography}

\clearpage
\section*{Supplementary Material}

	\section{Illustration: optimal e-values for invariance against Gaussian location-shift}\label{sec:gaussian}
		In this section, we illustrate our optimal e-values for testing invariance under a group of orthonormal matrices, against a Gaussian alternative under a location shift.
		If we include all orthonormal matrices, this yields clean connections to parametric theory and Student's $t$-test.
		Moreover, we also consider exchangeability, which reveals an interesting relationship to the softmax function.
		In addition, we consider sign-symmetry, which we relate to a previously-studied e-value based on \cite{de1999general}, and to work of \citet{vovk2024nonparametric}.
		
		We start with an exposition of the invariance-based concepts for the orthogonal group $\mathcal{O}(d)$ that consists of all orthonormal matrices.

		\subsection{Sphericity}\label{sec:sphericity_background}
			Suppose that $\mathcal{Y} = \mathbb{R}^d\setminus\{0\}$ and $\mathcal{G} = \mathcal{O}(d)$ is the orthogonal group, which can be represented as the collection of all $d \times d$ orthonormal matrices, $d \geq 1$.
			The orbits $O_y = \{z \in \mathcal{Y}\ |\ z = Gy, \exists G \in \mathcal{G}\}$ of $\mathcal{G}$ in $\mathbb{R}^d$ are the concentric $d$-dimensional hyperspheres about the origin.
			Each of these hyperspheres can be uniquely identified with their radius $\mu > 0$.
			To obtain a $\mathcal{Y}$-valued orbit representative, we multiply $\mu$ by an arbitrary unit $d$-vector $\iota$ to obtain $\mu\iota$.
			For example $y$ lies on the orbit $O_y$ that is the $d$-dimensional hypersphere with radius $\|y\|_2$, and has orbit representative $[y] = \|y\|_2\iota$.
											
			For simplicity, we now first focus on the subgroup $SO(2)$ of $O(2)$ and its action on $\mathbb{R}^2\setminus\{0\}$, which exactly describes the (orientation-preserving) rotations of the circle, and has the same orbits as $O(2)$.
			The reason we focus on $SO(2)$, is because its group acts freely on each concentric circle.
			As a consequence, every element in the group can be uniquely identified with an element on the unit circle $S^1$ (and in fact on every orbit).
			We choose to identify the identity element with $\iota$, and we identify every element of $SO(2)$ with the element on the circle that we obtain if that rotation is applied to $\iota$.
			We denote this induced group action of the unit circle $S^1$ on $\mathcal{Y}$ by $\circ$.
							
			Under this bijection between the group and the unit circle, we can define our inversion kernel map $\gamma$ as $\gamma(y) = y/\|y\|_2$, which may be viewed as the group element that rotates $\iota$ to $y/\|y\|_2$.
			To see that $\gamma$ is indeed an inversion kernel, observe that 
			\begin{align}\label{exm:eq:gamma}
				\gamma(y)[y] 
					= y/\|y\|_2 \circ \iota\|y\|_2
					= [(y/\|y\|_2) \circ \iota]\|y\|_2 
					= (y/\|y\|_2)\|y\|_2  = y,
			\end{align}
			where the second equality follows from the fact that the action of $(y/\|y\|_2)$ on $\iota$, rotates $\iota$ to $y/\|y\|_2$.
			Invariance of a $\mathcal{Y}$-valued random variable $Y$ under $\mathcal{G}$, also known as sphericity, can then be formulated as `$\gamma(Y)$ is uniform on $S^1$'. 
			
			For $\mathcal{O}(2)$ or the general $d > 2$ case, the group action is no longer free on each orbit.
			As a result there may be multiple group actions that carry $\iota\|y\|_2$ to a point $y$ on the hypersphere.
			While this may superficially seem like a potentially serious issue, we may simply view $\gamma(y)$ as uniformly drawn from all the `rotations' that carry $\iota\|y\|_2$ to $y$.
			As a result, the only difference is that \eqref{exm:eq:gamma} will now hold almost surely, which suffices for our purposes.
						
		\subsection{Neyman--Pearson optimal e-values for $\mathbb{Q}$ on the sample space: the $t$-test and its generalizations}\label{sec:LR_spherical_Y}
			Suppose that $Y \sim \mathcal{N}_d(\mu\iota, I)$ on $\mathbb{R}^d \setminus \{0\}$, $\mu > 0$ under the alternative and $Y$ is $\mathcal{G}$ invariant under the null.
			Let $\mathcal{G}$ be some compact group of orthonormal matrices; a subgroup of $\mathcal{O}(d)$.
							
			Here, we conveniently have the Lebesgue measure as a $\mathcal{G}$ invariant reference measure, so that we may apply Proposition \ref{prp:invariant_reference}, which means we only need to consider the classical Gaussian density with respect to the Lebesgue measure when deriving optimal e-values:
			\begin{align*}
				q(y) := 1/(2\pi)^{d/2}\exp\left\{-\frac{1}{2}\|y - \iota\mu\|_2^2\right\}.
			\end{align*}
			
			By Corollary \ref{cor:lehman-stein}, the Neyman--Pearson optimal test rejects at level $\alpha$ when
			\begin{align*}
				1/(2\pi)^{d/2}\exp\left\{-\frac{1}{2}\|y - \iota\mu\|_2^2\right\} &> q_{\alpha}^{\overline{G}}\left(1/(2\pi)^{d/2}\exp\left\{-\frac{1}{2}\|\overline{G}y - \iota\mu\|_2^2\right\}\right),
			\end{align*}
			where $\overline{G}$ is uniformly distributed on $\mathcal{G}$.
			This is equivalent to
			\begin{align*}
				-y'y + 2\mu\iota'y - \mu^2 &> q_{\alpha}^{\overline{G}}\left(-y'y + 2\mu\iota'\overline{G}y - \mu^2\right)
			\end{align*}
			so that the Neyman--Pearson optimal e-value / test may be concisely written as
			\begin{align}\label{ineq:t-test_correlation}
				\varepsilon^{\textnormal{NP}}
					= 1/\alpha \times \mathbb{I}\{\iota'y &> q_{\alpha}^{\overline{G}}\left(\iota'\overline{G}y\right)\},
			\end{align}
			which is independent of $\mu$, so that this test is optimal against $\mathcal{N}(\iota\mu, I)$, uniformly in $\mu$.
			
			\begin{rmk}
				For $\mathcal{G} = \mathcal{O}(d)$, the test \eqref{ineq:t-test_correlation} is equal to the $t$-test by Theorem 6 in \cite{koning2023more}.
				This matches the discussion in the final paragraphs of \citet{lehmann1949theory}, who also conclude that the $t$-test is uniformly most powerful for testing spherical invariance against $\mathcal{N}_d(\mu\iota, I)$, $\mu > 0$.
				
				If $\mathcal{G}$ is a subgroup of $\mathcal{O}(d)$, this test may be viewed as a generalization of the $t$-test under weaker conditions; see \citet{efron1969student} for an example in case of a group of sign-flips (diagonal matrices with diagonal elements in $\{-1, 1\}$).
				Our results here show that the approach by \citet{efron1969student} is most powerful for this sign-flipping group against Gaussianity.
			\end{rmk}

			\begin{rmk}[Optimality of the $t$-test beyond \citet{lehmann1949theory}]\label{rmk:beyond_LS}
				If $\mathcal{G} = \mathcal{O}(d)$, then the $t$-test \eqref{ineq:t-test_correlation} may be reformulated as
				\begin{align*}
					1/\alpha \times \mathbb{I}\{\iota'y / \|y\|_2 > q_{\alpha}^{\overline{G}}\left(\iota'\overline{G}\iota\right)\},
				\end{align*}
				as $q_{\alpha}^{\overline{G}}\left(\iota'\overline{G}y\right) 
						= q_{\alpha}^{\overline{G}}\left(\iota'\overline{G}\iota\|y\|_2\right)
						= \|y\|_2 q_{\alpha}^{\overline{G}}\left(\iota'\overline{G}\iota \right)$.
				Here, $\iota'y / \|y\|_2$ may be interpreted as the correlation coefficient between $\iota$ and $y$.
				
				Now, as the rejection event does not change if we apply a strictly increasing function to both sides, we may even conclude that the $t$-test is Neyman--Pearson-optimal for testing spherical invariance against \emph{any alternative with a density that is increasing in the correlation coefficient $\iota'y/\|y\|_2$.}
				This generalizes the result of \citet{lehmann1949theory}, who only conclude optimality against Gaussian location shifts.
			\end{rmk}

		\subsection{Log-optimal e-value}
			Following Corollary \ref{cor:log-optimal} and Proposition \ref{prp:invariant_reference}, the log-optimal e-value for $\mathcal{G}$-invariance against $\mathcal{N}(\iota\mu, I)$ is
			\begin{align}\label{eq:LR_spherical}
				\varepsilon^{\textnormal{log}}(y)
					= \frac{q(y)}{\mathbb{E}_{\overline{G}}[q(\overline{G}y)]}
					= \frac{\exp\left\{\mu y'\iota\right\}}{\mathbb{E}_{\overline{G}} \left[\exp\left\{\mu y'\overline{G}\iota\right\}\right]}.
			\end{align}
			While this may be viewed as the log-optimal version of the $t$-test, it is not uniformly log-optimal in $\mu$.
			
			If $\mathcal{G} = \mathcal{O}(d)$, it is also not uniformly log-optimal in the class of alternatives with densities increasing in $\iota'y/\|y\|_2$ as in Remark \ref{rmk:beyond_LS}.
			Echoing Example \ref{exm:e-value_t-test}, this underlines that there is no unique `e-value version' of the $t$-test, nor even a unique `log-optimal' version of the $t$-test: any e-value based on an alternative density $q$ that is non-decreasing in $\iota'y/\|y\|_2$ may qualify.
			The underlying `problem' is that the original $t$-test is Neyman--Pearson optimal uniformly against a large composite alternative, but specifying a log-optimal variant requires us to be much more specific about our alternative, because we cannot leverage the invariance of the e-value under	monotone transformations of the test statistic as in Remark \ref{rmk:beyond_LS}.

		\subsection{Alternative on orbits}
			We may apply the ideas in Section \ref{sec:optimal_orbit} to slightly enlarge the class of alternatives under which \eqref{eq:LR_spherical} is uniformly log-optimal by passing to the conditional distribution on each orbit.
			The conditional distribution of $Y \sim \mathcal{N}_d(\mu\iota, I)$ on each orbit is proportional to $\exp(\mu\iota'y)$, where $y$ is on the orbit with radius $\|y\|_2$.
			For $\|y\|_2 = 1$, this is also known as the von Mises-Fisher distribution.
			The log-optimal e-value on each orbit $\varepsilon_{|O}^{\textnormal{log}} : O \mapsto [0, \infty]$ indeed corresponds to $\varepsilon^{\textnormal{log}}$:
			\begin{align*}
				\varepsilon_{|O}^{\textnormal{log}}(y)
					= \frac{\exp\left\{\mu y'\iota\right\}}{\mathbb{E}_{\overline{G}} \left[\exp\left\{\mu y'\overline{G}\iota\right\}\right]}.
			\end{align*}	
			As a consequence $\varepsilon^{\textnormal{log}}$ is log-optimal against any mixture over such conditional distributions on orbits.

		\subsection{Alternative on $\mathcal{G}$}
			In this section, we reduce ourselves to $d=2$ and $SO(2)$, so that the group action is free and the group will be easy to represent.
			Following Section \ref{sec:sphericity_background}, we use a bijection between the unit circle $S^1$ and $SO(2)$ to more conveniently formulate the group using $S^1$.
			
			As an alternative on the group, we consider the projected normal distribution $\mathcal{P}\mathcal{N}_2(\mu\iota, I)$.
			This arises as the pushforward of the Gaussian through the inversion kernel: if $Y \sim \mathcal{N}_2(\mu\iota, I)$, then $\gamma(Y) = Y / \|Y\|_2 \sim \mathcal{P}\mathcal{N}_2(\mu\iota, I)$.
			Its density with respect to the uniform distribution on $S^1$ is
			\begin{align}\label{eq:projected_normal}
				\frac{\exp\{-\tfrac{1}{2}\mu^2\}}{2\pi}\left(1 + \mu \iota'v\frac{\Phi(\mu \iota'v)}{\phi(\mu \iota'v)}\right),
			\end{align}
			where $v \in S^1$, $\Phi$ is the normal cdf and $\phi$ the pdf (Presnell et al., 1998; Watson, 1983).
			For $\mu = 0$, this reduces to $1/(2\pi)$; the Haar-density.
			As a consequence, log-optimal e-value for testing the Haar measure against this projected normal distribution is simply the likelihood ratio between \eqref{eq:projected_normal} and $1/(2\pi)$:
			\begin{align*}
				\varepsilon_{S^1}^{\textnormal{log}}
					= \exp\{-\tfrac{1}{2}\mu^2\}\left(1 + \mu \iota'v\frac{\Phi(\mu \iota'v)}{\phi(\mu \iota'v)}\right).
			\end{align*}
			This may also be expressed as an e-value on $\mathcal{Y}$ by mapping through the inversion kernel:
			\begin{align*}
				\varepsilon_{S^1}^{\textnormal{log}}(\gamma(y))
					&= \exp\{-\tfrac{1}{2}\mu^2\}\left(1 + \mu \iota'\gamma(y)\frac{\Phi(\mu \iota'\gamma(y))}{\phi(\mu \iota'\gamma(y))}\right) \\
					&= \exp\{-\tfrac{1}{2}\mu^2\}\left(1 + \mu \iota'y / \|y\|_2\frac{\Phi(\mu \iota'y / \|y\|_2)}{\phi(\mu \iota'y / \|y\|_2)}\right)
			\end{align*}
			which is an increasing function in $\iota'y / \|y\|_2$ if $\mu > 0$. 
			
		\subsection{Permutations and softmax}\label{sec:softmax}
			The log-optimal e-value in \eqref{eq:LR_spherical} is strongly related to the softmax function.
			Indeed, if we choose $\overline{G}$ to be uniform on permutation matrices (which form a subgroup of the orthonormal matrices) and choose the unit vector $\iota = (1, 0, \dots, 0)$, then \eqref{eq:LR_spherical} becomes
			\begin{align}\label{eq:softmax}
				\frac{\exp\left\{\mu y_1\right\}}{\tfrac{1}{d}\sum_{i = 1}^d\exp\left\{\mu y_i\right\}}.
			\end{align}
			This is exactly the softmax function with `inverse temperature' $\mu \geq 0$.
			Hence, the softmax function can be viewed as a likelihood ratio statistic for testing exchangeability (permutation invariance) against $\mathcal{N}((\mu, 0, \dots, 0), I)$.
			
			\begin{rmk}\label{rmk:softrank}
				A related e-value appears in unpublished early manuscripts of \cite{wang2022false} and \cite{ignatiadis2023values}, who consider a `soft-rank' e-value of the type $\varepsilon_T$ as in \eqref{eq:exact_e_form} with the choice of statistic
				\begin{align}\label{eq:softrank_statistic}
					T(y)
						= \frac{\exp(\kappa y_1) - \exp(\kappa \min_j y_j)}{\kappa},
				\end{align}
				under exchangeability, for some inverse temperature $\kappa > 0$.
				
				Interestingly, this `soft-rank' e-value for $\kappa = \mu$ is larger than the softmax e-value \eqref{eq:softmax} if and only if the softmax e-value is larger than 1.
				In fact, the same holds if we replace $\exp(\kappa \min_j y_i)$ by any positive constant $c$, and the relationship flips if $c$ is negative.
				For a positive constant $c$, we would therefore expect the `soft-rank' e-value to be more volatile.
			\end{rmk}

		\subsection{Testing sign-symmetry}\label{sec:symmetry}
			Suppose $\mathcal{Y} = \mathbb{R}$ and $\mathcal{G} = \{-1, 1\}$.
			Then, invariance of $Y$ under $\mathcal{G}$ is also known as `symmetry' about 0, defined as $Y \overset{d}{=} -Y$.
			For testing symmetry against our normal location model with $\iota = 1$, the log-optimal e-value becomes
			\begin{align*}
				\exp\{\mu \iota'y\} / \mathbb{E}_{\overline{G}} \exp\{\mu\iota'\overline{G}y\}
					= 2\exp\{\mu y\} / \left[\exp\{\mu y\} + \exp\{-\mu y\}\right],
			\end{align*}
			This can be generalized to $\mathcal{Y} = \mathbb{R}^d$ and $\mathcal{G} = \{-1, 1\}^d$ and $\iota = d^{-1/2}(1, \dots, 1)'$.
			The log-optimal e-value becomes
			\begin{align}\label{e-value_sign}
				\exp\{\mu \iota'y\} / \mathbb{E}_{\overline{g}} \exp\{\mu\overline{g}'y\}
					= \prod_{i=1}^d \exp\{d^{-1/2}\mu y_i\} / \mathbb{E}_{\overline{g}_i} \exp\{d^{-1/2}\mu\overline{g}_iy_i\},
			\end{align}
			where $\overline{g}$ is a $d$-vector of i.i.d. Bernoulli distributed random variables on $\{-1, 1\}$ with probability .5.

			\begin{rmk}
				A related e-value can be derived from \cite{de1999general},
				\begin{align*}
					\exp\{Z - Z^2/2\}.
				\end{align*}
				This object can be connected to our likelihood ratio, by simply normalizing it by $\mathbb{E}_{\overline{g}}[\exp\{\overline{g}Z - (\overline{g}Z)^2/2\}]$:
				\begin{align*}
					\exp\{Z - Z^2/2\} / &\mathbb{E}_{\overline{g}}[\exp\{\overline{g}Z - (\overline{g}Z)^2/2\}] \\
						&= 
					2\exp\{Z - Z^2/2\} / \left[\exp\{-Z - Z^2/2\} + \exp\{Z - Z^2/2\}\right] \\
						&= 2\exp\{Z\} / \left[\exp\{-Z\} + \exp\{Z\}\right].
				\end{align*}
				This transformation makes the resulting e-value exact by Theorem \ref{thm:essentially_complete}, so that our e-value for sign-symmetry can be interpreted as an exact variant of the \cite{de1999general}-style e-value.
				This was also observed by \citet{vovk2024nonparametric}.
				
				Moreover, \cite{ramdas2022admissible} characterize the class of admissible e-processes for testing symmetry, and show that the e-process based on \cite{de1999general} is inadmissible.
				This inadmissibility is also visible in our simulations, where we find it is strongly dominated by ours. 
			\end{rmk}
			
			\begin{rmk}[Relationship to \citet{vovk2024nonparametric}]
				\citet{vovk2024nonparametric} also study the e-value \eqref{e-value_sign}.
				While they motivate this e-value from its reminiscence to the e-value in the Gaussian vs Gaussian setting, we show that it is in fact optimal for sign-symmetry a Gaussian location-shift.
				They also consider a particular sign-e-value, that relies on the number of positive signs.
				This may be viewed as mapping the data through the inversion kernel, and then constructing an e-value based on a particular statistic (the number of positive signs).
				A third e-value they consider relies on the number of ranks of observations with positive signs.
				This may be viewed as considering invariance under a group of both permutations and sign-flips, then mapping through the inversion kernel to the rank-sign combinations, and then deriving an e-value based on a particular statistic.
			\end{rmk}

	\section{Impoverishing filtrations}\label{sec:impoverishing}
			In the context of exchangeability, Example \ref{exm:seq_exch_degenerate} recovers the result that no powerful test martingales exist.
			Instead of passing to an e-process, as discussed in Section \ref{sec:e-processes} and Remark \ref{rmk:ergodic}, \citet{vovk2021testing} considers moving to a less-informative, `impoverished' filtration by passing to the ranks of the data.
			The practical implication is that we may no longer look at the full data, but only at the ranks.
			In exchange, it turns out that we may recover powerful martingales.
			
			In this section, we show how the impoverishment of a filtration works in the more general context of group invariance, for general statistics and for statistics that mimic the role that the ranks play in exchangeability.			
			This relies on two key ingredients: a subgroup $\mathcal{F} \subseteq \mathcal{G}$ and a statistic $H : \mathcal{X} \to \mathcal{Z}$ for which the subgroup induces a group action on its codomain.
			To induce a group action on the codomain, we require for each $x^1, x^2 \in \mathcal{X}$,
			\begin{align*}
				H(x^1) = H(x^2) \implies H(Fx^1) = H(Fx^2), \textnormal{ for all } F \in \mathcal{F}.
			\end{align*}
			Writing $z = H(x)$, the group action on the codomain is then defined as $Fz = H(Fx)$, $x \in \mathcal{X}$.
			This condition holds if and only if $H$ is equivariant under this group action: $FH(x) = H(Fx)$, for every $x \in \mathcal{X}$, $F \in \mathcal{F}$ \citep{eaton1989group}.
			For this reason, we will refer to it as an equivariant statistic.
			
			Proposition \ref{prp:equivariance} captures the key idea: measuring evidence against $\mathcal{F}$ invariance of $Z := H(X)$ also measures evidence against $\mathcal{G}$ invariance of $X$.
			As a consequence, we may apply all our methodology to testing $\mathcal{F}$ invariance of $Z$ and still obtain a valid e-value for $\mathcal{G}$ invariance of $X$.
			
			\begin{prp}\label{prp:equivariance}
				Let $\mathcal{F}$ be a subgroup of $\mathcal{G}$ and let $H : \mathcal{X} \to \mathcal{Z}$ be $\mathcal{F}$-equivariant.
				Then, an e-value that is valid for $\mathcal{F}$ invariance of $H(X)$ is also valid for $\mathcal{G}$ invariance of $X$.
			\end{prp}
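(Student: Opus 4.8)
The plan is to reduce everything to a single observation: $\mathcal{G}$ invariance of $X$ forces $\mathcal{F}$ invariance of $Z := H(X)$, after which validity transfers immediately by definition and no further computation is needed. The whole argument lives at the level of distributions of random variables, so I would lean on the equivalent characterizations collected in Lemma \ref{lem:invariance_random_variable} rather than on probabilities of individual events.

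First, I would record that $\mathcal{G}$ invariance is inherited by any subgroup. Since $\mathcal{F} \subseteq \mathcal{G}$, if $X$ is $\mathcal{G}$ invariant then by condition 2 of Lemma \ref{lem:invariance_random_variable} we have $X \overset{d}{=} GX$ for every $G \in \mathcal{G}$, and in particular $X \overset{d}{=} FX$ for every $F \in \mathcal{F}$; that is, $X$ is $\mathcal{F}$ invariant.

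Second --- and this is the conceptual core --- I would push this invariance through the equivariant statistic $H$. Because equality in distribution is preserved under any fixed measurable map, $X \overset{d}{=} FX$ yields $H(X) \overset{d}{=} H(FX)$ for each $F \in \mathcal{F}$. Equivariance of $H$ then rewrites the right-hand side as $H(FX) = FH(X) = FZ$, where $FZ$ denotes the induced action on $\mathcal{Z}$, which is well-defined precisely because $H$ is equivariant. Combining these gives $Z \overset{d}{=} FZ$ for every $F \in \mathcal{F}$, so by condition 2 of Lemma \ref{lem:invariance_random_variable} again, the image $Z = H(X)$ is $\mathcal{F}$ invariant.

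Finally, I would close by invoking the definitions. Let $\varepsilon$ be valid for $\mathcal{F}$ invariance of $Z$, so that $\mathbb{E}^{\mathbb{P}}[\varepsilon(H(X))] \leq 1$ for every law $\mathbb{P}$ under which $H(X)$ is $\mathcal{F}$ invariant. By the two steps above, every $\mathcal{G}$ invariant law of $X$ belongs to this class, so the same bound holds over the (a priori smaller) collection of $\mathcal{G}$ invariant laws; hence $\varepsilon \circ H$ is valid for $\mathcal{G}$ invariance of $X$. I expect the only genuinely delicate point to be the middle step: one must verify that the induced $\mathcal{F}$ action on $\mathcal{Z}$ is exactly the one implicitly meant by the phrase \emph{$\mathcal{F}$ invariance of $H(X)$}, which is guaranteed by the equivariance/well-definedness condition stated just before the proposition. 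Everything else is bookkeeping.
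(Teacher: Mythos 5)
Your proposal is correct and follows essentially the same route as the paper's own proof: subgroup inheritance of invariance, pushing the invariance through the equivariant statistic to get $\mathcal{F}$ invariance of $H(X)$, and then concluding by the nesting of the two hypotheses. You simply spell out the middle step (via condition 2 of Lemma \ref{lem:invariance_random_variable}) in more detail than the paper does.
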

			\begin{proof}
				$\mathcal{G}$ invariance of $X$ implies $\mathcal{F}$ invariance of $X$, as $\mathcal{F}$ is a subgroup of $\mathcal{G}$.
				Moreover, $\mathcal{F}$ invariance of $X$ implies $\mathcal{F}$ invariance of $H(X)$, as this group action is well-defined through the $\mathcal{F}$-equivariance of $H$.
				Hence, the hypothesis of $\mathcal{F}$ invariance of $H(X)$ is at least as large as that of $\mathcal{G}$ invariance of $X$.
				Hence, an e-value that is valid for the former is also valid for the latter.
			\end{proof}

			To apply this in the sequential context described in Section \ref{sec:sequential_data}, we must be careful to consider a sequence $(H_n)_{n \geq 0}$ of statistics that are appropriately glued together with a projection on its codomain $\mathcal{Z}^{n-1}$: $H_{n-1}(x^{n-1}) = \textnormal{proj}_{\mathcal{Z}^{n-1}}(H_n(x^{n}))$, and a nested sequence of subgroups $\mathcal{F}_n \subseteq \mathcal{G}_n$.
			In Example \ref{exm:reducing_to_batches}, we illustrate this approach by showing how we may reduce from exchangeability (continuing from Example \ref{exm:seq_exch_degenerate}) to within-batch exchangeability (continuing from Example \ref{exm:within_batch_exch}) by selecting a particular equivariant statistic.
			
			\begin{exm}[Reducing to within-batch-exchangeability]\label{exm:reducing_to_batches}
				Suppose $X^n = (Y_1, \dots, Y_n)$ and that $X^n$ is exchangeable.
				We now consider a statistic $H_n$ that effectively censors $X^n$ so that we only observe it in batches.
				Let $b_1, b_2, \dots$ denote the observation numbers at which a batch is completed, and $B_n$ the number of completed batches at time $n$.
				Then, we define the statistic equal to the most recently arrived batch $H_n(X^n) = X^{b_i}$, and its codomain $\mathcal{Z} = \mathcal{X}^{b_i}$, for all $b_i \leq n < b_{i +1}$, $i < B_n$.
				
				To induce a group action, we pass from the group $\mathfrak{P}_n$ of all permutations to its subgroup $\mathcal{F}_n = \mathfrak{P}^1 \times \mathfrak{P}^2 \times \cdots \times \mathfrak{P}^{B_n} \times I$, where $\mathfrak{P}^i$ permutes the observations within the $i$th batch of data, and $I$ acts as the identity on the yet-to-be-completed batch.
				
				It remains to verify that this indeed induces a group action.
				This means we need to verify $H_n(x_1^n) = H_n(x_2^n)$ implies $H_n(Fx_1^n) = H_n(Fx_2^n)$ for all $F \in \mathcal{F}_n$ and $x_1^n, x_2^n \in \mathcal{X}^n$.
				This is equivalent to checking whether $x_1^{b_i} = x_2^{b_i}$ implies $H_n(Fx_1^n) = H_n(Fx_2^n)$, where $b_i \leq n < b_{i+1}$, $i < B_n$.
				This is indeed satisfied, because $F$ only acts on the already completed batches.
			\end{exm}
			
		\subsection{Reduction to a single orbit}\label{sec:reduction_to_one_orbit}
			Recall from Section \ref{sec:e-processes} that admissible e-processes for group invariance may be viewed as measurable infimums over orbit-wise martingales.
			If there exists just a single orbit, then this infimum drops out so that admissible e-processes are martingales.
			While settings with just a single orbit may seem practically irrelevant, we can \emph{reduce} to a single orbit by finding an appropriate subgroup $\mathcal{F} \subseteq \mathcal{G}$ and accompanying $\mathcal{F}$-equivariant statistic $H$ such that $H(X)$ has only a single orbit under $\mathcal{F}$.
			
			While this approach may be applied to other statistics, we focus on a particularly attractive example of such a statistic: the unique inversion kernel $\gamma : \mathcal{X} \to \mathcal{G}$, which is an equivariant (possibly randomized) statistic \citep{kallenberg2017random}.
			Such an inversion kernel maps to the group, and the group acting on itself trivially has a single orbit: the group itself.
			Moreover, $\mathcal{G}$ is a subgroup of itself, so that Proposition \ref{prp:equivariance} applies, and we may measure evidence against $\mathcal{G}$ invariance of $X$ by measuring evidence against $\mathcal{G}$ invariance of $\gamma(X)$.
			
			By passing through such a statistic, we are effectively observing a draw $\widetilde{G} := \gamma(X)$ from the group itself.
			Recall from Section \ref{sec:optimal_on_group} that an e-value on the group, $\varepsilon : \mathcal{G} \to [0, \infty]$, is valid for $\mathcal{G}$ invariance if and only if
			\begin{align*}
				\mathbb{E}_{\overline{G}}[\varepsilon(\overline{G})] \leq 1.
			\end{align*}
			Analogously, an e-process $(\varepsilon_n)_{n \geq 0}$ is valid with respect to some filtration if and only if
			\begin{align*}
				\mathbb{E}_{\overline{G}}[\varepsilon_\tau(\overline{G})]
					\leq 1,
			\end{align*}	
			for every stopping time $\tau$ that is adapted to the same filtration.
			
			As mentioned, any admissible e-process for such a simple hypothesis is a martingale \citet{ramdas2022admissible}.
			This means such an admissible e-process may be induced as a Doob martingale in the style of \citet{koning2025sequentializing}, as discussed in Section \ref{sec:e-process from e-value} for an arbitrary filtration $(\mathcal{I}_n)_{n \geq 0}$ through			
			\begin{align*}
				\varepsilon_n = \mathbb{E}_{\overline{G}}[\varepsilon(\overline{G}) \mid \mathcal{I}_n].
			\end{align*}
			
			Instead of such a backwards-induction of a martingale, we may forwards-construct a martingale by imposing some additional structure as in Section \ref{sec:sequential_data}.
			In Section \ref{sec:sequential_data}, we assumed that we are to observe an increasingly rich sequence of data $(X^n)_{n \geq 0}$, $X^{n} = \textnormal{proj}_{\mathcal{X}^{n}}(X^{n+k})$, $n, k \geq 0$.
			Moreover, we introduced a nested sequence of groups $(\mathcal{G}_n)_{n \geq 0}$, $\mathcal{G}_n \subseteq \mathcal{G}_{n+1}$.
			These two sequences were made compatible with each other by assuming that each projection map $\textnormal{proj}_{\mathcal{X}^{n}}$ is $\mathcal{G}_n$-equivariant.
			We now additionally assume that the orbit representatives are chosen in a compatible manner: $[\textnormal{proj}_{\mathcal{X}^{n}}(x^{n+k})] = \textnormal{proj}_{\mathcal{X}^{n}}([x^{n+k}])$.
			This makes the inversion kernels compatible with each other, as it implies\footnote{Since $\gamma_n(x^n)[x^n] \overset{a.s.}{=} x^n = \textnormal{proj}_{\mathcal{X}^n}(x^{n+k}) \overset{a.s.}{=} \textnormal{proj}_{\mathcal{X}^n}(\gamma_{n+k}(x^{n+k})[x^{n+k}])$, which, if $\gamma_{n+k}(x^{n+k}) \in \mathcal{G}_n$ equals $\gamma_{n+k}(x^{n+k})\textnormal{proj}_{\mathcal{X}^n}([x^{n+k}]) = \gamma_{n+k}(x^{n+k})[\textnormal{proj}_{\mathcal{X}^n}(x^{n+k})]= \gamma_{n+k}(x^{n+k})[x^n]$.}
			\begin{align*}
				\gamma_{n+k}(x^{n+k}) \in \mathcal{G}_{n} \implies \gamma_{n}(x^{n})[x^{n}] \overset{a.s.}{=} \gamma_{n+k}(x^{n+k})[x^{n}],
			\end{align*}
			so that $\gamma_{n+k}$ is an inversion kernel for $\mathcal{G}^n$ acting on $\mathcal{X}^n$.
			Assuming the group action is free, this implies $\gamma_n(x) = \gamma_{n+k}(x)$, for $x \in \mathcal{X}^n$, by the uniqueness of the inversion kernel.
			If the group action is not free, $\gamma_{n+k}$ may be viewed as a randomized statistic, which then shares the same unique distribution as $\gamma_n$ on $\mathcal{G}^n$.
			
			We may now apply the machinery from Section \ref{sec:sequential_data} to derive a test martingale by constructing a conditional e-value for $\gamma_n(X^n)$ conditional on $\gamma_{n-1}(X^{n-1})$.
			We illustrate this process in Example \ref{exm:rank_inversion_kernel_sequential} and \ref{exm:sphere_inversion_kernel_sequential}.
			
			\begin{exm}[Exchangeability and ranks]\label{exm:rank_inversion_kernel_sequential}
				Suppose we have data $X^n = (Y_1, Y_2, \dots, Y_n)$ and $X^n$ is exchangeable for each $n$; invariant under the group $\mathcal{G}_n$ of permutations.
				This means $\mathcal{G}_{n-1}$ is a subgroup of $\mathcal{G}_n$.
				Next we must select some orbit representative, which in the case of exchangeability comes down to selecting some canonical order of the elements.
				If the elements are real-valued, or admit some other natural ordering, then it makes sense to sort the elements accordingly and use this as the orbit representative, but any ordering suffices.
				
				For example, suppose that $X^n = 7314$, which we use as a shorthand for $Y_1 = 7, Y_2 = 3, Y_3 = 1, Y_4 = 4$.
				Suppose the orbit representative is selected as $1347$, then $\textnormal{Rank}_n(X^n) = 4213$.
				The ranks $4213$ may be interpreted as encoding the permutation that instructs how the elements of the orbit representative $1347$ must be permuted in order to recover $X^n = 7314$: $4213$ states that the 4th element of $1347$ should be placed in the first position, the 2nd element of $1347$ in the second position, the 1st element in the third position and the 3rd element in the fourth position.
				That is, it encodes a permutation group action `$\times$':  $4213 \times 1347 = 7314$.
				This means $\textnormal{Rank}_n(X^n) \times [X^n] = X^n$, which is exactly the definition of an inversion kernel $\gamma_n = \textnormal{Rank}_n$.
				
				Now, let us consider the distribution of $\gamma_n(X^n) \mid \gamma_{n-1}(X^{n-1})$.
				Conditional on the ranking $\textnormal{Rank}_{n-1}(X^{n-1}) = 312$ of the first $(n - 1)$ elements, we have under exchangeability that the ranking of $n$ elements $\textnormal{Rank}_{n}(X^{n})$ is uniform on $\{3124, 4123, 4132, 4231\}$.
				Hence, constructing a conditional e-value is equivalent to constructing an e-value that is valid under a uniform distribution on this set.
				
				In the context of rank-based testing of exchangeability, it is common to focus on the rank of the most recently arrived element.
				Working out the above for each possible conditioning, it is straightforward to show that this `last rank' is uniform on $\{1, \dots, n\}$, independently of $\textnormal{Rank}_{n-1}(X^{n-1})$.
				Moreover, conditionally on $\textnormal{Rank}_{n-1}(X^{n-1})$, this last rank entirely determines $\textnormal{Rank}_{n}(X^{n})$.
				Hence, the last rank is in bijection with the distribution of $\textnormal{Rank}_{n}(X^{n})$ given $\textnormal{Rank}_{n-1}(X^{n-1})$, so that we may equivalently construct a conditional e-value by constructing an e-value that is valid for the last element of $\textnormal{Rank}_{n}(X^{n})$ being $\textnormal{Unif}(\{1, \dots, n\})$.	
				The above shows what underlies this last-rank result that is popularly used in conformal prediction, and how it generalizes to other settings.
			\end{exm}
			
			\begin{exm}[Sequential sphericity]\label{exm:sphere_inversion_kernel_sequential}
				We now move to the setting where $X^n = (Y_1, \dots, Y_n)$ is a spherical random $n$-vector in $\mathbb{R}^n$.
				That is, it is invariant under the orthogonal group $\mathcal{G}_n$ of $n \times n$ orthonormal matrices under matrix multiplication.
				Let us choose the statistic $H_n(X^n) = X^n / \|X^n\|_2$, which maps from $\mathbb{R}^n$ to the unit sphere in $n$ dimensions.
				Note that, $H_n$ is equivariant: $H_n(GX^n) = GX^n / \|GX^n\|_2 = GX^n / \|X^n\|_2 = GH_n(X^n)$, for any $G \in \mathcal{G}_n$, so that it indeed induces a group action on the unit sphere.
				Under this group action, it has just a single orbit: the unit sphere itself.
				Furthermore, we have the required projection, as we may recover $H_{n-1}(X^{n-1})$ from $H_n(X^n)$ by linearly projecting it onto $\mathbb{R}^{n-1}$ and then dividing by the norm of the resulting vector.
				
				Under sphericity of $X^n$, we have that $H_n(X^n)$ is spherical on the unit hypersphere.
				Conditional on $H_{n-1}(X^{n-1})$, $H_n(X^n)$ is distributed on a semi-unit circle, of points whose first $(n-1)$ coordinates are in the direction of $H_{n-1}(X^{n-1})$.
				Hence, a conditional valid e-value is an e-value that is valid for this distribution on this semi-circle.
				We may generalize this to instead observing, say, two coordinates at each point in time, so that this instead becomes a certain distribution on a semi-sphere.
				
				The standard $t$-test setting is recovered by considering the statistic $\iota_n'H_n(X^n)$ for the unit vector $\iota_n = (1, \dots, 1)/\sqrt{n}$.
				Its distribution and conditional distribution is worked out in detail in Appendix A of \citet{koning2025sequentializing}.
			\end{exm}	
	
	\section{Invariance through statistic}\label{appn:invariance_through}
		\subsection{Examples}
			We illustrate the difference between invariance and invariance through a statistic in two examples.
			In Example \ref{exm:exchangeable}, the random variable is invariant.
			In Example \ref{exm:exchangeable_through_T}, the underlying random variable is not invariant, but looks invariant through certain statistics.
			
			\begin{exm}[Invariance under permutations: exchangeability]\label{exm:exchangeable}
				Suppose we have a single bag and fill it with the numbers 1, 2, 3 and 4.
				We now sample uniformly without replacement from this bag and arrange the numbers in the order they were drawn.
				As each order has the same probability, we say that this outcome is \emph{exchangeable}: invariant under all permutations of the numbers $\{1, 2, 3, 4\}$.
			\end{exm}

			\begin{exm}[Not exchangeable, but exchangeable through a statistic]\label{exm:exchangeable_through_T}
				Suppose we have two bags.
				We fill one with the numbers 1 and 2, and the other with numbers 3 and 4.
				We start by picking a bag with equal probability, and then sequentially draw both numbers from the bag in an exchangeable manner.
				Next, we take the other bag and do the same, after which we arrange the numbers in the order they were drawn.
				
				Here, the resulting order of the numbers is not invariant under all permutations: out of 24 permutations, only the 8 orders 1234, 1243, 2134, 2143, 3412, 3421, 4312 and 4321 can occur.
				The order does \emph{look exchangeable} through the statistic that returns only the first position.
				Indeed, every number is equally likely to land in the first position both under our sampling process, and if we had used an exchangeable sampling process.
				
				The order also looks exchangeable through the statistic $S$ that returns the relative ranks of the first two positions: $S(12\cdot\cdot) = 12$, $S(21\cdot\cdot) = 21$, $S(34\cdot\cdot) = 12$, $S(43\cdot\cdot) = 21$.
				This is because the ranks 12 and 21 happen with equal probability both under our sampling process, and under an exchangeable sampling process.
				
				While we only consider a single orbit in this example --- namely the permutations of 1234 ---  the example extends to multiple orbits.
				Indeed, we may view the numbers 1234 as determined by some preceding sampling process.
			\end{exm}	
			
		\subsection{Counterexample invariance through statistic}\label{appn:counterexample}
			Following Remark \ref{rmk:counterexample}, we discuss a counterexample that shows the condition `$S(\overline{G}Y) \overset{d}{=} S(Y)$', for some test statistic $S$, is insufficient to guarantee that the group invariance test with statistic $S$ is valid.
			In fact, we make a stronger assumption, `$S(Y) \overset{d}{=} S(GY)$ for all $G \in \mathcal{G}$' which implies `$S(\overline{G}Y) \overset{d}{=} S(Y)$' and is also insufficient.
			
			Suppose $Y^n = (Y_1, \dots, Y_n)$ is a random variable on $[0, 1]^n$.
			Let us consider the statistic $S : [0, 1]^n \to [0, 1]$ that returns the first element $S(Y^n) = Y_1$.
			We consider the group $\mathcal{G}$ as the group of permutations on $n$ elements.
			Under this group, the condition $S(Y) \overset{d}{=} S(GY)$ for all $G \in \mathcal{G}$, can be interpreted as equality in distribution of the elements of $Y^n$: $Y_1 \overset{d}{=} Y_i$, for every $i = 1, \dots, n$.
			Moreover, it puts no restriction on the dependence structure of the individual elements, which we will exploit.
			
			Suppose $Y_i \sim \text{Unif}[0, 1]$ for each $i = 1, \dots, n$.
			Now, let us describe the dependence structure: $Y_2 = Y_i$ for all $i \geq 2$, and $Y_1$ and $Y_2$ are exchangeable.
			That is $Y^n = (Y_1, Y_2, \dots, Y_2)$, which is not an exchangeable $n$-vector.
			Given some significance level $\alpha = (k - 1)/n$, $k \in \{1, \dots, n - 1\}$, the classical group invariance test based on the statistic $S$ rejects the hypothesis if $S(Y) = Y_1$ exceeds the $k$th largest value in the the multiset $\{Y_1, Y_2, \dots, Y_2\}$.
			As $Y_1$ is either the largest or smallest value in the multiset, the $k$th largest value must equal $Y_2$.
			By the exchangeability of $Y_1$ and $Y_2$, the probability of rejection equals $\text{Pr}(Y_1 > Y_2) = .5$ regardless of $k$, $n$.
			We can then choose $k \geq 2$ and $n$ such that $(k - 1) / n < .5$, such as $k = 2$ and $n = 3$, to ensure $\alpha < .5$, which in turn means $\text{Pr}(Y_1 > Y_2) > \alpha$, so that the group invariance test is not valid.

	\section{Example: exchangeability}\label{sec:exm_permutation}
		In this section, we discuss a highly concrete toy example of permutations on a small and finite sample space.
		While not as statistically interesting as the examples in Section \ref{sec:gaussian}, it is more tangible as the group itself is finite and easy to understand.
	
		\subsection{Exchangeability on a finite sample space}
			Suppose our sample space $\mathcal{Y}$ consists of the vectors $[1, 2, 3]$, $[1, 1, 2]$ and all their permutations.
			As a group $\mathcal{G}$, we consider the permutations on 3 elements, which we will denote by $\{abc, acb, bac, bca, cab, cba\}$.
			For example, $bac$ represents the permutation that swaps the first two elements.
			
			The orbits are then given by all permutations of $[1,2,3]$ and $[1,1,2]$
			\begin{align*}
				O_{[1,1,2]} = \{[1,1,2], [1,2,1], [2,1,1]\},
			\end{align*}
			and
			\begin{align*}
				O_{[1,2,3]} = \{[1,2,3],[1,3,2],[2,1,3],[2,3,1],[3,1,2],[3,2,1]\}.
			\end{align*}
			As $\mathcal{Y}$-valued orbit representatives, we pick the unique element in the orbit that is sorted in ascending order: $[1,1,2]$ and $[1,2,3]$.
			
			For simplicity, let us restrict ourselves to $O_{[1,2,3]}$ first.
			On this orbit, the inversion kernel $\gamma$ is defined as the unique permutation that brings the element $[1,2,3]$ to $z \in O_{[1,2,3]}$.
			Moreover, on this orbit, the null hypothesis then states that $\gamma(Y)$ is uniform on the permutations, which in this case is equivalent to the hypothesis that $Y$ is uniform on $O_{[1,2,3]}$.
			
			Now let us restrict ourselves to $O_{[1,1,2]}$.
			On this orbit, there are multiple permutations that may bring a given element back to $[1,1,2]$.
			For example, both $bac$, as well as the identity permutation $abc$ bring $[1,1,2]$ to itself, so that the group action is not \emph{free}.
			More generally, any permutation that brings $[1,1,2]$ to $z \in O_{[1,1,2]}$, can be preceded by $bac$, and the result still brings $[1,1,2]$ to $z \in O_{[1,1,2]}$.
			Even more abstractly speaking, let $\mathcal{S}_{[y]} = \{G \in \mathcal{G} : G[y] = [y]\}$ be the stabilizer subgroup of $[y]$ (the subgroup that leaves $[y]$ unchanged).
			Then, if $G^* \in \mathcal{G}$ carries $[y]$ to $y$, so does any element in $G^*\mathcal{S}_{[y]}$.
			
			To construct the inversion kernel $\gamma$ on $O_{[1,1,2]}$, let $\overline{S}_{[y]}$ denote a uniform distribution on $\{abc, bac\}$, which is well-defined as $\mathcal{S}_{[y]}$ is a compact subgroup and so admits a Haar probability measure (see Lemma \ref{lem:stabilizer}).
			Moreover, let $G_y$ be an arbitrary permutation that carries $[y]$ to $y$, say $G_{[1,1,2]} = abc$, $G_{[1,2,1]} = acb$ and $G_{[2,1,1]} = cba$.
			Then, we define the inversion kernel as $\gamma(y) = G_y\overline{S}_{[y]}$.
			Concretely, this means that $\gamma([1,1,2]) \sim \text{Unif}(abc, bac)$, $\gamma([1,2,1]) \sim \text{Unif}(acb, bca)$ and $\gamma([2,1,1]) \sim \text{Unif}(cba, cab)$.
			If $Y$ is indeed uniform on $O_{[1,1,2]}$, then $G_Y$ is uniform on $\{abc, acb, cba\}$ and so $\gamma(Y)$ is uniform on $\mathcal{G}$.
			
			The definition of $\gamma$ on the sample space $\mathcal{Y} = O_{[1,2,3]} \cup O_{[1,1,2]}$ is obtained by combining the definitions on the two separate orbits.

	\section{Omitted proofs}\label{sec:technical}
		
		\subsection{Proof of Theorem \ref{thm:e-values}}\label{proof:thm:e-values}
			\begin{proof}
				We prove (i), as (ii) follows analogously.
				Recall from Lemma \ref{lem:invariance_random_variable} that for a $\mathcal{G}$ invariant random variable $Y$, we have $Y \overset{d}{=} \overline{G}[Y]$, so that $\mathbb{E}_{Y}[\varepsilon(Y)] = \mathbb{E}_{[Y]}\mathbb{E}_{\overline{G}}[\varepsilon(\overline{G}[Y])]$, by Tonelli's theorem.
				Moreover, recall that $\overline{G}y \sim \textnormal{Unif}(O_y)$, for fixed $y$.
				
				For the `$\impliedby$' direction, we may simply take the expectation over $[Y]$ on both sides in the right-hand side of (i) to obtain:
				\begin{align*}
					\mathbb{E}_{Y}[\varepsilon(Y)]
						= \mathbb{E}_{[Y]}\mathbb{E}_{\overline{G}}[\varepsilon(\overline{G}[Y])] 
						= \mathbb{E}_{[Y]}\mathbb{E}^{\textnormal{Unif}(O_{[Y]})}[\varepsilon] 
						\leq \mathbb{E}_{[Y]}[1] = 1.
				\end{align*}
				
				For the `$\implies$' direction, we assume $\mathbb{E}_P[\varepsilon] \leq 1$ for every $\mathcal{G}$ invariant probability $P$.
				Fix an orbit $O \in \mathcal{Y} / \mathcal{G}$.
				An example of a $\mathcal{G}$ invariant probability is $\textnormal{Unif}(O)$.
				Hence, $\mathbb{E}^{\textnormal{Unif}(O)}[\varepsilon] \leq 1$.
				As $O$ is arbitrary, this must hold for every $O$.
			\end{proof}
		
		\subsection{Proof of Theorem \ref{thm:gi_p_value}}\label{proof:theorem_traditional}
			\begin{proof}
				Recall that invariance of $Y$ through $T$ means that $T(\overline{G}Y) \mid O_Y \overset{d}{=} T(Y) \mid O_Y$, $O_Y$-almost surely.
				This means there exists a measurable set $\mathcal{A} \subseteq \mathcal{Y} / \mathcal{G}$ with $\mathbb{P}(O_Y \in \mathcal{A}) = 1$, such that for every orbit $O \in \mathcal{A}$,
				\begin{align*}
					T(\overline{G}Y) \mid (O_Y = O) \overset{d}{=} T(Y) \mid (O_Y = O).
				\end{align*}
				Fix an arbitrary orbit $O \in \mathcal{A}$ and let $[z]$ be its orbit representative.
				We then have,
				\begin{align*}
					T(Y) \mid (O_Y = O)
						\overset{d}{=} T(\overline{G}Y) \mid (O_Y = O)
						\overset{d}{=} T(\overline{G}[z]),
				\end{align*}
				where the second equality-in-distribution follows from $\overline{G}Y \mid (O_Y = O) \overset{d}{=} \overline{G}[z] \sim \textnormal{Unif}(O)$.
				Hence, $q_\alpha^{Y}[T(Y) \mid O_Y = O] = q_\alpha^{\overline{G}}[T(\overline{G}[z])]$ and $\mathbb{E}_Y[\varepsilon_\alpha(Y) \mid O_Y = O] = \mathbb{E}_{\overline{G}}[\varepsilon_\alpha(\overline{G}[z])]$.
				Recall
				\begin{align*}
					\varepsilon_\alpha(y)
						= \frac{1}{\alpha} \mathbb{I}\left\{T(y) > q_\alpha^{\overline{G}}[T(\overline{G}y)]\right\} + \frac{c([y])}{\alpha}\mathbb{I}\left\{T(y) = q_\alpha^{\overline{G}}[T(\overline{G}y)]\right\}.
				\end{align*}
				Let $\overline{G}^* \sim \textnormal{Unif}(\mathcal{G})$, independently from $\overline{G}$.
				Then,
				\begin{align*}
					&\mathbb{E}_{\overline{G}^*}[\varepsilon_\alpha(\overline{G}^*[z])] \\
						&= \mathbb{E}_{\overline{G}^*}\left[\frac{1}{\alpha} \mathbb{I}\left\{T(\overline{G}^*[z]) > q_\alpha^{\overline{G}}[T(\overline{G}\overline{G}^*[z])]\right\} + \frac{c([\overline{G}^*[z]])}{\alpha}\mathbb{I}\left\{T(\overline{G}^*[z]) = q_\alpha^{\overline{G}}[T(\overline{G}\overline{G}^*[z])]\right\}\right] \\
						&= \mathbb{E}_{\overline{G}^*}\left[\frac{1}{\alpha} \mathbb{I}\left\{T(\overline{G}^*[z]) > q_\alpha^{\overline{G}}[T(\overline{G}[z])]\right\} + \frac{c([z])}{\alpha}\mathbb{I}\left\{T(\overline{G}^*[z]) = q_\alpha^{\overline{G}}[T(\overline{G}[z])]\right\}\right] \\
						&= \frac{1}{\alpha}\mathbb{P}_{\overline{G}^*}\left(T(\overline{G}^*[z]) > q_\alpha^{\overline{G}}[T(\overline{G}[z])]\right) + \frac{c([z])}{\alpha}\mathbb{P}_{\overline{G}^*}\left(T(\overline{G}^*[z]) = q_\alpha^{\overline{G}}[T(\overline{G}[z])]\right),
				\end{align*}
				where the second equality follows from the fact that $\overline{G}\overline{G}^* \sim \textnormal{Unif}(\mathcal{G})$, and $[Gz] = [z]$ for all $G \in \mathcal{G}$.
				Choose
				\begin{align*}
					c([z]) 
						= \frac{\alpha - \mathbb{P}_{\overline{G}^*}\left(T(\overline{G}^*[z]) > q_{\alpha}^{\overline{G}}\left[T(\overline{G}[z])\right]\right)}{\mathbb{P}_{\overline{G}^*}\left(T(\overline{G}^*[z]) = q_\alpha^{\overline{G}}[T(\overline{G}[z])]\right)},
				\end{align*}
				if $\mathbb{P}_{\overline{G}^*}\left(T(\overline{G}^*[z]) = q_\alpha^{\overline{G}}[T(\overline{G}[z])]\right) > 0$ and $c([z]) = 0$, otherwise.
				This yields $\mathbb{E}_{\overline{G}}[\varepsilon_\alpha(\overline{G}[z])] = 1$, and so $\mathbb{E}_{Y}[\varepsilon_\alpha(Y) \mid O_Y = O] = 1$.
				Since $O \in \mathcal{A}$ was arbitrary, this holds for every $O \in \mathcal{A}$, and so $\mathbb{E}_{Y}[\varepsilon_\alpha(Y) \mid O_Y] = 1$, almost surely.
			\end{proof}
			
		\subsection{Proof of Theorem \ref{thm:essentially_complete}}\label{proof:essentially_complete}
			\begin{proof}
				The 	`$\impliedby$' direction follows from
				\begin{align*}
					\mathbb{E}_{\overline{G}}[\varepsilon_T(\overline{G}y)]
						= \mathbb{E}_{\overline{G}_1}\left[\frac{T(\overline{G}_1y)}{\mathbb{E}_{\overline{G}_2}T(\overline{G}_2\overline{G}_1y)}\right]
						= \mathbb{E}_{\overline{G}_1}\left[\frac{T(\overline{G}_1y)}{\mathbb{E}_{\overline{G}_2}T(\overline{G}_2y)}\right]
						= \frac{\mathbb{E}_{\overline{G}_1}T(\overline{G}_1y)}{\mathbb{E}_{\overline{G}_2}T(\overline{G}_2y)}
						= 1,
				\end{align*}
				and applying Theorem \ref{thm:e-values}.
				For the `$\implies$' direction, assume $\varepsilon$ is some exact e-value for $\mathcal{G}$ invariance.
				Choose $T = \varepsilon$, so that
				\begin{align*}
					\varepsilon_\varepsilon(y)
						= \frac{\varepsilon(y)}{\mathbb{E}_{\overline{G}} \varepsilon(\overline{G}y)}
						= \varepsilon(y),
				\end{align*}
				where the final equality follows from the fact that $\mathbb{E}_{\overline{G}} \varepsilon(\overline{G}y) = 1$, by Theorem \ref{thm:e-values}.
			\end{proof}	
		
		\subsection{Proof of Proposition \ref{prp:e-value_invariance_through}}\label{proof:e-value_invariance_through}
			\begin{proof}
				The assumption $\mathbb{E}_Y[T(Y) \mid O_Y] = \mathbb{E}[T(\overline{G}Y) \mid O_Y]$, almost surely means that there exists a subset $\mathcal{A} \subseteq \mathcal{Y} / \mathcal{G}$, with $\mathbb{P}_Y(O_Y \in \mathcal{A}) = 1$, such that for every $O \in \mathcal{A}$,
				\begin{align*}
					\mathbb{E}_Y[T(Y) \mid O_Y = O] 
						= \mathbb{E}_Y[T(\overline{G}Y) \mid O_Y = O].
				\end{align*}
				Fix an arbitrary orbit $O \in \mathcal{A}$ and let $[z]$ be its orbit representative.
				We then have,
				\begin{align*}
					\mathbb{E}_Y[T(Y) \mid O_Y = O]
						= \mathbb{E}_{\overline{G}, Y}[T(\overline{G}Y) \mid O_Y = O]
						= \mathbb{E}_{\overline{G}}[T(\overline{G}[z])],
				\end{align*}
				where the second equality follows from $\overline{G}Y \mid O_Y = O \overset{d}{=} \overline{G}[z] \sim \textnormal{Unif}(O)$.
				As a consequence,
				\begin{align*}
					\mathbb{E}_Y[\varepsilon_T(Y) \mid O_Y = O]
						&= \mathbb{E}_Y\left[\frac{T(Y)}{\mathbb{E}_{\overline{G}}[T(\overline{G}Y)]} \middle| O_Y = O\right]
						= \frac{\mathbb{E}_Y[T(Y) \mid O_Y = O]}{\mathbb{E}_{\overline{G}}[T(\overline{G}[z])]} = 1,
				\end{align*}
				where the second equality uses that $y \mapsto \mathbb{E}_{\overline{G}}[T(\overline{G}y)]$ is constant on each orbit.
				Now as $O$ was arbitrarily chosen, this holds for every $O \in \mathcal{A}$, and so $\mathbb{E}_Y[\varepsilon_T(Y) \mid O_Y] = 1$, almost surely.
			\end{proof}
						
		\subsection{Proof of Theorem \ref{thm:monte_carlo}}\label{appn:proof_monte_carlo}
			
			\begin{proof}
				It suffices to show that $\mathbb{E}_{\overline{G}^{(1)}, \dots, \overline{G}^{(M)}} \mathbb{E}_{\overline{G}}[\varepsilon_T^M(\overline{G}z)] = 1$, for some $z$ on each orbit $O$.
				The strategy is to first show that the tuple $(T(\overline{G}z)$, $T(\overline{G}^{(1)}z)$, $\dots$, $T(\overline{G}^{(M)}z))$ is exchangeable, and to then apply Proposition \ref{prp:e-value_invariance_through}.
				
				Fix some arbitrary orbit $O \in \mathcal{Y} / \mathcal{G}$ and some element $z \in O$.
				Note that
				\begin{align*}
					\varepsilon_T^M(\overline{G}z) 
						= \frac{T(\overline{G}z)}{\frac{1}{M + 1} \sum_{i = 0}^M T(\overline{G}^{(i)} \overline{G}z)}.
				\end{align*}
				Central to this object is the tuple $(\overline{G}, \overline{G}^{(1)}\overline{G}, \dots, \overline{G}^{(M)}\overline{G})$.
				
				Note that $(\overline{G}^{(1)}\overline{G}, \dots, \overline{G}^{(M)}\overline{G})$ is independent from $\overline{G}$, as
				\begin{align*}
					(\overline{G}^{(1)}\overline{G}, \dots, \overline{G}^{(M)}\overline{G}) \mid (\overline{G} = g)
						= (\overline{G}^{(1)}g, \dots, \overline{G}^{(M)}g)
						\overset{d}{=} (\overline{G}^{(1)}, \dots, \overline{G}^{(M)}).
				\end{align*}
				As the elements of the tuple $(\overline{G}^{(1)}, \dots, \overline{G}^{(M)})$ are mutually independent, we have that the tuple $(\overline{G}, \overline{G}^{(1)}\overline{G}, \dots, \overline{G}^{(M)}\overline{G})$ is mutually independent.
				Moreover, each element is marginally $\textnormal{Unif}(\mathcal{G})$, so that the tuple is i.i.d. and hence exchangeable.

				We now prepare some notation to show that we may apply Proposition \ref{prp:e-value_invariance_through}.
				Let us write $(\overline{G}_*^{(0)}, \dots, \overline{G}_*^{(M)}) = (\overline{G}, \overline{G}^{(1)}\overline{G}, \dots, \overline{G}^{(M)}\overline{G})$, so that
				\begin{align*}
					\mathbb{E}_{\overline{G}^{(1)}, \dots, \overline{G}^{(M)}}\left[\mathbb{E}_{\overline{G}}\left[\frac{T(\overline{G}z)}{\frac{1}{M + 1} \sum_{i = 0}^M T(\overline{G}^{(i)} \overline{G}z)}\right]\right]
						= \mathbb{E}_{\overline{G}_*^{(0)}, \overline{G}_*^{(1)}, \dots, \overline{G}_*^{(M)}}\left[\frac{T(\overline{G}_*^{(0)}z)}{\frac{1}{M + 1} \sum_{i = 0}^M T(\overline{G}_*^{(i)}z)}\right].
				\end{align*}
				Define the tuple
				\begin{align*}
					\mathcal{T} := (T(\overline{G}_*^{(0)}z), T(\overline{G}_*^{(1)}z), \dots, T(\overline{G}_*^{(M)}z)),
				\end{align*}
				which is exchangeable as $(\overline{G}_*^{(0)}, \dots, \overline{G}_*^{(M)})$ is exchangeable.
				Moreover, define the statistic $S$ that returns the first element of such a tuple $\mathcal{T}$.
				Finally, let $\overline{P}$ denote a uniform permutation on tuples of $(M+1)$ elements.
				Then, by Proposition \ref{prp:e-value_invariance_through},
				\begin{align*}
					\mathbb{E}_{\overline{G}_*^{(0)}, \overline{G}_*^{(1)}, \dots, \overline{G}_*^{(M)}}\left[\frac{T(\overline{G}_*^{(0)}z)}{\frac{1}{M + 1} \sum_{i = 0}^M T(\overline{G}_*^{(i)}z)}\right]
						= \mathbb{E}_{\mathcal{T}}\left[\frac{S(\mathcal{T})}{\mathbb{E}_{\overline{P}}[S(\overline{P}\mathcal{T})]}\right]
						= 1.
				\end{align*}
			\end{proof}

		\subsection{Proof of Lemma \ref{lem:U-optimal}} \label{proof:U-optimal}
			
			\begin{proof}
				For every $\varepsilon \in \mathcal{E}_1(\mathbb{P})$,
				\begin{align*}
					\frac{d\mathbb{Q}_a}{d\mathbb{P}}(y) U(\varepsilon_\lambda(y)) - \lambda \varepsilon_\lambda(y)
						\geq \frac{d\mathbb{Q}_a}{d\mathbb{P}}(y)U(\varepsilon(y)) - \lambda \varepsilon(y), \quad \mathbb{P}\textnormal{-a.s.}
				\end{align*}
				Integrating both sides with respect to $\mathbb{P}$ and applying the definition of the RN-derivatives:
				\begin{align*}
					\mathbb{E}^{\mathbb{Q}_a}[U(\varepsilon_\lambda)] - \lambda \mathbb{E}^{\mathbb{P}}[\varepsilon_\lambda]
						\geq \mathbb{E}^{\mathbb{Q}_a}[U(\varepsilon)] - \lambda \mathbb{E}^{\mathbb{P}}[\varepsilon].
				\end{align*}
				Since $\varepsilon_\lambda, \varepsilon \in \mathcal{E}_1(\mathbb{P})$ the second terms cancel.
				This yields $\mathbb{E}^{\mathbb{Q}_a}[U(\varepsilon_\lambda)] \geq \mathbb{E}^{\mathbb{Q}_a}[U(\varepsilon)]$, for all $\varepsilon \in \mathcal{E}_1(\mathbb{P})$ for which the right-hand-side is well defined.
			\end{proof}
				
		\subsection{Proof of Theorem \ref{thm:characterize_optimal}}\label{proof:characterize_optimal}
			\begin{proof}
				The first claim follows from Theorem~\ref{thm:e-values}.
				For the second claim, suppose that $\varepsilon' \in F_+^\mathcal{Y}$ is some other e-value that is valid for $\mathcal{G}$ invariance. 
				Then, for each $O \in \mathcal{Y} / \mathcal{G}$, $\varepsilon'_{|O}$ is also a valid e-value for $\mathrm{Unif}(O)$ by Theorem~\ref{thm:e-values}. 
				Hence, by the assumption on $\varepsilon^*$, 
				\begin{align*}
					K_O\left(\varepsilon_{|O}^*\right) \geq K_O\left(\varepsilon'_{|O}\right), \quad\text{for every } O \in \mathcal{Y} / \mathcal{G}.
				\end{align*}
				Since $\Psi$ is non-decreasing in each of its inputs, it follows that
				\begin{align*}
					K(\varepsilon^*) 
						\equiv \Psi\left(\bigl(K_O\bigl(\varepsilon_{|O}^*\bigr)\bigr)_{O \in \mathcal{Y} / \mathcal{G}}\right)
						\geq \Psi\left(\bigl(K_O\bigl(\varepsilon'_{|O}\bigr)\bigr)_{O \in \mathcal{Y} / \mathcal{G}}\right)
						\equiv K(\varepsilon'),
				\end{align*}
				which proves the second claim.
			\end{proof}

		\subsection{Proof of Lemma \ref{lem:conditional_unconditional_RN}}\label{proof:conditional_unconditional_RN}
			
			\begin{proof}
				We start by showing the existence of the subprobability kernel.
				
				Let $\mu_a := \mathbb{Q}_a \circ \pi^{-1}$.
				Since $\mathbb{Q}_a \ll \overline{\mathbb{Q}}$, we have $\mu_a \ll \overline{\mathbb{Q}} \circ \pi^{-1} = \mu$.
				Let $\mathbb{Q}_a'(\cdot \mid O)$ be a version of the conditional law of $\mathbb{Q}_a$ given $O$, so that $\mathbb{Q}_a(\cdot) = \int \mathbb{Q}_a'(\cdot \mid O) d\mu_a(O)$.
				Define
				\begin{align*}
					\mathbb{Q}_a(\cdot \mid O) := \frac{d\mu_a}{d\mu} \mathbb{Q}_a'(\cdot \mid O).
				\end{align*}
				Then, we indeed have the desired
				\begin{align*}
					\int \mathbb{Q}_a(\cdot \mid O) d\mu(O)
						= \int \frac{d\mu_a}{d\mu} \mathbb{Q}_a'(\cdot \mid O) d\mu(O)
						= \int \mathbb{Q}_a'(\cdot \mid O) d\mu_a(O)
						= \mathbb{Q}_a(\cdot).
				\end{align*}
				Moreover, since $\mathbb{Q}_a \ll \overline{\mathbb{Q}}$ and $\textnormal{Unif}(O)$ is a version of $\overline{\mathbb{Q}}(\cdot \mid O)$, it follows that $\mathbb{Q}_a'(\cdot \mid O) \ll \textnormal{Unif}(O)$ $\mu_a$-almost surely and so $\mathbb{Q}_a(\cdot \mid O) \ll \textnormal{Unif}(O)$ $\mu$-almost surely.
	
				Now, using that $\textnormal{Unif}(O)$ is a version of $\overline{\mathbb{Q}}(\cdot \mid O)$, we have for every event $A$,
				\begin{align*}
					\int 1_A(y) d\mathbb{Q}_a(y)
						&= \int \int 1_A(y) d\mathbb{Q}_a(y \mid O) d\mu(O) \\
						&= \int \int 1_A(y) \frac{d\mathbb{Q}_a(\cdot \mid O)}{d\textnormal{Unif}(O)}(y) d\textnormal{Unif}(O)(y) d\mu(O) \\
						&= \int 1_A(y) \frac{d\mathbb{Q}_a(\cdot \mid O_y)}{d\textnormal{Unif}(O_y)}(y) d\overline{\mathbb{Q}}(y).
				\end{align*}
				Hence, $y \mapsto \frac{d\mathbb{Q}_a(\cdot \mid O_y)}{d\textnormal{Unif}(O_y)}(y)$ is a version of $\frac{d\mathbb{Q}_a}{d\overline{\mathbb{Q}}}$.
			\end{proof}

		\subsection{Proof of Theorem \ref{thm:utility-optimal-invariant}}\label{proof:utility-optimal-invariant}
			
			\begin{proof}
				Fix an orbit $O$.
				By Lemma \ref{lem:conditional_unconditional_RN}, $d\mathbb{Q}_a(\cdot \mid O) / d\textnormal{Unif}(O)$ is a version of $d\mathbb{Q}_a/d\overline{\mathbb{Q}}$ on $O$.
				Hence, assumption (ii) can be written as
				\begin{align*}
					\varepsilon^U(y) \in \textnormal{argmax}_{x \in [0, \infty]} \frac{d\mathbb{Q}_a(\cdot \mid O)}{d\textnormal{Unif}(O)}(y) U(x) - \lambda_O x, \quad \textnormal{Unif}(O)\textnormal{-a.s.}
				\end{align*}
				Combining this with (i) and Lemma \ref{lem:U-optimal}, choosing $\mathbb{P} = \textnormal{Unif}(O)$ and $\mathbb{Q}_a = \mathbb{Q}_a(\cdot \mid O)$, yields that $\varepsilon^U$ is $U$-optimal among e-values that are exact for $\textnormal{Unif}(O)$.
				Applying Theorem \ref{thm:characterize_optimal} upgrades this to $\mathcal{G}$ validity and global $U$-optimality of $\varepsilon^U$.
			\end{proof}
								
		\subsection{Proof of Theorem \ref{thm:infimum_of_martingales}}\label{proof:infimum_of_martingales}
			\begin{proof}
				We start with the right-to-left direction.
				For every $O$, let $(\varepsilon_n^O)_{n \geq 0}$ be a non-negative martingale starting at 1 that bounds $(\varepsilon_n)_{n \geq 0}$ from above, $\textnormal{Unif}(O)$-a.s.
				Then for every stopping time $\tau$,
				\begin{align*}
					\mathbb{E}^{\textnormal{Unif}(O)}[\varepsilon_\tau]
						&\leq  \mathbb{E}^{\textnormal{Unif}(O)}[\varepsilon_\tau^{O}] 
						= 1,
				\end{align*}
				where the second inequality from Doob's optional stopping theorem for non-negative martingales, and the assumption that the martingale starts at 1.
				By Theorem \ref{thm:e-process}, $(\varepsilon_n)_{n \geq 0}$ is anytime valid for $\mathcal{G}$ invariance.
				
				For the converse direction, suppose that $(\varepsilon_n)_{n \geq 0}$ is anytime valid for $\mathcal{G}$ invariance.
				By Theorem \ref{thm:e-process}, for every orbit $O$ and stopping time $\tau$,
				\begin{align*}
					\mathbb{E}^{\textnormal{Unif}(O)}[\varepsilon_\tau] \leq 1.
				\end{align*}
				Fix an orbit $O$ and apply Lemma 6 in \citet{ramdas2022admissible} with the simple hypothesis $\{\textnormal{Unif}(O)\}$.
				This yields the existence of a non-negative $\textnormal{Unif}(O)$-martingale $(\varepsilon_n^O)_{n \geq 0}$ starting at $1$ that bounds $(\varepsilon_n)_{n \geq 0}$ from above $\textnormal{Unif}(O)$-a.s. 
				Since the orbit $O$ was arbitrarily chosen, such a martingale exists for every orbit.
			\end{proof}
		
		\subsection{Proof of Lemma \ref{lem:adapted_distribution}}\label{proof:adapted_distribution}	
			To prove Lemma \ref{lem:adapted_distribution}, we prove a more general result in Proposition \ref{prp:conditional_invariance}.
			Lemma \ref{lem:adapted_distribution} is recovered by choosing $h$ equal to the relevant projection map.
		
			Let $\mathcal{Y}$ be our sample space on which our group $\mathcal{G}$ acts.
			Let $\mathcal{Z}$ be some other space.
			Suppose $h : \mathcal{Y} \to \mathcal{Z}$ is continuous.
			Assume $h$ induces a group action on $\mathcal{Z}$.
			That is, we assume $h(y_1) = h(y_2) \implies h(Gy_1) = h(Gy_2)$, for all $G \in \mathcal{G}$ and $y_1, y_2 \in \mathcal{Y}$.
			This means $h$ is equivariant for this group action on $\mathcal{Z}$: $h(Gy) = Gh(y)$.
			
			Our goal is to characterize the conditional distribution of $Y \mid (O_Y, h(Y))$ by a subgroup of $\mathcal{G}$. 
			We start by characterizing the subgroup, and showing that it is compact.
			
			Let us consider the subset $\mathcal{K}^h$ of $\mathcal{G}$ that stabilizes the statistic $h$ of the data:
			\begin{align*}
				\mathcal{K}^h(y) 
					= \{G \in \mathcal{G} : h(Gy) = h(y)\}
					= \{G \in \mathcal{G} : Gh(y) = h(y)\},
			\end{align*}
			Such a set $\mathcal{K}^h(y)$ is also known as a stabilizer subgroup.
			The fact that it is indeed a subgroup, and crucially its compactness are captured in Lemma \ref{lem:stabilizer}.
			
			\begin{lem}\label{lem:stabilizer}
				$\mathcal{K}^h(y)$ is a compact subgroup of $\mathcal{G}$.
			\end{lem}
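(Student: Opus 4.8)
The plan is to show that $\mathcal{K}^h(y)$ is a subgroup and is closed in $\mathcal{G}$; since $\mathcal{G}$ is compact, a closed subset is automatically compact, and hence a closed subgroup is compact.

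First I would verify the algebraic subgroup axioms. Using the characterization $\mathcal{K}^h(y) = \{G \in \mathcal{G} : Gh(y) = h(y)\}$, the identity $I$ clearly lies in $\mathcal{K}^h(y)$ since $Ih(y) = h(y)$. For closure under composition, if $G_1, G_2 \in \mathcal{K}^h(y)$, then $(G_1 G_2)h(y) = G_1(G_2 h(y)) = G_1 h(y) = h(y)$, so $G_1 G_2 \in \mathcal{K}^h(y)$. For closure under inversion, if $G \in \mathcal{K}^h(y)$ then $Gh(y) = h(y)$, and applying $G^{-1}$ to both sides gives $h(y) = G^{-1}h(y)$, so $G^{-1} \in \mathcal{K}^h(y)$. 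This establishes that $\mathcal{K}^h(y)$ is a subgroup of $\mathcal{G}$.

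Next I would argue closedness, which is the crux of the compactness claim. The idea is that $\mathcal{K}^h(y)$ is the preimage of a single point under a continuous map. Consider the evaluation map $\phi : \mathcal{G} \to \mathcal{Z}$ defined by $\phi(G) = Gh(y)$, where we use the induced group action on $\mathcal{Z}$. The main obstacle is verifying that this map is continuous: this requires the continuity of the group action of $\mathcal{G}$ on $\mathcal{Z}$, which is inherited from the assumed continuity of the action on $\mathcal{Y}$ together with the continuity of $h$ and the well-definedness of the induced action. Once continuity of $\phi$ is in hand, observe that $\mathcal{K}^h(y) = \phi^{-1}(\{h(y)\})$. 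Since $\mathcal{Z}$ is Hausdorff (by the standing assumption that all spaces are Hausdorff), the singleton $\{h(y)\}$ is closed, so its preimage $\mathcal{K}^h(y)$ is a closed subset of $\mathcal{G}$.

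Finally, I would combine these facts: $\mathcal{K}^h(y)$ is a closed subset of the compact space $\mathcal{G}$, hence compact, and it is also a subgroup, so it is a compact subgroup of $\mathcal{G}$. The result then follows, and in particular $\mathcal{K}^h(y)$ admits its own Haar probability measure $\textnormal{Unif}(\mathcal{K}^h(y))$, which is what is needed downstream in the proof of Lemma \ref{lem:adapted_distribution}. The only subtlety worth care is ensuring the induced action on $\mathcal{Z}$ is genuinely continuous; if one prefers to avoid this, an alternative is to express $\mathcal{K}^h(y)$ directly as $\{G \in \mathcal{G} : h(Gy) = h(y)\}$, which is the preimage of the closed diagonal-type set under the continuous map $G \mapsto (h(Gy), h(y))$, again yielding closedness without explicitly invoking continuity of the induced action.
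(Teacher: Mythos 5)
Your proof is correct and follows essentially the same route as the paper: verify the subgroup axioms directly, then obtain closedness by writing $\mathcal{K}^h(y)$ as the preimage of the closed singleton $\{h(y)\}$ under the continuous map $G \mapsto h(Gy)$ (the "alternative" you mention at the end is precisely the paper's argument, which sidesteps any worry about continuity of the induced action on $\mathcal{Z}$), and conclude compactness from closedness inside the compact group $\mathcal{G}$.
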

			\begin{proof}
				We start by showing that $\mathcal{K}^h(y)$ is a subgroup.
				First, the identity $I$ is trivially in $\mathcal{K}^h(y)$.
				For any $K_1, K_2 \in \mathcal{K}^h(y)$, it is closed under composition: $K_1K_2h(y) = K_1 h(y) = h(y)$.
				Moreover, for any $K \in \mathcal{K}^h(y)$, it contains its inverse $K^{-1}$: $h(y) = Ih(y) = K^{-1}Kh(y) = K^{-1}h(y)$.
				
				Next, we show that $\mathcal{K}^h(y)$ is topologically closed.
				Define the map $f_y : \mathcal{G} \to \mathcal{Z}$ as the composition between $h$ and the group action: $f_y(G) = h(Gy)$.
				As both $h$ and the group action are continuous, their composition $f_y$ is also continuous.
				Since we latently assume any space we consider is Hausdorff, $\mathcal{Z}$ is also a $T_1$ space, so that $\{h(y)\}$ is closed.
				Hence, $\mathcal{K}^h(y)$ is the pre-image of the closed set $\{h(y)\}$ under a continuous map, and so $\mathcal{K}^h(y)$ is also closed.
				As $\mathcal{K}^h(y)$ is a closed subset of the compact set $\mathcal{G}$, it is also compact.
			\end{proof}
	
			In Proposition \ref{prp:conditional_invariance}, we use this subgroup to characterize the conditional distribution $Y \mid (h(Y), O_Y)$.
			
			\begin{prp}\label{prp:conditional_invariance}
				Let $Y$ be $\mathcal{G}$ invariant and $h : \mathcal{Y} \to \mathcal{Z}$ be $\mathcal{G}$ equivariant.
				For any orbit $O \in \mathcal{Y} / \mathcal{G}$ and $z \in h(O)$, pick $x \in O$ with $h(x) = z$.
				Let $\overline{K}^h \sim \textnormal{Unif}(\mathcal{K}^h(x))$, independent of $Y$.
				Then,
				\begin{align*}
					Y \mid (O_Y = O, h(Y) = z) \overset{d}{=} \overline{K}^hx.
				\end{align*}
			\end{prp}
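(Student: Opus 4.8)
The plan is to reduce the two-fold conditioning to a statement about the Haar measure on the stabilizer subgroup $\mathcal{K}^h(x)$, which is a compact group by Lemma~\ref{lem:stabilizer} and hence carries a well-defined uniform law $\textnormal{Unif}(\mathcal{K}^h(x))$. First I would condition on the orbit alone: since $Y$ is $\mathcal{G}$ invariant, Lemma~\ref{lem:invariance_random_variable} gives $Y \mid (O_Y = O) \sim \textnormal{Unif}(O)$, and for the fixed $x \in O$ this uniform law is exactly the law of $\overline{G}x$ with $\overline{G} \sim \textnormal{Unif}(\mathcal{G})$. By $\mathcal{G}$-equivariance of $h$ we have $h(\overline{G}x) = \overline{G}h(x) = \overline{G}z$, so under this representation the remaining conditioning event $\{h(Y) = z\}$ becomes $\{\overline{G}z = z\} = \{\overline{G} \in \mathcal{K}^h(x)\}$. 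Note also that $\mathcal{K}^h(x) = \{G : Gh(x) = h(x)\}$ depends on $x$ only through $z = h(x)$, so I may write $\mathcal{K}_z := \mathcal{K}^h(x)$. It then suffices to show that conditioning $\overline{G}x$ on $\overline{G} \in \mathcal{K}_z$ yields the law of $\overline{K}^h x$.

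The core of the argument is a direct verification that the kernel $w \mapsto \textnormal{Law}(\overline{K}^h_w x_w)$, where $x_w \in O$ is any point with $h(x_w) = w$ and $\overline{K}^h_w \sim \textnormal{Unif}(\mathcal{K}_w)$, is a version of the regular conditional distribution of $Y$ given $h(Y)$ restricted to $O$. For bounded measurable $f$ on $\mathcal{Y}$ and $\psi$ on $\mathcal{Z}$, the left-hand test statistic is $\mathbb{E}_{\overline{G}}[f(\overline{G}x)\,\psi(\overline{G}z)]$. The key observation is that, by right-invariance of the Haar measure on $\mathcal{G}$ together with $Kz = z$ for every $K \in \mathcal{K}_z$,
\[
	\mathbb{E}_{\overline{G}}\big[f(\overline{G}x)\,\psi(\overline{G}z)\big]
	= \mathbb{E}_{\overline{G}}\big[f(\overline{G}Kx)\,\psi(\overline{G}z)\big],
	\qquad K \in \mathcal{K}_z,
\]
since replacing $\overline{G}$ by $\overline{G}K$ leaves its law unchanged while $\psi(\overline{G}Kz) = \psi(\overline{G}z)$. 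Averaging this identity over $K = \overline{K}^h \sim \textnormal{Unif}(\mathcal{K}_z)$ inserts the inner average $\mathbb{E}_{\overline{K}^h}[f(\overline{G}\,\overline{K}^h x)]$ without changing the value, and this inner average is well-defined as a function of $w = \overline{G}z$ by left-invariance of Haar on $\mathcal{K}_z$; identifying it with $\Phi(w) := \mathbb{E}[f(\overline{K}^h_w x_w)]$ (legitimate because the law of $\overline{K}^h_w x_w$ is independent of the representative $x_w$, again by right-invariance of Haar on $\mathcal{K}_w$) yields $\mathbb{E}^{\textnormal{Unif}(O)}[f(Y)\psi(h(Y))] = \mathbb{E}^{\textnormal{Unif}(O)}[\Phi(h(Y))\psi(h(Y))]$, which is precisely the defining property of the conditional law. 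Evaluating at $w = z$ gives the claim.

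I expect the main obstacle to be rigor around the conditioning itself: the event $\{h(Y) = z\}$ is typically $\textnormal{Unif}(\mathcal{G})$-null, so the conditional distribution must be read as a regular conditional distribution, determined only up to $\mathbb{P}_{h(Y)}$-almost sure equivalence (as already flagged for condition~5 of Lemma~\ref{lem:invariance_random_variable}). The test-function computation above sidesteps explicit division by a null probability, but to conclude that the candidate kernel genuinely \emph{is} the conditional distribution one still needs its measurability in $w$, which amounts to a measurable selection of representatives $x_w$ and measurability of the assignment $w \mapsto \textnormal{Unif}(\mathcal{K}_w)$. This is exactly the disintegration of Haar measure over the homogeneous space $\mathcal{G}/\mathcal{K}_z \cong \{Gz : G \in \mathcal{G}\} = h(O)$ along the equivariant orbit map $G \mapsto Gz$, whose fibre over the base point is $\mathcal{K}_z$ with conditional measure $\textnormal{Unif}(\mathcal{K}_z)$; I would either invoke this Weil-type disintegration directly or cite the random-measure machinery of \cite{kallenberg2017random} (Theorem 7.14), which is the same tool already used to construct the inversion kernel. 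The essential-uniqueness of regular conditional distributions then upgrades the a.s. identity to the stated equality in distribution for the given $z$.
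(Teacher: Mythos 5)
Your proposal is correct, and it establishes the result by a more explicit route than the paper does. The paper's proof is three lines: it first verifies the set identity $\mathcal{K}^h(x)x = \{Gx : G \in \mathcal{G},\ h(Gx) = z\} = \{y \in O : h(y) = z\}$, i.e.\ the fibre of $h$ inside the orbit is exactly the $\mathcal{K}^h(x)$-orbit of $x$; it then notes that $\mathcal{G}$ invariance of $Y$ implies invariance under the subgroup $\mathcal{K}^h(x)$, so that conditioning on this subgroup-orbit leaves $Y$ uniform on it (condition 5 of Lemma \ref{lem:invariance_random_variable} applied to the subgroup), which is the claim. You instead represent $Y \mid (O_Y = O)$ as $\overline{G}x$ with $\overline{G} \sim \textnormal{Unif}(\mathcal{G})$ and disintegrate the Haar measure along $G \mapsto Gz$, using right-invariance ($\overline{G}K \overset{d}{=} \overline{G}$ for $K \in \mathcal{K}_z$) and a test-function computation to exhibit $w \mapsto \textnormal{Law}(\overline{K}^h_w x_w)$ as a version of the regular conditional distribution. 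Both arguments hinge on the same structural fact --- the fibre is the stabilizer orbit, and uniformity is inherited by the subgroup --- but yours makes explicit the two points the paper's proof elides: that $\{h(Y) = z\}$ is typically a null event so the statement must be read through a regular conditional distribution chosen as a measurable kernel, and that this requires a measurable selection of representatives (the Weil-type disintegration, or Theorem 7.14 of \cite{kallenberg2017random}). The paper's shortcut buys brevity; your version buys rigor at the cost of invoking the random-measure machinery. No gaps.
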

			\begin{proof}
				We start by characterizing the orbit of $x$ under $\mathcal{K}^h(x)$,
				\begin{align*}
					\mathcal{K}^h(x) x := \{Kx : K \in \mathcal{K}^h(x)\} = \{Gx : G \in \mathcal{G} ,h(Gx)=z\}  = \{y \in O : h(y) = z\}.
				\end{align*}	
				Hence, conditioning on $(O_Y, h(Y)) = (O, z)$ confines $Y$ to this orbit.
				Now, as $Y$ is $\mathcal{G}$ invariant, it is also invariant under any subgroup, including $\mathcal{K}^h(x)$.
				Moreover, for any $K \in \mathcal{K}^h(x)$ we have $O_{KY} = O_Y$ and $h(KY) = h(Y)$, so the event $\{O_Y = O, h(Y) = z\}$ is $\mathcal{K}^h(x)$ invariant.
				As a consequence, $Y \mid (O_Y = O, h(Y) = z)$ is $\mathcal{K}^h(x)$ invariant on $\{y \in O : h(y) = z\}$ and so $Y \mid (O_Y = O, h(Y) = z) \overset{d}{=} \overline{K}^hx$.
			\end{proof}

			\stitle{}
			\sdescription{}
		
	\section{Full table hot hand application}\label{appn:hot_hand}
		\begin{table}[!htb]
			\centering
			\begin{tabular}{lrrrrrr}
			\toprule
			\multicolumn{1}{c}{Trigger} & \multicolumn{2}{c}{1 hit} & \multicolumn{2}{c}{2 hits} & \multicolumn{2}{c}{3 hits} \\
			\cmidrule(l{3pt}r{3pt}){2-3} \cmidrule(l{3pt}r{3pt}){4-5} \cmidrule(l{3pt}r{3pt}){6-7}
			\multicolumn{1}{c}{ } & \multicolumn{2}{c}{$\beta$} & \multicolumn{2}{c}{$\beta$} & \multicolumn{2}{c}{$\beta$} \\
			\cmidrule(l{3pt}r{3pt}){2-3} \cmidrule(l{3pt}r{3pt}){4-5} \cmidrule(l{3pt}r{3pt}){6-7}
			Shooter ID & 0.85 & 0.90 & 0.85 & 0.90 & 0.85 & 0.90\\
			\midrule
			101 & 0.163 & 0.323 & 0.409 & 0.572 & 0.674 & 0.782\\
			102 & 1.040 & 1.089 & 0.732 & 0.832 & 0.758 & 0.838\\
			103 & 2.737 & 2.068 & 1.582 & 1.414 & 1.316 & 1.232\\
			104 & 0.949 & 1.025 & 0.627 & 0.753 & 0.998 & 1.004\\
			105 & 0.647 & 0.804 & 0.990 & 1.018 & 0.898 & 0.941\\
			\addlinespace
			106 & 4.695 & 2.962 & 2.543 & 1.934 & 2.356 & 1.807\\
			107 & 5.765 & 3.346 & 3.105 & 2.184 & 2.230 & 1.732\\
			108 & 1.040 & 1.065 & 1.675 & 1.426 & 1.284 & 1.191\\
			109 & 2.338 & 1.840 & 3.100 & 2.176 & 3.181 & 2.195\\
			110 & 0.382 & 0.565 & 0.675 & 0.799 & 0.735 & 0.834\\
			\addlinespace
			111 & 1.318 & 1.284 & 1.529 & 1.378 & 1.409 & 1.286\\
			112 & 0.490 & 0.667 & 0.621 & 0.751 & 0.849 & 0.907\\
			113 & 0.242 & 0.418 & 0.391 & 0.559 & 0.509 & 0.655\\
			114 & 1.427 & 1.358 & 1.187 & 1.167 & 1.169 & 1.136\\
			201 & 0.613 & 0.779 & 0.924 & 0.979 & 0.764 & 0.850\\
			\addlinespace
			202 & 1.938 & 1.636 & 1.090 & 1.085 & 1.099 & 1.073\\
			203 & 3.076 & 2.227 & 1.156 & 1.135 & 1.201 & 1.142\\
			204 & 0.548 & 0.711 & 0.909 & 0.954 & 0.971 & 0.986\\
			205 & 0.441 & 0.616 & 1.001 & 1.018 & 0.725 & 0.816\\
			206 & 0.323 & 0.510 & 0.758 & 0.855 & 0.734 & 0.825\\
			\addlinespace
			207 & 2.503 & 1.950 & 4.173 & 2.636 & 2.405 & 1.815\\
			208 & 0.233 & 0.409 & 0.679 & 0.798 & 1.279 & 1.192\\
			209 & 0.428 & 0.612 & 1.053 & 1.062 & 1.109 & 1.084\\
			210 & 0.306 & 0.487 & 1.330 & 1.234 & 1.375 & 1.251\\
			211 & 0.422 & 0.602 & 0.423 & 0.587 & 0.453 & 0.603\\
			\addlinespace
			212 & 0.452 & 0.620 & 0.643 & 0.755 & 1.000 & 1.000\\
			\addlinespace
			Product e-value & 0.007 & 0.180 & 3.108 & 4.460 & 7.489 & 5.525\\
			\bottomrule
			\end{tabular}
			\caption{Log-optimal e-values for each shooter in the controlled shooting experiment of \citet{gilovich1985hot} for exchangeability against several hot hand alternatives, triggering after 1-3 hits for a modest effect $(\beta = 0.85)$ and weak effect $(\beta = 0.9)$. The final row reports the product e-value of the corresponding column.}
			\label{tab:hot_hand_full}
		\end{table}
			
\end{document}